\documentclass[12pt]{article}

\usepackage{times}
\usepackage[centertags]{amsmath}
\usepackage{amssymb}
\usepackage{amsthm}
\usepackage[utf8]{inputenc}
\usepackage[T1]{fontenc}
\usepackage[a4paper, top=5cm, bottom=5cm, left=3.5cm, right=2.5cm]{geometry}
\usepackage[italian,english]{babel}
\usepackage{float}
\usepackage{graphicx}
\usepackage{tikz}
\usepackage{qtree}
\usepackage{tikz-qtree}
\usepackage[lined,algonl,boxed]{algorithm2e}
\usepackage[colorlinks=true,urlcolor=red,citecolor=red,linkcolor=red]{hyperref}
\usepackage{pgfplots}
\pgfplotsset{compat=1.13}
\usetikzlibrary{trees,positioning,shapes}
\usetikzlibrary{patterns}
\usepackage{verbatim}
%\bibliography{arft.bib}
\newtheorem{teo}{Theorem}
\newtheorem{defi}[teo]{Definition}
\newtheorem{prop}[teo]{Proposition}
\newtheorem{lem}[teo]{Lemma}
\newtheorem{cor}[teo]{Corollary}
\theoremstyle{definition}
\newtheorem{oss}[teo]{Remark}
\newtheorem{conj}[teo]{Conjecture}
\theoremstyle{definition}
\newtheorem{ex}[teo]{Example}

\newcommand{\bs}{\boldsymbol}
% Insiemi numerici
\newcommand{\N}{\mathbb{N}}
\newcommand{\Z}{\mathbb{Z}}

\newcommand{\K}{\mathbb{K}}

%Spazi affini e proiettivi

%Completamento

%Topologia -adica

%Semigruppo generato
\newcommand{\sgen}[1]{\langle #1 \rangle_{\oplus}}
%Semigruppo succ.ricorrenza

\topmargin 0.0cm
\oddsidemargin 0.2cm
\textwidth 16cm 
\textheight 21cm
\footskip 1.0cm
\newenvironment{acknowledgements}%
{\null\vfill\begin{center}%
		\bfseries Acknowledgements\end{center}}%
{\vfill\null}
\setlength\parindent{0pt}
%red H
\newcommand{\red}[1]{\langle\langle #1 \rangle\rangle}

\newcounter{lastnote}

\title{Embedding dimension of a good semigroup} 
\author{N. Maugeri, G. Zito}
\date{}

\begin{document}
	\linespread{1,0}
	
	\maketitle
	\begin{abstract}
		In this paper, we study good semigroups of $\N^n$, a class of semigroups that contains the value semigroups of algebroid curves with $n$ branches. We give the definition of embedding dimension of a good semigroup showing that, in the case of good semigroups of $\N^2$, some of its properties agree with the analogue concepts defined for numerical semigroups.

	\end{abstract}
	\section*{Introduction}

The concept of good subsemigroup of $\mathbb{N}^n$ was formally introduced in \cite{anal:unr}. Its definition arises from the properties of the value semigroups of one dimensional analytically unramified rings (for example the local rings of an algebraic curve) that  were initially studied in \cite{two:danna, gener:cdgz, symm:cdk, canonical:danna, multibranch:delgado, symm:delgado, semig:garcia}.
In \cite{anal:unr}, the authors proved that the class of good semigroups is actually larger than the one of value semigroups. Thus, such semigroups can be seen as a natural generalization of numerical semigroups and studied without necessarily referring to the ring theory context, using a more combinatorial approach.

Although, as we have already pointed out, good semigroups share  traits with the numerical semigroups, there are some important properties of the latter that cannot be generalized to them. For instance, they are not finitely generated as monoids, and they are not closed under finite intersections.
This makes the study of good semigroups much more difficult than the numerical ones.

Thus, a relevant part of the literature dedicated to these objects is concerned to find a suitable way to represent them by means of a finite set of data. 

For instance, for what concerns good semigroups which are also value semigroups, in \cite{semig:garcia, gruppe:waldi} singularities with only two branches are studied. In these papers, the finite set considered is the set of maximal elements (in \cite{multibranch:delgado}, it is possible to find a generalization of this approach to the case of more than two branches). 
In \cite{good:danna}, the authors considered a new approach that is still valid for  good semigroups not realizable as value semigroups of curves.
They firstly notice that the set of \emph{small elements} of the semigroup, that is, the finite set of elements between $0$ and the conductor of the semigroup with the usual partial order, completely describes it. Then they proved the uniqueness of the minimal subset $G \subsetneq\mathrm{Small}(S)$, called  \emph{minimal good generating system}, from which is possible to recover completely the semigroup $S$, if also the conductor of $S$ is known. 
Another interesting approach is the one presented in \cite{Carvalho:Semiring}, where the authors introduced the  semiring of values $\Gamma$ of an algebroid curve $R$ where also the values of the zero-divisors elements are considered ($v(0)=(\infty,\ldots,\infty$)). 
Thus $\Gamma$ contains the value semigroup of $R$ and $(\Gamma,+)$ is a semigroup setting $\gamma +
\underline{\infty}=\underline{\infty}$ for all $\gamma\in\Gamma$.
The key point is that $\Gamma$, equipped with the
tropical operations
$$ \bs{\alpha} \oplus \bs{\beta}=\text{min}\{\bs{\alpha},\bs{\beta}\}:=(\text{min}\{\alpha_1,\beta_1\},\ldots ,\text{min}\{\alpha_n,\beta_n\})\ \ \
\mbox{and} \ \ \ \bs{\alpha}\odot\bs{\beta}=\bs{\alpha}+\bs{\beta},$$ 
is a {\it finitely generated semiring}. This leads  the authors to introduce the concept of minimal standard basis.

The aim of this paper is to continue this kind of investigation, in order to find the smallest possible finite set that is able to encode some of the information of a good semigroup with two branches. Specifically, we introduce the concept of minimal set of representatives of a good subsemigroup $S$ of $\mathbb{N}^2$. Although a minimal set of representatives $\eta$ of $S$ does not univocally describe the semigroup (however $S$ is still among the minimal good semigroups containing $\eta$), it is possible to show that it stores relevant data. For instance, in the case of value semigroup, a system of representatives contains all the information regarding the value of a minimal system of generators of the corresponding ring. This leads us to generalize in a reasonable way, to the good semigroups of $\mathbb{N}^2$, the  concept of \textit{embedding dimension} that plays an important role in the numerical case.

The structure of the paper is the following.

In Section \ref{section1} we give all the basic definitions and we introduce all the  main tools of the paper. In particular, in Subsection \ref{section11} we recall the definition of good semigroup and we explain how to associate to a good semigroup $S$ of $\mathbb{N}^2$ 
a semiring $\Gamma_S$.
Then, in Proposition \ref{E2bis}, we prove that, in the case of value semigroups, our semiring coincides with the one given in \cite{Carvalho:Semiring}.
In Subsection \ref{section12} we define the concept of \textit{irreducible} and \textit{absolute element} of $\Gamma_S$, and in Theorem \ref{E7}, we prove that $\Gamma_S$ is generated as a semiring by its set $I_A$ of irreducible absolute elements generalizing to all good semigroups a result proved by Carvalho E. and Hernandes M.E. \cite[Thm 11, Cor 20]{Carvalho:Semiring} for the value semigroups of a ring. 

In Section \ref{section2}, we introduce the notation $S_{\eta}$ for the set of  the minimal good semigroups containing $\eta$. In Proposition \ref{boundcond} we give some conditions on $\eta$ in order to have finitely many elements in $S_{\eta}$.
Then, given a good semigroup $S$, a set $\eta$ is called a \textit{ system of representatives} of $S$ if $S \in S_{\eta}$.
This lets us to define the embedding dimension of a good semigroup $S$ as the smallest cardinality of a system of representatives of $S$.
Starting from this point we work on good semigroups of $\N^2$ in order to study the property of the embedding dimension.
In Subsection \ref{section21} we introduce the definition of \emph{track} of a good semigroup $S$ and with Lemma \ref{semcont} we show how to obtain a good semigroup $S'$ contained in $S$ by removing one of its tracks. Using this lemma we can compute an inferior bound for the embedding dimension. In Subsection \ref{section22} it is given the definition of reducibility of an element of $I_A(S)$ with respect to a subset $\eta \subseteq I_A(S)$.  Then,  Theorem \ref{E21} gives a way to use this concept in order to develop a strategy to find a superior bound for the embedding dimension.
In Subsection \ref{section23} we present a series of functions implemented in GAP \cite{GAP4} that, using the computational vantages of calculating the previous bounds, allow us to describe a fast algorithm to find the embedding dimension.
In the examples proposed in this section, for reasons of legibility and space, some verifications are not reported; these were made using functions written in GAP \cite{GAP4}.

Finally, Section \ref{section3} is dedicated to studying  whether the embedding dimension defined in $\mathbb{N}^2$ retains some of the features of the numerical case.
In particular in Theorem \ref{ring} we prove that a good semigroup $S$, realizable as a value semigroup, has embedding dimension greater or equal than the corresponding ring (as in the numerical case). Then we give some examples, when the previous inequality is strict, where it is possible to observe the limits of the combinatorial structure of a good semigroup that is not always able to store all the information contained in the ring in the same amount of data given by a system of generators.
In Subsection \ref{section32} we give the definition of levels of the Apéry set of a good semigroup as in \cite{DAGuMi}, and we use it to prove that $\operatorname{edim}(S)\leq e_1+e_2$, where $\bs{e}=(e_1,e_2)$ is the multiplicity vector of $S$ (extending the relation $\operatorname{edim}(S)\leq e$ of the numerical case and the corresponding relation for one-dimensional rings). This result also lets us to prove Corollary \ref{arf}, where we show that the Arf good semigroups of $\mathbb{N}^2$ have  maximal embedding dimension, generalizing another important property valid in the numerical case.

\section{Semiring associated to a good semigroup and Irreducible Absolutes}
\label{section1}
\subsection{Semiring $\Gamma_{S}$ and basic properties}
\label{section11}
We start this section recalling the definition of good semigroup introduced in \cite{anal:unr}.
\begin{defi}
\label{E1}
A submonoid $S$ of $(\N^n,+)$ is a \emph{good semigroup} if it satisfies the following properties:
\begin{itemize}
\item[(G1)] If $\bs{\alpha},\bs{\beta}\in S$, then $\min(\bs{\alpha};\bs{\beta}) = (\min\{\alpha_1,\beta_1\},\ldots, \min\{\alpha_n,\beta_n\})\in S$;
\item[(G2)] There exists $\bs{\delta} \in \N^n$ such that $S\supseteq \bs{\delta}+\N^n$;
\item[(G3)] If $(\bs{\alpha},\bs{\beta})\in S$; $\bs{\alpha}\neq \bs{\beta}$ and $\alpha_i=\beta_i$ for some $i\in\{1,\ldots,n\}$; then there exists $\bs{\epsilon} \in S$ such that $\epsilon_i>\alpha_i=\beta_i$ and $\epsilon_j\geq \min\{\alpha_j,\beta_j\}$ for each $j\neq i$ (and if $\alpha_j\neq \beta_j$, the equality holds).
\end{itemize}
\end{defi}
Furthermore, we always suppose to work with a \emph{local} good semigroup $S$, i.e. if $\bs{\alpha}=(\alpha_1,\ldots,\alpha_n)\in S$ and $\alpha_i=0$ for some $i\in\{1,\ldots,n\}$, then $\bs{\alpha}=\bs{0}$.
As a consequence of property (G2), the element $\bs{c}=\min\{\delta| S\supseteq \bs{\delta}+\N^n\}=(c_1,\ldots,c_n)$ is well defined and it is called \emph{conductor} of the good semigroup. We denote by $\leq$, the partial order on the elements of $S$ induced by the standard order on $\N^n$.
Furthermore, we denote by $\bs{e}=\min (S\backslash\{\bs{0}\})$ the \emph{multiplicity vector} of the good semigroup. 
In order to simplify the notation and some proofs, in this paper, we often work with good semigroups $S\subseteq \N^2$ but  most of the definitions and proofs remain true also in the general case.\\
According to the work of Carvalho and Hernandes \cite{Carvalho:Semiring}, we wish to introduce a semiring $\Gamma_S$ associated with the good semigroup $S\subseteq \N^2$.\\
We set $\overline{\N}=\N\cup\{\infty\}$, where $\infty$ is just a symbol that will correspond to the value of the element $0$ if the semigroup is the value semigroup of  a ring. 
We extend the natural order and the sum over $\N$ to $\overline{\N}$, setting respectively, $a<\infty$ for all $a\in \N$ and  $x+\infty=\infty+x=\infty$.\\

We set:
$$S_1^{\infty}=\{(a,\infty)\hspace{0.1cm}|\hspace{0.1cm}\exists \tilde{y}\in \N: (a,y)\in S\hspace{0.2cm} \forall y\geq \tilde{y}\};$$
$$S_2^{\infty}=\{(\infty,b)\hspace{0.1cm}|\hspace{0.1cm}\exists \tilde{x}\in \N: (x,b)\in S\hspace{0.2cm} \forall x\geq \tilde{x}\};$$
$$S^{\infty}=S_1^{\infty}\cup S_2^{\infty}\cup\{(\infty,\infty)\};$$
$$\Gamma_S=S\cup S^{\infty}.$$
If $\bs{\alpha}=(\alpha_1,\alpha_2),$ $\bs{\beta}=(\beta_1,\beta_2)\in \Gamma_S$, we set $\min\{\bs{\alpha},\bs{\beta}\}:=(\min\{\alpha_1,\beta_1\},\min\{\alpha_2,\beta_2\})$.\\
Now we define over $\Gamma_S$ the following tropical operations:
$$\oplus: \bs{\alpha} \oplus \bs{\beta} =\min\{\bs{\alpha},\bs{\beta}\}$$
$$\odot: \bs{\alpha}\odot \bs{\beta}= \bs{\alpha}+\bs{\beta} 
$$
It is easy to prove that, with these operations, $(\Gamma_S,\oplus,\odot)$ is a semiring.\\
We observe that, with the symbols $+$ and $\odot$, we denoted exactly the same operation on $\Gamma_S$. For this reason these two symbols will be used with the same meaning in the following. \\

Now we recall some facts and fix some notations that will be useful for the following.\\ 
Let be $R=\K[\![x_1,\ldots,x_n]\!]/Q$ a two-branches algebroid curve, where $Q=P_1\cap P_2$ is an ideal of $\K[\![x_1,\ldots,x_n]\!]$ such that $P_1$,$P_2$ are prime ideals.\\
We can embed $R\hookrightarrow R_1\times R_2$ where $R_i=\K[\![x_1,\ldots,x_n]\!]/P_i$, $i=1,2$. Furthermore $R\hookrightarrow\overline{R}\cong \overline{R_1}\times \overline{R_2}\cong \K[\![t_1]\!]\times \K[\![t_2]\!]$.
Given $r\in R$, $r=(r_1,r_2)\in \K[\![t_1]\!]\times \K[\![t_2]\!]$ that is a product of DVRs, so we can associate to each element of $R$ a valuation. 
If $v_i$ is the valuation function on $\K[\![t_i]\!]$, we set:
$$v_i(r)=\begin{cases}
v_i(r_i) &\text{ if } r_i\neq 0\\
\infty &\text{ if } r_i=0
\end{cases}
$$
and $v(r)=(v_1(r),v_2(r))$.\\
According to the notation of Carvalho and Hernandes \cite{Carvalho:Semiring}, we introduce the following sets:
$$\Gamma_{S_i}=\{v_i(r)\hspace{0.1cm}|\hspace{0.1cm}r\in R\}\subseteq \overline{\N};$$
$$S_i=\{v_i(r)|r\text{ is not a zerodivisor in } R\}\subseteq \N;$$
$$\Gamma_R=\{\bs{v(r)}:=(v_1(r),v_2(r)), r\in R\}\subseteq \overline{\N}^2;$$
$$S=\{\bs{v(r)}:=(v_1(r),v_2(r))\hspace{0.1cm}|\hspace{0.1cm} r \text{ is not a zerodivisor in } R\}\subseteq \N^2.$$
$\Gamma_R$ and $S$ will be called respectively \emph{semiring of values} and \emph{semigroup of values} associated to $R$.
It is  easy to observe that $S=\Gamma_R\cap \N^2$.\\ \\
At this point, we wish to prove that, if $R$ is a two-branches algebroid curve, and $S$ is its semigroup of values, then $\Gamma_S=\Gamma_R$.\\ 
\begin{lem}
\label{E2} 
The following statements hold:
\begin{itemize}
\item[i)] $(a,\infty)\in \Gamma_S$ if and only if $(a,y)\in S$ for any $y\geq c_2$.
\item[ii)] $(\infty,b)\in \Gamma_S$ if and only if $(x,b)\in S$ for any $x\geq c_1$.
\end{itemize}
\end{lem}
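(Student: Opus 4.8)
The plan is to treat only part i), since ii) follows verbatim after exchanging the two coordinates. The backward implication is immediate: if $(a,y)\in S$ for every $y\geq c_2$, then the defining condition of $S_1^{\infty}$ holds with $\tilde y=c_2$, whence $(a,\infty)\in S_1^{\infty}\subseteq \Gamma_S$.

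For the forward implication I would first observe that, since $(a,\infty)$ has finite first coordinate and infinite second coordinate, the only piece of $\Gamma_S=S\cup S_1^{\infty}\cup S_2^{\infty}\cup\{(\infty,\infty)\}$ it can belong to is $S_1^{\infty}$. Thus $(a,\infty)\in\Gamma_S$ unpacks to: there exists $\tilde y\in\N$ with $(a,y)\in S$ for all $y\geq\tilde y$. The goal is to improve the threshold from this a priori arbitrary $\tilde y$ down to the conductor component $c_2$; if $\tilde y\leq c_2$ there is nothing to prove, so the real content is to \emph{fill the gap} and show $(a,y)\in S$ for every $y$ with $c_2\leq y<\tilde y$.

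The key step is a short case distinction on how $a$ compares to $c_1$, using only (G1) and the conductor property (G2). If $a\geq c_1$, then for any $y\geq c_2$ the point $(a,y)$ lies in $\bs{c}+\N^2\subseteq S$, so we are done directly. If instead $a<c_1$, I would fix $y$ with $c_2\leq y<\tilde y$ and consider the two elements $(c_1,y)$ and $(a,\tilde y)$ of $S$: the first lies above the conductor (since $c_1\geq c_1$ and $y\geq c_2$) and the second is in $S$ by the choice of $\tilde y$. Applying (G1) to these two elements yields $\min\{(c_1,y),(a,\tilde y)\}=(\min\{c_1,a\},\min\{y,\tilde y\})=(a,y)\in S$, precisely because $a<c_1$ and $y<\tilde y$. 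Combining the filled gap with the range $y\geq\tilde y$ already available gives $(a,y)\in S$ for all $y\geq c_2$.

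I do not expect a serious obstacle here: the whole argument rests on realizing that the element $(a,y)$ in the troublesome range is recovered as a coordinatewise minimum of the conductor translate $(c_1,y)$ and the ``tail'' point $(a,\tilde y)$. The only point requiring a little care is not to attempt the minimum trick when $a\geq c_1$, where it would merely return $(c_1,y)$ and fail; this is why the comparison between $a$ and $c_1$ is made explicit and the case $a\geq c_1$ is disposed of separately via (G2). Note that property (G3) plays no role in this lemma.
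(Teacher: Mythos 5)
Your proof is correct and follows essentially the same route as the paper: the forward direction is settled by the same (G1)-minimum trick, recovering $(a,y)$ for $c_2\leq y<\tilde y$ as $\min\{(c_1,y),(a,\tilde y)\}$ with $(c_1,y)\in \bs{c}+\N^2\subseteq S$. Your write-up is in fact slightly more careful than the paper's, which omits the trivial backward implication and only treats the case $a<c_1$ without comment, whereas you dispose of $a\geq c_1$ explicitly via (G2).
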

\begin{proof}
We prove $i)$, the other statement is analogue. If $(a,\infty)\in \Gamma_S$, then there exists $\tilde{y}\in \N$ such that $(a,\tilde{y}), \ldots,$ \newline $ (a,\tilde{y}+ \nolinebreak n)\in S$ for any $n\in \N$.
If $\tilde{y}\leq c_2$ the statement is proved, otherwise $\tilde{y}=c_2+n$, with $n\in \N$. Since $S$ is a good semigroup, for all $i<n$, $a<c_1$, we have that $(a,c_2+i)=\min\{(a,\tilde{y}),(c_1,c_2+i)\}\in S$.
\end{proof}
\begin{prop}
\label{E2bis}
If $R$ is a two-branches algebroid curve and $S$ is its semigroup of values, then $\Gamma_S=\Gamma_R$.
\end{prop}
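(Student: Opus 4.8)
The plan is to reduce the equality $\Gamma_S=\Gamma_R$ to an equivalence about the elements having an infinite coordinate, and then to prove the two inclusions separately. Since it is given that $S=\Gamma_R\cap\N^2$, and since every element of $S^{\infty}$ has at least one coordinate equal to $\infty$, the finite parts agree automatically: $\Gamma_S\cap\N^2=S=\Gamma_R\cap\N^2$. Moreover $(\infty,\infty)=v(0)$ lies in both $\Gamma_R$ and $\Gamma_S$. Hence it suffices to prove, for every $a\in\N$, that $(a,\infty)\in\Gamma_R$ if and only if $(a,\infty)\in\Gamma_S$, together with the symmetric statement for $(\infty,b)$. By Lemma \ref{E2} the condition $(a,\infty)\in\Gamma_S$ is equivalent to $(a,y)\in S$ for all $y\ge c_2$, so this is the formulation I would work with.

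For the inclusion $\Gamma_R\subseteq\Gamma_S$, I would start from $(a,\infty)\in\Gamma_R$, i.e.\ an element $r=(r_1,0)\in R$ with $r_1\neq 0$ and $v_1(r_1)=a$. Fix any $y\ge c_2$ and set $x=\max\{c_1,a+1\}$. Since $\bs{c}+\N^2\subseteq S$ by (G2), we have $(x,y)\in S$, so there is a non-zerodivisor $s=(s_1,s_2)\in R$ with $v_1(s_1)=x>a$ and $v_2(s_2)=y$. Then $r+s=(r_1+s_1,s_2)$ has both components nonzero, hence is a non-zerodivisor, and $v(r+s)=(a,y)$ because $v_1(r_1)=a<x=v_1(s_1)$. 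This shows $(a,y)\in S$ for every $y\ge c_2$, and Lemma \ref{E2} then gives $(a,\infty)\in\Gamma_S$.

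For the inclusion $\Gamma_S\subseteq\Gamma_R$, I would take $(a,\infty)\in\Gamma_S$; by Lemma \ref{E2} we have $(a,c_2)\in S$, so there is a non-zerodivisor $s=(s_1,s_2)\in R$ with $v_1(s_1)=a$ and $v_2(s_2)=c_2$. Here I would invoke the classical description of the conductor ideal $\got{C}=(R:\overline{R})$: being an $\overline{R}$-submodule of $\overline{R}\cong\K[\![t_1]\!]\times\K[\![t_2]\!]$ contained in $R$, it has the product form $\got{C}=t_1^{c_1}\K[\![t_1]\!]\times t_2^{c_2}\K[\![t_2]\!]$, with exponents given by the coordinates of the semigroup conductor $\bs{c}$. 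Since $v_2(s_2)=c_2$, the series $s_2$ lies in $t_2^{c_2}\K[\![t_2]\!]$, whence $(0,s_2)\in\{0\}\times t_2^{c_2}\K[\![t_2]\!]\subseteq\got{C}\subseteq R$. Therefore $(s_1,0)=s-(0,s_2)\in R$, and $v(s_1,0)=(a,\infty)$, i.e.\ $(a,\infty)\in\Gamma_R$. The statements for $(\infty,b)$ follow by exchanging the two branches, and equality of the semiring operations is immediate since both $\oplus$ and $\odot$ are the same tropical operations induced from $\overline{\N}^2$.

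The routine reductions (the finite part, the value $(\infty,\infty)$, and the symmetric branch) are straightforward; the only genuine input is the ring-theoretic fact used in the third paragraph, namely the identification $\got{C}=t_1^{c_1}\K[\![t_1]\!]\times t_2^{c_2}\K[\![t_2]\!]$ relating the conductor ideal to the semigroup conductor. This is exactly where the hypothesis that $S$ is a value semigroup (and not merely an abstract good semigroup) is indispensable, since it is what produces a genuine ring element of value $(a,\infty)$; I expect this identification—of which I really only need the inclusion $\{0\}\times t_2^{c_2}\K[\![t_2]\!]\subseteq R$—to be the main point to pin down, either by citing the standard description of the conductor or by establishing that inclusion directly.
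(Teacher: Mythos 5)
Your proof is correct and takes essentially the same approach as the paper's: both reduce the equality to the elements with an infinite coordinate via $S=\Gamma_R\cap\N^2$, and both use the identification of the conductor ideal $\got{C}=t_1^{c_1}\K[\![t_1]\!]\times t_2^{c_2}\K[\![t_2]\!]$ as the key ring-theoretic input for producing a ring element of value $(a,\infty)$ or $(\infty,b)$. If anything, your write-up is slightly more complete at two points: for $\Gamma_R\subseteq\Gamma_S$ you actually verify the defining condition of $S_1^{\infty}$ by adding a non-zerodivisor of value $(x,y)$ with $x>a$ (the paper only asserts ``$v_1(r)=\infty$, hence $\bs{\alpha}\in S_2^{\infty}$''), and for the reverse inclusion you subtract $(0,s_2)\in\got{C}$ directly, avoiding the paper's unexplained normalization of the element to the form $(t^{c_1},b_y(u))$.
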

\begin{proof}
We have observed that $S=\Gamma_R\cap\N^2$, thus we need to prove that $\Gamma_R\backslash S=S^{\infty}$. If $\bs{\alpha}\in \Gamma_R\backslash S$, we can write $\bs{\alpha}=v(r)$, where $r$ is a zerodivisor in $R$ or $r=0$; in both cases we have $r\in P_1\cup P_2$. If $r=0$, $v(r)=(\infty,\infty)$; if $r\in P_1$, then $r=0$ in $R_1$, $v_1(r)=\infty$, hence $\bs{\alpha}\in S_2^{\infty}$; if $r\in P_2$, then $r=0$ in $R_2$, $v_2(r)=\infty$, hence $\bs{\alpha}\in S_1^{\infty}$. If $\bs{\alpha}\in S^{\infty}$, without loss of generality, we can suppose $\bs{\alpha}\in S_2^{\infty}$, we can write $\bs{\alpha}=(\infty,b)$, and, as a consequence of Lemma \ref{E1}, $(c_1,b)\in S$. Since $S=v(R)$ and the conductor ideal is $\mathcal{C}=(t^{c_1},u^{c_2})(\K[\![t]\!]\times\K[\![u]\!])$, there exists an element in $R$ of the form $(t^{c_1},b_y(u))$ with $v(b_y(u))=b$.
Since the element $(t^{c_1},0)\in R$, we have that the element $(0,b_y(u))\in R$, thus $(\infty,b)\in \Gamma_R$.
\end{proof}
\subsection{A system of generators of $\Gamma_S$ as a semiring}
\label{section12}
\begin{defi}
\label{E3}
We will say that an element $\bs{\alpha}\in \Gamma_S\setminus\{\bs{0}\}$ is irreducible if, from $\bs{\alpha}=\bs{\beta}+\bs{\gamma}$, with $\bs{\beta},\bs{\gamma}\in \Gamma_S$, it follows $\bs{\alpha}=\bs{\beta}$ or $\bs{\alpha}=\bs{\gamma}$. An element that is not irreducible will be said reducible.
\end{defi}
We denote by $I(S)$ the set of irreducible elements of $\Gamma_S$.
\begin{oss}
\label{E4}
We observe that:
\begin{enumerate}
\item If $\bs{\alpha}=(a,b)\in \Gamma_S$ with $a\geq c_1+e_1$ and $b\geq c_2+e_2$, then $\bs{\alpha}$ is reducible.
\item If $\bs{\alpha}=(a,\infty)\in \Gamma_S$ with $a\geq c_1+e_1$, then $\bs{\alpha}$ is reducible.
\item If $\bs{\alpha}=(\infty, b)\in \Gamma_S$ with $b\geq c_2+e_2$, then $\bs{\alpha}$ is reducible.  
\end{enumerate}
\end{oss}
Given a good semigroup $S\subseteq \N^2$, and an element $\bs{\alpha}\in \N^2$, following the notation in \cite{anal:unr}, we set:
\begin{eqnarray*}
	\Delta_i(\bs{\alpha})&:=&\{\bs{\beta}\in \Z^{2}|\alpha_i=\beta_i \text{ and } \alpha_j<\beta_j \text{ for } j\neq i\}\\
	\Delta(\bs{\alpha})&:=&\Delta_1(\bs{\alpha})\cup\Delta_2(\bs{\alpha})\\
	\Delta_i^S(\bs{\alpha})&:=&S\cap \Delta_i(\bs{\alpha})\\
	\Delta^S(\bs{\alpha})&:=&S\cap\Delta(\bs{\alpha}).
\end{eqnarray*}
Furthermore we define:
\begin{eqnarray*} {}_i\Delta(\bs{\alpha})&:=&\{\bs{\beta}\in \Z^{2}|\alpha_i=\beta_i \text{ and } \beta_j<\alpha_j \text{ for } j\neq i\}\\
	_i\Delta^S(\bs{\alpha})&:=&S\cap _i\Delta(\bs{\alpha}).\end{eqnarray*}
	Extending the previous definitions to infinite elements of $\overline{\N}^2$, we set
	\begin{eqnarray*}
	_1\Delta((\alpha_1,\infty))&:=&\{\bs{\beta}\in \Z^{2}|\beta_1=\alpha_1 \}\\
		_2\Delta((\alpha_1,\infty))&:=&\emptyset\\
	_1\Delta((\infty,\alpha_2))&:=&\emptyset\\
		_2\Delta((\infty,\alpha_2))&:=&\{\bs{\beta}\in \Z^{2}|\beta_2=\alpha_2 \}
		\\
	_i\Delta^S(\bs{\alpha})&:=&S\cap {}_i\Delta(\bs{\alpha}).
\end{eqnarray*}
\begin{defi}
\label{E5}
An element $\bs{\alpha}\in \Gamma_S$ will be said absolute in $\Gamma_S$ if $\bs{\alpha}\in S$ and $\Delta^S(\bs{\alpha})=\emptyset$ (finite absolute), or if $\bs{\alpha}\in S^{\infty}$ (infinite absolute).
\end{defi}
\begin{oss}
\label{assirroplus}
We observe that an element $\bs{\alpha}\in \Gamma_S$ is an absolute in $\Gamma_S$ if and only if it is irreducible with respect to the operation $\oplus$. If we suppose that $\bs{\alpha}\in \Gamma_S$ is not an absolute, then $\Delta^S(\bs{\alpha})\neq\emptyset$, hence there exists $\bs{\beta}\in \Delta_i^S(\bs{\alpha})$, with $i\in\{1,2\}$. Therefore, by property (G3) of the good semigroups, there exists $\bs{\gamma}\in \Delta_{3-i}^S(\bs{\alpha})$, hence we would have $\bs{\alpha}=\bs{\beta}\oplus\bs{\gamma}$, which is a contradiction. If we suppose that an element $\bs{\alpha}\in \Gamma_S$ is such that $\bs{\alpha}=\bs{\beta}\oplus\bs{\gamma}$ with $\bs{\beta},\bs{\gamma}\neq \bs{\alpha}$, then $\bs{\alpha}\in S$ and $\Delta^S(\bs{\alpha})\neq \emptyset$.
\end{oss}
We denote by $A_f(\Gamma_S)$ the set of \emph{finite absolutes} in $\Gamma_S$, by $A^{\infty}(\Gamma_S)$ the set of \emph{infinite absolutes} in $\Gamma_S$ and by $A(\Gamma_S)$ the set of all \emph{absolutes} in $\Gamma_S$.
We call $I_{A_f}(\Gamma_S)$ the set of \emph{finite irreducible absolutes} in $\Gamma_S$, $I_{A}^{\infty}(\Gamma_S)$ the set of \emph{infinite irreducible absolutes} in $\Gamma_S$ and $I_A(\Gamma_S)$ the set of all \emph{irreducible absolutes} in $\Gamma_S$.
\begin{oss}
By Remark \ref{assirroplus},  $I_A(S)$ can be seen as the set
of the elements of $\Gamma_S$ that are irreducible with respect to both the operations defined in it. Notice that this interpretation lets us to naturally generalize the concept of irreducible absolute elements to good subsemigroups of $\mathbb{N}^n$, with $n>2$.
\end{oss}
As a consequence of the Remark \ref{E4}, the set of irreducible absolutes is finite.
Now we introduce other sets that will be considered in the following:
\begin{eqnarray*}
\operatorname{small}(S)=&\{(a,b)\in S|a\leq c_1, b\leq c_2\};\\
\operatorname{small}(\Gamma_S)=&\operatorname{small}(S)\cup\{(\infty,b)\in S_2^{\infty},b\leq c_2\}\cup\{(a,\infty)\in S_1^{\infty},a\leq c_1\};\\
B_1^{\infty}(\Gamma_S)=&\{(a,\infty)\in \Gamma_S|c_1< a\leq c_1+e_1\}\subseteq S_1^{\infty};\\
B_2^{\infty}(\Gamma_S)=&\{(\infty,b)\in \Gamma_S|c_2< b\leq c_2+e_2\}\subseteq S_2^{\infty};\\
B^{\infty}(\Gamma_S)=&B_1^{\infty}(\Gamma_S)\cup B_2^{\infty}(\Gamma_S)\subseteq S^{\infty}(C).
\end{eqnarray*}

The sets $\operatorname{small}(S)$, $\operatorname{small}(\Gamma_S)$, $B^{\infty}(\Gamma_S)$ will be said respectively: \emph{small elements} of $S$, \emph{small elements} of $\Gamma_S$ and \emph{beyond elements} of $\Gamma_S$.

	\begin{figure}[h]
		\begin{center}
	\tikzset{mark size=1}
	\pgfplotsset{ticks=none}
	\begin{tikzpicture}[scale=1.0]
	\begin{axis}[xmin=0, ymin=0, xmax=20, ymax=20]
	\addplot[very thick]coordinates{(11,0) (11,20)}; \addplot[very thick]coordinates{(0,11) (20,11)};
	\addplot[very thick]coordinates{(11,16) (20,16)};
	\addplot[very thick]coordinates{(16,11) (16,20)};
	\addplot [pattern = north east lines, draw=white]coordinates{(16,16)(20,16)(20,20)(16,20)(16,16)};
	\addplot [pattern = north east lines, draw=white]coordinates{(11,16)(16,16)(16,20)(11,20)(11,16)};
	\addplot [pattern = north east lines, draw=white]coordinates{(16,11)(20,11)(20,16)(16,16)(16,11)};
	\addplot[only marks] coordinates{(11,11) (16,16)}; 
	\node[text width=0cm, font=\tiny] at (14,15.5) 
	{$\bs{c+e}$};
	\node[text width=0cm, font=\tiny] at (10.5,10.5) 
	{$\bs{c}$};
	\node[text width=3cm, font=\footnotesize] at (8,5.5) 
	{$small(S)$};
	\node[text width=3cm, font=\footnotesize] at (16.2,5.5) 
	{$S_1^{\infty}\cap small(\Gamma_S)$};
	\node[text width=3cm, font=\footnotesize] at (6,15.5) 
	{$S_2^{\infty}\cap small(\Gamma_S)$};
	\node[text width=3cm, font=\footnotesize] at (17,13.5) 
	{$B(\Gamma_S)$};
	\end{axis}
	\end{tikzpicture}
	\caption{A graphic representation of $\Gamma_S$'s elements}
\end{center}
\end{figure}
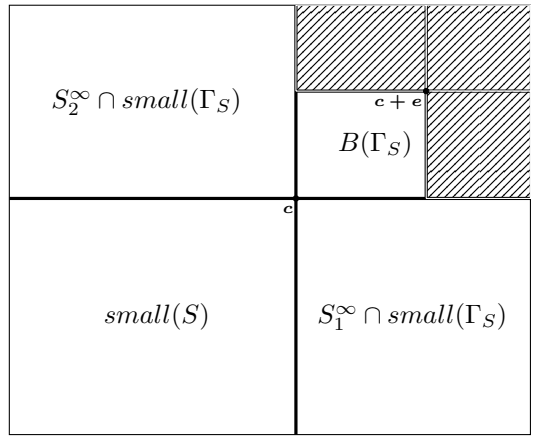
\label{Figure 1}

\begin{oss}
\label{E6} It is easy to observe the following facts:

\begin{enumerate}
\item[i)] Each element in the semiring can be written as a tropical product of irreducible elements, i.e. if $\bs{\alpha}\in \Gamma_S$, $\bs{\alpha}=\bs{\alpha_1}\odot\ldots\odot\bs{\alpha_n}$ where $\bs{\alpha_i}\in I(\Gamma_S)$.
\item[ii)] Each element in the semiring can be written as a tropical sum of two absolute elements, i.e. if $\bs{\beta}\in \Gamma_S$, $\bs{\beta}=\bs{\beta_1}\oplus\bs{\beta_2}$ where $\bs{\beta_1},\bs{\beta_2}\in A(\Gamma_S)$.
\end{enumerate}
\end{oss}

Now we prove that the set of irreducible absolutes generates $\Gamma_S$ as  a semiring.

\begin{teo}
\label{E7}
$(\Gamma_S,\oplus,\odot)$ is generated as a semiring by the irreducible absolutes, i.e. if $\bs{\alpha}\in \Gamma_S\setminus\{\bs{0}\}$, $$\bs{\alpha}= \bigoplus_{i=1}^m(\bigodot_{j=1}^n\bs{\gamma_{j_i}}),$$ with $\bs{\gamma_{j_i}}\in I_A(S)$.
\end{teo}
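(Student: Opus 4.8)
The plan is to prove the statement by reducing it, via the two decompositions recorded in Remark \ref{E6}, to a single assertion about absolute elements: \emph{every absolute element of $\Gamma_S$ is a $\odot$-product of irreducible absolutes}. Granting this, the theorem follows at once. By Remark \ref{E6}(ii) any $\bs{\alpha}\in\Gamma_S\setminus\{\bs 0\}$ can be written as $\bs{\alpha}=\bs{\beta_1}\oplus\bs{\beta_2}$ with $\bs{\beta_1},\bs{\beta_2}\in A(\Gamma_S)$; writing each $\bs{\beta_i}=\bigodot_j\bs{\gamma_{j_i}}$ with $\bs{\gamma_{j_i}}\in I_A(S)$ gives exactly $\bs{\alpha}=\bigoplus_{i=1}^{2}\bigodot_j\bs{\gamma_{j_i}}$ (with a single $\oplus$-term when $\bs\alpha$ is itself absolute). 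So the whole content lies in the displayed assertion about absolutes, which I would split into the finite and the infinite case.

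For \textbf{finite absolutes} I would induct on $\alpha_1+\alpha_2$. The key point is that a factorization of a finite absolute stays inside the absolutes. If $\bs\alpha=\bs\beta\odot\bs\gamma$ with $\bs\beta,\bs\gamma\neq\bs\alpha$, then, since $\bs\alpha$ is finite, $\bs\beta,\bs\gamma\in S$ are finite and, by locality, have strictly positive coordinates, so their coordinate sums are strictly smaller than that of $\bs\alpha$. Moreover each factor is again absolute: were, say, $\Delta_1^S(\bs\beta)\neq\emptyset$, then for $\bs\beta'\in\Delta_1^S(\bs\beta)$ the element $\bs\beta'+\bs\gamma$ would lie in $\Delta_1^S(\bs\alpha)$, contradicting $\Delta^S(\bs\alpha)=\emptyset$. (Here I also use the consequence of (G3) already exploited in Remark \ref{assirroplus}, namely that $\Delta_1^S(\bs\delta)=\emptyset$ if and only if $\Delta_2^S(\bs\delta)=\emptyset$.) Hence repeatedly factoring a finite absolute terminates in irreducible factors, each of which, being absolute, lies in $I_A(S)$.

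For \textbf{infinite absolutes}, say $\bs\alpha=(a,\infty)\in S_1^\infty$ (the case $S_2^\infty$ is symmetric, while $(\infty,\infty)=(a,\infty)\odot(\infty,b)$ reduces to the two), I would first note, via Lemma \ref{E2} and (G2), that $S_1:=\{a : (a,\infty)\in S_1^\infty\}\cup\{0\}$ is a numerical semigroup: it is closed under $\odot$ and contains every integer $\geq c_1$. I then induct on $a$. If $(a,\infty)$ is irreducible it already lies in $I_A(S)$. Otherwise $(a,\infty)=\bs\delta\odot\bs\epsilon$ with both factors distinct from $(a,\infty)$; since the second coordinate is $\infty$, some factor, say $\bs\delta=(\delta_1,\infty)$, is infinite, and $\delta_1<a$ by locality. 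For the other factor $\bs\epsilon=(\epsilon_1,\epsilon_2)$, with $\delta_1+\epsilon_1=a$ so $\epsilon_1<a$: if $\epsilon_1\in S_1$ I replace $\bs\epsilon$ by $(\epsilon_1,\infty)$, so that $(a,\infty)=(\delta_1,\infty)\odot(\epsilon_1,\infty)$ is a product of two infinite absolutes with first coordinates $<a$, handled by the induction hypothesis. If instead $\epsilon_1\notin S_1$, then $y^{*}:=\max\{y:(\epsilon_1,y)\in S\}$ is finite; since the second coordinate of $\bs\epsilon$ is irrelevant to the product $(\delta_1,\infty)\odot\bs\epsilon=(a,\infty)$, I may take $\bs\epsilon=(\epsilon_1,y^{*})$, which satisfies $\Delta_1^S(\bs\epsilon)=\emptyset$ and hence, by the (G3) consequence, $\Delta_2^S(\bs\epsilon)=\emptyset$; thus $\bs\epsilon$ is a finite absolute, handled by the finite case, while $(\delta_1,\infty)$ is handled by the induction hypothesis.

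I expect the infinite case to be the main obstacle, precisely because a factor of an infinite absolute need not a priori be absolute. The decisive trick is that the value of the finite factor's free (second) coordinate can be pushed up to the top of its slice without changing the product, and that (G3) then automatically upgrades this slice-maximal element to a genuine absolute; this is exactly what keeps the induction within the class of absolutes. The remaining verifications — that $S_1$ is a numerical semigroup, that the factors are strictly smaller so the inductions are well founded, and the bookkeeping needed to merge the two $\oplus$-terms into the stated form — are routine.
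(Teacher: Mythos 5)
Your proposal is correct, but it follows a genuinely different route from the paper's. The paper argues in the opposite order: using Remark \ref{E6}(i) it reduces to the case of a \emph{finite irreducible} element $\bs{\alpha}$ (infinite irreducibles are automatically irreducible absolutes), writes $\bs{\alpha}=\bs{\beta}\oplus\bs{\gamma}$ with $\bs{\beta},\bs{\gamma}$ absolute, factors $\bs{\beta}$ and $\bs{\gamma}$ into irreducibles that need \emph{not} be absolute, and then ``caps'' each factor, replacing $\bs{\beta^{(i)}}$ by $\bs{\beta^{(i)}}\oplus\bs{\gamma}$ and $\bs{\gamma^{(j)}}$ by $\bs{\gamma^{(j)}}\oplus\bs{\beta}$, so that $\bs{\alpha}$ becomes a tropical sum of tropical products of elements strictly smaller than $\bs{\alpha}$; it then recurses on every term, with termination justified by the strict decrease. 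You instead prove the stronger structural statement that every absolute is a $\odot$-product of irreducible absolutes, resting on a closure property the paper never isolates: any factorization in $\Gamma_S$ of a finite absolute $\bs{\alpha}=\bs{\beta}\odot\bs{\gamma}$ has absolute factors, because $\bs{\beta'}\in\Delta_i^S(\bs{\beta})$ would give $\bs{\beta'}+\bs{\gamma}\in\Delta_i^S(\bs{\alpha})$, contradicting $\Delta^S(\bs{\alpha})=\emptyset$; for infinite absolutes you add the slice-maximization trick (raise the finite factor to $(\epsilon_1,y^{*})$, which (G3) upgrades to an absolute without changing the product). A single application of Remark \ref{E6}(ii) then finishes, with only $m\leq 2$ summands. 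What your route buys: it avoids the paper's alternation between the two decompositions and its rather informal ``finitely many iterations'' termination argument, and it yields a sharper intermediate result. What the paper's route buys: it never has to factor infinite absolutes at all, so the most delicate part of your argument --- the finiteness of $y^{*}$ (which needs the min-trick of Lemma \ref{E2}, not just its statement) and the (G3) upgrade --- simply does not arise. Both arguments lean equally on the unproved Remark \ref{E6}(ii) and on the (G3) consequence recorded in Remark \ref{assirroplus}, so neither is more self-contained on that score.
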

\begin{proof}
First of all, we observe that we can reduce to prove the thesis only for the elements $\bs{\alpha}\in \operatorname{small}(\Gamma_S)\cup B(\Gamma_S)$. Indeed, if $\bs{\alpha} \notin \operatorname{small}(\Gamma_S)\cup B(\Gamma_S)$, then there exists $k\in \N$ such that $\bs{\beta}=\bs{\alpha}-k\bs{e}\in \operatorname{small}(\Gamma_S)\cup B(\Gamma_S)$. In this case we would have $\bs{\alpha}=\bs{\beta}\odot k\bs{e}$, where $\bs{\beta} \in \operatorname{small}(\Gamma_S)\cup B_S$ and $\bs{e}$ is trivially irreducible.\\
We can reduce again the proof only for the elements $\bs{\alpha}\in I(\Gamma_S)\cap S$ (finite irreducibles). In fact, if $\bs{\alpha}$ is reducible, by Remark \ref{E6}, we can write $\bs{\alpha}=\bs{\alpha^{(1)}}\odot\ldots\odot\bs{\alpha^{(n)}}$, with $\bs{\alpha^{(i)}}$ irreducibles. Furthermore, we observe that if $\bs{\alpha^{(i)}}\in S^{\infty}$, then $\bs{\alpha^{(i)}}\in I_{A}(\Gamma_S)$; thus we can write:
$$\bs{\alpha}=\bs{\alpha^{(1)}}\odot\ldots\odot\bs{\alpha^{(f)}}\odot\Big(\displaystyle{\odot_{\bs{\gamma}\in I_A(S)}\bs{\gamma}}\Big)$$
where $\bs{\alpha^{(i)}}\in I(\Gamma_S)\cap S$. If we prove the thesis for the elements $\bs{\alpha^{(i)}}$ with $i\in\{1,\ldots,f\}$, using the distributive property of $\odot$ with respect to $\oplus$, the result is true also for $\bs{\alpha}$.
Therefore we can suppose $\bs{\alpha}\in I(\Gamma_S)\cap S$ and prove the thesis. By Remark \ref{E6}, we can write $\bs{\alpha}=\bs{\beta}\oplus\bs{\gamma}$ with $\bs{\beta}=(\beta_1,\beta_2)\in A$, $\bs{\gamma}=(\gamma_1,\gamma_2)\in A$ and we can assume $\beta_1=\alpha_1\leq \gamma_1$ and $\gamma_2=\alpha_2\leq \beta_2$.\\
We consider
\begin{eqnarray*}
\bs{\beta}&=&\bs{\beta^{(1)}}\odot\ldots\odot\bs{\beta^{(n)}},\\
\bs{\gamma}&=&\bs{\gamma^{(1)}}\odot\ldots\odot\bs{\gamma^{(m)}},
\end{eqnarray*}
the decompositions in irreducible elements of $\bs{\beta}$ and $\bs{\gamma}$.
We define $\bs{\beta'^{(i)}}=\bs{\beta^{(i)}}\oplus \bs{\gamma}$, for all $i\in \{1,\ldots,n\}$ and $\bs{\gamma'^{(j)}}=\bs{\gamma^{(j)}}\oplus \bs{\beta}$ for all $j\in \{1,\ldots,m\}$. Defining $\bs{\beta'}=\bs{\beta'^{(1)}}\odot\ldots\odot\bs{\beta'^{(n)}}$, $\bs{\gamma'}=\bs{\gamma'^{(1)}}\odot\ldots\odot\bs{\gamma'^{(m)}}$, it is easy to observe that $\beta'_1=\beta_1$ and $\gamma'_2=\gamma_2$, thus we have $\bs{\alpha}=\bs{\beta'}\oplus\bs{\gamma'}$.\\
We can definitely write: $$\bs{\alpha}=(\bs{\beta'^{(1)}}\odot\ldots\odot\bs{\beta'^{(n)}})\oplus(\bs{\gamma'^{(1)}}\odot\ldots\odot\bs{\gamma'^{(m)}}),$$
where each $\bs{\beta'^{(i)}}$ and $\bs{\gamma'^{(j)}}$ is strictly smaller than $\bs{\alpha}$ (that is $\bs{\gamma'^{(j)}} \leq \bs{\alpha}$ and $\bs{\gamma'^{(j)}}  \neq \bs{\alpha}$). If we express each of these elements as a tropical product of irreducibles, we can write $\bs{\alpha}$ as a tropical sum of tropical products, where all the terms are irreducible and strictly smaller than $\bs{\alpha}$. This means that if we repeat the same argument on each element in this expression, in a finite number of iteration we will surely obtain the required expression.
\end{proof}

\begin{oss}
In the case of good semigroups that are value semigroup of a ring, the theorem above follows by \cite[Thm 11]{Carvalho:Semiring} and \cite[Thm 19]{Carvalho:Semiring}.\\
But we recall that not all good semigroups are value semigroup of a ring (for an example cf.\cite[Example 2.16]{anal:unr}).\\
Thus, the previous theorem generalizes this property to all semirings obtained by semigroups of $\N^2$, also if they are not value semigroup of a ring.
\end{oss}

\section{Embedding dimension of a good semigroup}
\label{section2}
It is a well known fact that every numerical semigroup $S\subseteq \N$ admits a unique minimal system of generators as a monoid and the embedding dimension of the numerical semigroup is defined as the number of these generators. This name follows from the fact that it is equal to the embedding dimension of the monomial curve associated with the numerical semigroup.\\
Now we will define a set of vectors that, although it does not uniquely determine a good semigroup, will allow us to give a definition of embedding dimension of a good semigroup. This embedding dimension, in the case of good semigroup of $\N^2$, will satisfy some of the properties that are valid in the case of numerical semigroups.

Starting from this point, in order to lighten the notations,  when we  consider a good semigroup $S$, we  suppose that it coincides with the semiring $\Gamma_S$, i.e. we treat the infinite elements as elements of $S$.
Given a set of vectors $\eta\subseteq \overline{\N}^n$, we denote by $\sgen{\eta}$ the semiring generated by $\eta$.
Furthermore, given a set of vectors $\eta\subseteq \overline{\N}^n$, we denote by $S_{\eta}$ the family of all the good semigroups  containing $\eta$ and that are minimal with respect to the set inclusion. $S_{\eta}$ can be finite, infinite or empty as in the following example.

\begin{ex}
\label{E9}
Let us consider $\eta=\{[2,2],[3,3]\}\subseteq \N^2$, and suppose that there exists a good semigroup $S\in S_{\eta}$.\\
First of all we prove that, for any $n\in \N\backslash\{1\}$, we have $(n,n)\in S$. In fact, it is easy to observe that each natural number $n\neq 1$ can be written as $n=2\alpha+3\beta$, with $\alpha,\beta\in \N$. Hence we can write $(n,n)=(2\alpha+3\beta,2\alpha+3\beta)=\alpha(2,2)+\beta(3,3)\in S$.\\
We denote by $\bs{c(S)}=(c_1,c_2)$ the conductor of $S$. If $c_1=1$, we have that $(1,2)=\min\{(1,c_2),$ $(2,2)\}\in S$; hence, as a consequence of properties (G1) and (G3) of the good semigroups, either $\bs{c(S)}=(1,2)$ or $S=\N^2$. In both cases, if we consider $S'$ such that $\operatorname{small}(S')=\{(0,0),(2,2)\}$ we have that $S'$ is a good semigroup containing $\eta$ and such that $S'\subset S$; but this contradicts the minimality of $S$. Therefore we have obtained $c_1\neq 1$ and, using the same argument, we can suppose $c_2\neq 1$.\\
If $c_1>1$ and $c_2>1$ we prove that $\bs{c(S)}=(c,c)$, with $c\in \N$. Let us assume by contradiction that $\bs{c(S)}=(c_1,c_2)$ with $c_1<c_2$; in this case, there exists $\bs{\alpha}=(\alpha_1,\alpha_2)$ with $\alpha_1\geq c_1$, $c_1\leq \alpha_2<c_2$ such that $\bs{\alpha}\notin S$. If $\alpha_1\leq \alpha_2$, we would have $\bs{\alpha}=\min\{(\alpha_1,c_2),(\alpha_2,\alpha_2)\}\in S$, hence  we necessarily have $\alpha_1>\alpha_2$. Now we observe that $(c_1,\alpha_2)=\min\{\bs{c(S)},(\alpha_2,\alpha_2)\}\in S$ and by property (G3) of good semigroups applied to $\bs{c(S)}$ and $(c_1,\alpha_2)$, there exists $(x_1,\alpha_2)\in S$ with $x_1>c_1$. If $x_1\geq \alpha_1$, we would have $\bs{\alpha}=\min\{(x_1,\alpha_2),(\alpha_1,c_2)\}\in S$ that is a contradiction. Thus we necessarily have $x_1<\alpha_1$. Now, if we consider $(x_1,\alpha_2), (x_1,c_2)\in S$, using again property (G3), we observe that there exists $(x_2,\alpha_2)\in S$ with $x_2>x_1$. We can repeat this argument until we  find an element $(x_i,\alpha_2)\in S$ with $x_i\geq \alpha_1$. In this case we obtain $\bs{\alpha}=\min\{(x_i,\alpha_2),(\alpha_1,c_2)\}\in S$, that is a contradiction.\\
Now, by repeatedly using the properties (G2) and (G3), it is easy to observe that, $\operatorname{small}(S)=\{(0,0),(2,2),(3,3),\ldots, (c-1,c-1),(c,c)\}$. If we define $S'$ such that $\operatorname{small}(S')=\{(0,0),(2,2),$ $(3,3),\ldots,  (c,c),(c+1,c+1)\}$, we have found a minimal good semigroup containing $(2,2),(3,3)$ and strictly contained in $S$, in contradiction with the minimality of $S$.
\end{ex}
The following proposition gives a condition that guarantees that $S_{\eta}$ is finite.

\begin{prop} \label{boundcond}
	Suppose  we have $ \eta=\{ \bs{\eta}^{(1)}=(\eta_1^1,\ldots,\eta_n^1),\ldots,\bs{\eta}^{(k)}=(\eta_1^k,\ldots,\eta_n^k) \}\subseteq \mathbb{N}^n$. 
	
	Then the set $S_{\eta}$ is finite if  the following conditions hold:
	\begin{itemize}
		\item $\gcd\left\{ \eta_i^h, h=1,\ldots,k \right\}=1$ for $i=1,\ldots,n;$
		\item For all $i,j \in \left\{1,\ldots,n \right\}$ with $i \neq j$  there exists a $l \in \{ 1,\ldots,k\} $ such that $\eta_i^l \neq \eta_j^l$. 
	\end{itemize}
\end{prop}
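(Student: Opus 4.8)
The plan is to prove the stronger statement that, under the two hypotheses, \emph{every} (local) good semigroup $S$ containing $\eta$ has its conductor $\bs{c}(S)$ bounded above coordinatewise by a single vector $\bs{N}\in\N^n$ depending only on $\eta$. This suffices: a good semigroup is completely determined by its set of small elements (cf. \cite{good:danna}), and if $\bs{c}(S)\le\bs{N}$ then $\operatorname{small}(S)$ is a subset of the finite box $\prod_{i=1}^n\{0,\ldots,N_i\}$; hence there are only finitely many good semigroups with conductor $\le\bs{N}$, and $S_{\eta}$ sits inside this finite family. Thus the whole problem reduces to producing the uniform bound $\bs{N}$.

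For the bound I would work with the additive submonoid $T=\langle\eta\rangle\subseteq\N^n$, which is contained in every $S\in S_{\eta}$. First I would record that each coordinate projection $\pi_i(S)$ is a submonoid of $\N$ containing $\langle\eta_i^1,\ldots,\eta_i^k\rangle$; by the first hypothesis this generating set has $\gcd=1$, so $\pi_i(S)$ is a numerical semigroup and contains every integer $\ge c^{(i)}$, where $c^{(i)}$ depends only on $\eta$. The core observation, where \emph{both} hypotheses enter, is that the generators cannot all be collinear with the origin in a coordinate plane $(i,j)$: if every projected generator equalled $m_h(q,p)$ with $(q,p)$ primitive, then $\gcd\{\eta_i^h\}$ and $\gcd\{\eta_j^h\}$ would be divisible by $q$ and by $p$ respectively, so the first hypothesis would force $q=p=1$, i.e. the diagonal $\eta_i^h=\eta_j^h$, contradicting the second hypothesis. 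Consequently, in the two-branch case, there are generators $u=\eta^a$ and $w=\eta^b$ of different slope, and then $w_1u$ and $u_1w$ are two elements of $T$ with equal first coordinate and distinct second coordinates (and symmetrically two elements with equal second coordinate and distinct first coordinates).

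These ``coordinate-sharing'' pairs are the seeds for a filling argument driven by property (G3): two elements sharing their first coordinate, with minimal second coordinate $Y_0$, yield by (G3) a further element strictly to the right at height $Y_0$; iterating this, and using that $\pi_1(S)$ is cofinite to bridge the gaps, I would produce a full horizontal ray $\{(x,Y_0):x\ge X_0\}\subseteq S$, and symmetrically a vertical ray $\{(X_0',y):y\ge Y_0'\}\subseteq S$, where $X_0,Y_0,X_0',Y_0'$ are determined only by $u,w$ and the $c^{(i)}$. Adding the two rays inside the monoid $S$ then gives the quadrant $(X_0+X_0',\,Y_0+Y_0')+\N^2\subseteq S$, so $\bs{c}(S)\le(X_0+X_0',\,Y_0+Y_0')=:\bs{N}$ uniformly in $S$. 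Together with the reduction of the first paragraph this yields the finiteness of $S_{\eta}$.

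I expect the genuine obstacle to be the ray-propagation step. Property (G3) only asserts the \emph{existence} of one element further along, so turning it into an honest ray requires an induction that keeps the point being extended inside a region where a second, higher element is always available, and then patches the outcome against the cofinite projection. I would also have to check that the scheme lifts from coordinate pairs to general $\N^n$: the second hypothesis is phrased pairwise precisely to drive this, but because (G3) couples the coordinates, the $n$-dimensional filling is considerably more delicate than the planar one and is where most of the care will be needed.
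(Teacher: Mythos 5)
Your outline coincides, step for step, with the architecture of the paper's own proof: the reduction to a uniform conductor bound (finiteness then follows because a good semigroup is determined by its small elements), the observation that the coordinate projections contain numerical semigroups with conductors $c^{(i)}$, the non-collinearity argument from the two hypotheses (your primitive-vector formulation is a cleaner version of the paper's ratio argument), and your seed pair $w_1u$, $u_1w$ is literally the paper's pair $\bs{\alpha}^{(1)}=(\eta_1^l\eta_1^m,\eta_2^l\eta_1^m)$, $\bs{\beta}^{(1)}=(\eta_1^l\eta_1^m,\eta_1^l\eta_2^m)$; summing one ray per coordinate to trap a translated orthant is also exactly the paper's closing step. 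Everything you actually carry out is correct.

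However, the two steps you explicitly defer are precisely where the paper's proof lives, so as it stands the proposal has a genuine gap. For the ray propagation: (G3) applied to your seeds produces a single element $(x,Y_0)$ with $x$ uncontrolled, and nothing in your setup supplies the ``second, higher element'' at the new abscissa needed to iterate. The paper manufactures these ceilings in advance: with $m$ the common first coordinate of the seed pair, it chooses $\bs{\sigma}^{(h)}\in\sgen{\eta}$ with first coordinate $c^{(1)}+h$ for $h=0,\ldots,m-1$ (possible by cofiniteness of the projection), forms the $\oplus$-chain $\bs{\lambda}^{(h)}=\bs{\sigma}^{(h)}\oplus\cdots\oplus\bs{\sigma}^{(m-1)}$, which is increasing with consecutive first coordinates, and uses the products $\bs{\lambda}^{(h)}\odot\bs{\beta}^{(1)}$ as a window of $m$ consecutive ceilings lying strictly above the target height in every coordinate but the first; alternating (G3) (existence of a point to the right at target height) with (G1) (taking the minimum against the next ceiling pins down its abscissa exactly) fills one window, and translating the completed window by $\bs{\beta}^{(1)}$ regenerates ceilings for the next, so the process runs indefinitely. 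Without this (or an equivalent) mechanism the ray, hence the conductor bound, is not established. The same applies to the case $n>2$, which you also defer: the ray argument needs, for each $i$, a pair sharing coordinate $i$ and strictly ordered in \emph{all} remaining coordinates simultaneously, which your pairwise planar pairs do not provide; the paper obtains such pairs by an induction on $n$ that combines projected pairs using both operations (e.g. $\bs{\alpha}^{(1)}=\min(2\bs{\gamma}^{(1)},2\bs{\delta}^{(1)})$ and $\bs{\beta}^{(1)}=\bs{\gamma}^{(1)}\odot\bs{\delta}^{(1)}$ when the extra coordinate is ordered the wrong way), rather than by any coupling of planar fillings through (G3).
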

\begin{proof}
We denote by $ \sgen{\eta}$ the semiring generated by  $\eta$.
We claim that for each $i=1,\ldots,n$,  we can obtain two vectors $\bs{\alpha}^{(i)}=(\alpha_1^i,\ldots,\alpha_n^i)$ and $\bs{\beta}^{(i)}=(\beta_1^i,\ldots,\beta_n^i)$ in $\sgen{\eta}$  such that $$ \alpha_i^i=\beta_i^i \textrm{ and }\alpha_i^j<\beta_i^j \textrm { for all } j \neq i. $$

We will prove this fact by induction on $n$.
\begin{itemize}
	\item \textbf{Base case} $n=2$.
	Suppose that $i=1$. By the second property assumed on the set $\eta$, there exists a $ \bs{\eta}^{(l)} \in \eta$ such that $ \eta_1^l \neq \eta_2^l$. Then $\eta$ must contain a vector $\bs{\eta}^{(m)} $ such that $ \frac{\eta_2^m}{\eta_1^m} \neq  \frac{\eta_2^l}{\eta_1^l} $.
	We assume by contradiction that  $\frac{\eta_2^h}{\eta_1^h} =  \frac{\eta_2^l}{\eta_1^l} \neq 1$ for all $h=1,\ldots,k$. If $\eta_1^l$ did not divide $\eta_2^l$,  it would follow  from $\eta_2^h=\frac{\eta_2^l}{\eta_1^l}\eta_1^h$ that $\eta_1^l$ divides $\eta_1^h$ for all $h=1,\ldots,k$. Hence $\eta_1^l$ would divide $\gcd\left\{\eta_1^h, h=1,\ldots,k \right\}=1$; but this contradicts the first assumption on the set $\eta$. Therefore we have 
    $\frac{\eta_2^l}{\eta_1^l}\in \N$. Since the integer $\frac{\eta_2^l}{\eta_1^l}$, divides $\eta_2^h$ for all $h=1,\ldots,k$; it divides $\gcd\left\{\eta_2^h, h=1,\ldots,k \right\}=1$ but this is a contradiction.
    	
	Then, we  consider $\bs{\eta}^{(m)} $ such that $ \frac{\eta_2^m}{\eta_1^m} \neq  \frac{\eta_2^l}{\eta_1^l} $ and the vectors
	$$ \bs{\alpha}^{(1)}=(\eta_1^l\eta_1^m,\eta^l_2 \eta_1^m),  \qquad \bs{\beta}^{(1)}=(\eta_1^l \eta_1^m,\eta_1^l\eta_2^m) $$
	satisfy our condition because $\eta^l_2 \eta_1^m \neq \eta_1^l\eta_2^m$ and they belong to $\sgen{\eta}$ . 
	For $i=2$ we can use the same strategy.
	\item \textbf{ Inductive step}:
	Let us suppose that the claim is true for $n-1$ and we prove it for $n$. We suppose that $i=1$ (the other cases can be treated in the same way).
	We consider the set $\tilde{\eta}=\left\{ \bs{\eta}^{(h)}=(\eta_1^h,\ldots,\eta_{n-1}^h), h=1,\ldots,k\right\}$ that satisfies the conditions of the theorem. Then, by the inductive step, it easily follows that in $\sgen{\eta}$ there exist two vectors $\bs{\gamma}^{(1)}=(\gamma_1^1,\ldots, \gamma_n^1)$ and  $\bs{\delta}^{(1)}=(\delta_1^1,\ldots, \delta_n^1)$ such that  $$ \gamma^1_1=\delta^1_1 \textrm{ and }\gamma^1_j<\delta^1_j \textrm { for all } j=2,\ldots,n-1. $$
	If $\gamma^1_n<\delta^1_n$, then the claim is true for $\bs{\alpha}^{(1)}=\bs{\gamma}^{(1)}$ and  $\bs{\beta}^{(1)}=\bs{\delta}^{(1)}$. If  $\gamma^1_n>\delta^1_n$, we consider $\bs{\alpha}^{(1)}=\min(2\bs{\gamma}^{(1)},2\bs{\delta}^{(1)})$ and $\bs{\beta}^{(1)}=\bs{\gamma}^{(1)}\odot\bs{\delta}^{(1)}$.
	In fact we have $\alpha^1_1=2\gamma^1_1=\beta^1_1$. If $j \in \left\{ 2,\ldots,n-1\right\}$, then $\alpha^1_j=2\gamma^1_j<\gamma^1_j+\delta^1_j=\beta^1_j$. Finally, we have $\alpha^1_n=2\delta^1_n<\gamma^1_n+\delta^1_n=\beta^1_n$. Thus suppose that  $\gamma^1_n=\delta^1_n$.
	In this case we can consider  $\overline{\eta}=\left\{ \bs{\eta}^{(h)}=(\eta_1^h, \eta_3^h,\ldots,\eta_{n}^h), h=1,\ldots,k\right\}$. By the inductive step there exist two vectors
	$\bs{\gamma}^{(2)}=(\gamma_1^2,\ldots, \gamma_n^2)$ and  $\bs{\delta}^{(2)}=(\delta_1^2,\ldots, \delta_n^2)$ $  \in \sgen{\eta}$  such that  $$ \gamma^2_1=\delta^2_1 \textrm{ and }\gamma^2_j<\delta^2_j \textrm { for all } j=3,\ldots,n. $$
	Then, as we have just seen, if $\gamma^2_2\neq \delta^2_2 $  the claim is true. Therefore we suppose that  $\gamma^2_2= \delta^2_2 $. Then, it is very easy to check that the claim is true with $\bs{\alpha}^{(1)}=\bs{\gamma}^{(1)}\odot\bs{\gamma}^{(2)}$ and  $\bs{\beta}^{(1)}=\bs{\delta}^{(1)}\odot\bs{\delta}^{(2)}$.
\end{itemize}
Now, we denote by $c^{(i)}$ the conductor of the numerical semigroup generated by $\left\{ \eta_i^h: h=1,\ldots,k \right\}$ and we choose $\bs{\alpha}^{(i)}=(\alpha_1^i,\ldots,\alpha_n^i)$ and $\bs{\beta}^{(i)}=(\beta_1^i,\ldots,\beta_n^i)$ in $\sgen{\eta}$ as in the previous claim. We will prove that for each $i \in \left\{1,\ldots,n \right\}$ there exist $c_{i,j}$ for $j=1,\ldots,i-1,i+1,\ldots,n$ such that the vectors 
$$ \bs{\sigma}^i(y)=(c_{i,1},\ldots,c_{i,i-1},c^{(i)}+\alpha^i_i+y,c_{i,i+1},\ldots,c_{i,n}) \in S, $$
for each $S \in S_\eta$, and $y \in \mathbb{N}$.
If this is true then it is clear that  $$ c_{\eta}=\bigodot_{i=1}^n{\bs{\sigma}^i(0)}+\mathbb{N}^n \subseteq S,$$
for all $S \in S_{\eta}$.

Suppose that $i=1$ (the proof is identical in the other cases). Let us consider an arbitrary $S \in S_{\eta}$.  We obviously have $\sgen{\eta} \subseteq S$. We will denote by  $m=\alpha^1_1=\beta^1_1$.  Since $c^{(1)}$ is the conductor of $\langle \left\{ \eta_1^h: h=1,\ldots,k \right\}  \rangle$, we can find  the vectors:
$$ \bs{\sigma}^{(h)}=(\sigma_1^h,\ldots,\sigma_n^h)\in \sgen{\eta}, \qquad \textrm{ for } h=0,\ldots,m-1, $$
such that $\sigma_1^h=c^{(1)}+h$ for all $h=0,\ldots,m-1. $

For each $i=0,\ldots,m-1$ we consider $\bs{\lambda}^{(i)}=\bigoplus_{k=i}^{m-1}{\bs{\sigma}^{(k)}}$. Then we have $\bs{\lambda}^{(0)} \leq \ldots \leq \bs{\lambda}^{(m-1)}$ and, if $\bs{\lambda}^{(h)}=(\lambda_1^h,\ldots,\lambda_n^h)$, then $\lambda_1^h=c^{(1)}+h$.

Now we want to show that $ (c^{(1)}+m+y,\lambda_2^0+\alpha_2^1,\ldots,\lambda_n^0+\alpha_n^1) \in S$ for each $y \in \mathbb{N}$.

We notice that \begin{eqnarray*} \bs{\lambda}^{(0)}\odot\bs{\alpha}^{(1)}&=(c^{(1)}+m,\lambda_2^0+\alpha_2^1,\ldots,\lambda_n^0+\alpha_n^1) \in S, \\
	\bs{\lambda}^{(0)}\odot\bs{\beta}^{(1)}&=(c^{(1)}+m,\lambda_2^0+\beta_2^1,\ldots,\lambda_n^0+\beta_n^1) \in S,
\end{eqnarray*}
thus, recalling that $\alpha_j^1<\beta_j^1$ for all $j=2,\ldots,n$, it follows by (G3) that there exists $x>c^{(1)}+m$ such that $(x,\lambda_2^0+\alpha_2^1,\ldots,\lambda_n^0+\alpha_n^1) \in S.$

Now we consider $$\bs{\lambda}^{(1)}\odot \bs{\beta}^{(1)}=(c^{(1)}+m+1,\lambda_2^1+\beta_2^1,\ldots,\lambda_n^1+\beta_n^1) \in S.$$

Since $\lambda_h^0 \leq \lambda_h^1$ for all $h=2,\ldots,n$ and $\alpha_j^1<\beta_j^1$ for all $j=2,\ldots,n$, we have
$$(x,\lambda_2^0+\alpha_2^1,\ldots,\lambda_n^0+\alpha_n^1) \oplus (\bs{\lambda}^{(1)}\odot\bs{\beta}^{(1)})=(c^{(1)}+m+1,\lambda_2^0+\alpha_2^1,\ldots,\lambda_n^0+\alpha_n^1) \in S. $$

Now, as before, from 
\begin{eqnarray*} \bs(c^{(1)}+m+1,\lambda_2^0+\alpha_2^1,\ldots,\lambda_n^0+\alpha_n^1) \in S,\\
	(c^{(1)}+m+1,\lambda_2^0+\beta_2^1,\ldots,\lambda_n^0+\beta_n^1) \in S,
\end{eqnarray*}
we can deduce that  there exists $x>c^{(1)}+m+1$ such that $(x,\lambda_2^0+\alpha_2^1,\ldots,\lambda_n^0+\alpha_n^1) \in S.$

Repeating the previous considerations and using the fact that $\bs{\lambda}^{(0)} \leq \bs{\lambda}^{(i)}$  for each $i\leq m-1$, we can show that 
$$  (c^{(1)}+m+y,\lambda_2^0+\alpha_2^1,\ldots,\lambda_n^0+\alpha_n^1) \in S,$$ 
for all $y=0,\ldots,m-1$.
Now, we can consider \begin{eqnarray*}	\bs{\lambda}^{(0)}\odot\bs{\beta}^{(1)}&=&(c^{(1)}+m,\lambda_2^0+\beta_2^1,\ldots,\lambda_n^0+\beta_n^1) \in S \\ \ldots \\ \bs{\lambda}^{(m-1)}\odot\bs{\beta}^{(1)}&=&(c^{(1)}+2m-1,\lambda_2^{m-1}+\beta_2^1,\ldots,\lambda_n^{m-1}+\beta_n^1) \in S \end{eqnarray*}
and since $ \lambda_h^j+\beta_h^1 >\lambda_h^0+\alpha_h^1$ for all $j=0,\ldots,m-1$ and $h=2,\ldots,n$, we can use the same strategy to show that 
$$  (c^{(1)}+m+y,\lambda_2^0+\alpha_2^1,\ldots,\lambda_n^0+\alpha_n^1) \in S,$$ 
for all $y=0,\ldots,2m-1$. Now it is clear that we can endlessly repeat the strategy and we finally proved that 
$$  (c^{(1)}+m+y,\lambda_2^0+\alpha_2^1,\ldots,\lambda_n^0+\alpha_n^1) \in S,$$ 
for all $y \in \mathbb{N}$ and for all the $S \in S_{\eta}$ ($S$ was arbitrarily chosen).
Therefore we proved that if $S \in S_{\eta}$, then the conductor of $S$ is smaller than $c_{\eta}$. Now we know that a good semigroup is completely characterized by its small elements. This implies that the set of good semigroups with  a conductor smaller than  $c_{\eta}$ is finite and therefore also $S_{\eta}$ must be finite.
\end{proof}
\begin{oss}
Let us consider a set of vector $\eta\subseteq \N^n$ which satisfies the hypothesis of the previous theorem. The proof of the theorem gives us also a way to determine a bound for the conductor of all good semigroups containing $\eta$. 
\end{oss}
\begin{defi}
\label{E11}
Given a good semigroup $S \subseteq \mathbb{N}^2$ and a set of vector $\eta\subseteq I_A(S)$, we say that $\eta$ is a \emph{system of representatives} of $S$, or more simply \emph{sor}, if $S\in S_{\eta}$. 
\end{defi}
\begin{oss}
\label{E12}
As a consequence of the Theorem \ref{E7}, $I_A(S)$ is a \emph{sor} of $S$, because every semigroup containing the elements of $I_A(S)$ must contain $S$.
\end{oss}

\begin{defi}
\label{E13}
A system of representatives $\eta$ of $S$ is minimal, if given another set of representatives $\eta'\subseteq \eta$, it follows $\eta'=\eta$. We call such a set a  \emph{msor} of $S$.
\end{defi}
It is possible to show that two \emph{msor} can have different cardinalities (see Example \ref{dmsor}).
\begin{defi}
\label{E14}
Given a good semigroup $S$, we define embedding dimension of $S$:
$$\operatorname{edim}(S)=\min\{|\eta|:S\in S_{\eta} \textrm{ and } \eta \subseteq I_A(S)\}.$$
\end{defi}

From this point onwards we will start to analyze the properties of the embedding dimension. We will consider only good semigroups $S\subseteq \N^2$.
Computing all the minimal good semigroups containing a set of vectors is computationally very dispensing, also in the two-branches case. At this point, our first aim is to produce a "fast" algorithm that, in the case of good semigroup $S\subseteq \N^2$, returns a \emph{msor} of $S$. In order to do this we will calculate two bounds for the embedding dimension.

\subsection{An inferior bound for the embedding dimension}
\label{section21}
First of all we want to produce an inferior bound for the embedding dimension. We give the following definitions.

\begin{defi}
\label{pitrack}
Given $\bs{\alpha},\bs{\beta}\in I_A(S)$ we say that $\bs{\alpha}$ and $\bs{\beta}$ are connected by a \emph{piece of track} if they are not comparable, i.e. $\bs{\alpha} \not \leq \bs{\beta}$ and $\bs{\beta} \not \leq \bs{\alpha}$, and denoted by $\bs{\gamma}=\bs{\alpha} \oplus \bs{\beta}$, we have $\Delta^S(\bs{\gamma})\cap (S\setminus I(S))=\emptyset$.
\end{defi}

\begin{defi}
\label{track}
Given $\bs{\alpha_1},\ldots,\bs{\alpha_n}\in I_A(S)$, with $\alpha_{11}<\ldots<\alpha_{n1}$ we say that $\bs{\alpha_1},\ldots,\bs{\alpha_n}$ are connected by a \emph{track} if %,by eventually relabeling the indexes,
we have:
\begin{itemize}
    \item $_2\Delta^S(\bs{\alpha_1})\cap(S\setminus I(S))=\emptyset$;
    \item $_1\Delta^S(\bs{\alpha_n})\cap(S\setminus I(S))=\emptyset$;
    \item $\bs{\alpha_i}$ and $\bs{\alpha_{i+1}}$ are connected by a \emph{piece of track} for all $i\in\{1,\ldots, n-1\}$.
\end{itemize}
In this case, denoted with $\bs{\gamma_i}=\bs{\alpha_i}\oplus \bs{\alpha_{i+1}}$ for $i\in\{1,\ldots, n-1\}$, we set:
$$T((\bs{\alpha_1},\ldots,\bs{\alpha_n}))=\{\bs{\alpha_1}\} \cup {}_2\Delta^S(\bs{\alpha_1}) \cup \left(\cup_{i=1}^{n-1}\Delta^S(\bs{\gamma_i})\right)\cup {}_1\Delta^S(\bs{\alpha_n}) \cup \{\bs{\alpha_n}\},$$
and we call this set  the \emph{track} connecting $\bs{\alpha_1},\ldots,\bs{\alpha_n}$.
\end{defi}
We will simply say that $T\subseteq S$ is  a \emph{track} in $S$ if there exist $\bs{\alpha_1},\ldots,\bs{\alpha_n}\in I_A(S)$ such that $T$ is the track connecting $\bs{\alpha_1},\ldots,\bs{\alpha_n}$.
Notice that  the previous definition implies that a track $T$ of $S$ never contains elements $\bs{\alpha}$ such that $\bs{\alpha} \geq \bs{c(S)}+\bs{e(S)}.$\\

In the following lemma we will show how these definitions are related to the embedding dimension.

\begin{lem}
\label{semcont}
Given a good semigroup $S$, and  a track $T=T((\bs{\alpha_1},\ldots,\bs{\alpha_n}))$ in $S$, then, \begin{comment} considered $T'=T\setminus(\cup_{i=1}^{n-1}\bs{\gamma_i})$, we have that \end{comment} 
$S'=S\setminus T$ is a good semigroup strictly contained in $S$. 
\end{lem}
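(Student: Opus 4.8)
The plan is to verify the three axioms (G1)--(G3) for $S'=S\setminus T$ together with the monoid structure, after first recording the geometry of $T$. Writing $\bs{\alpha_i}=(\alpha_{i1},\alpha_{i2})$, the ordering $\alpha_{11}<\cdots<\alpha_{n1}$ together with the incomparability of consecutive $\bs{\alpha_i}$'s forces $\alpha_{12}>\cdots>\alpha_{n2}$, so $\bs{\gamma_i}=(\alpha_{i1},\alpha_{(i+1)2})$ and $T$ is a descending staircase. The first key observation is that every element of $T$ is irreducible: the pieces ${}_2\Delta^S(\bs{\alpha_1})$, $\Delta^S(\bs{\gamma_i})$, ${}_1\Delta^S(\bs{\alpha_n})$ meet no reducible element by Definition \ref{track}, and $\bs{\alpha_1},\ldots,\bs{\alpha_n}\in I_A(S)\subseteq I(S)$; hence $T\subseteq I(S)$. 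From this, strict containment is immediate ($\bs{\alpha_1}\in T$, so $S'\subsetneq S$), and $S'$ is a submonoid: $\bs{0}\in S'$, and for nonzero $\bs x,\bs y$ locality makes $\bs x+\bs y$ strictly larger than both in each coordinate, hence reducible, hence $\bs x+\bs y\notin T$ and $\bs x+\bs y\in S'$. For (G2) I would invoke the observation (cf.\ Remark \ref{E4}) that $T$ contains no element $\geq\bs{c(S)}+\bs{e(S)}$, so that $\bs{c(S)}+\bs{e(S)}+\N^2\subseteq S'$.

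Next I would record the precise shape of $T$. Since each $\bs{\alpha_i}$ is absolute, on the line $\{x=\alpha_{i1}\}$ nothing of $S$ lies strictly above $\bs{\alpha_i}$ and on $\{y=\alpha_{i2}\}$ nothing lies strictly to the right; consequently the whole fibres $S\cap\{x=\alpha_{n1}\}$ and $S\cap\{y=\alpha_{12}\}$ are contained in $T$, while each $\Delta_1^S(\bs{\gamma_i})$ is the full upward part of $\{x=\alpha_{i1}\}$ above $\bs{\gamma_i}$ and each $\Delta_2^S(\bs{\gamma_i})$ the full rightward part of $\{y=\alpha_{(i+1)2}\}$ right of $\bs{\gamma_i}$. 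I would also note that the corners $\bs{\gamma_i}$ themselves lie in $S'$, not in $T$.

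For (G1), take $\bs x,\bs y\in S'$ with $\bs z=\bs x\oplus\bs y\in S$ and suppose $\bs z\in T$. Then $\bs z\neq\bs x,\bs y$, so up to swapping $\bs x\in\Delta_1(\bs z)$ and $\bs y\in\Delta_2(\bs z)$, giving $\Delta^S(\bs z)\neq\emptyset$; hence $\bs z$ is neither a $\bs{\gamma_i}$ (not in $T$) nor an $\bs{\alpha_i}$ (absolute), so $\bs z$ lies in the interior of a ray of $T$. If that ray is vertical, then $\bs x\in S$ lies directly above $\bs z$ on the same vertical line and therefore belongs to the same ray (by upward closedness, or to the full fibre when $x=\alpha_{n1}$), so $\bs x\in T$, contradicting $\bs x\in S'$; if the ray is horizontal, $\bs y\in T$ by the symmetric argument.

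Property (G3) is the main obstacle. By symmetry I treat a shared first coordinate: $\bs a=(a_1,a_2),\bs b=(a_1,b_2)\in S'$ with $a_2<b_2$, seeking $\bs\epsilon=(\epsilon_1,a_2)\in S'$, $\epsilon_1>a_1$. I would split on the height $a_2$ relative to the track heights $\alpha_{12}>\cdots>\alpha_{n2}$. If $a_2$ is not among them, then $T\cap\{y=a_2\}$ is at most the single vertical-ray point $(\tau,a_2)$, with $\tau=\alpha_{i1}$ for the unique $i$ with $\alpha_{(i+1)2}<a_2<\alpha_{i2}$, or $\tau=\alpha_{n1}$ when $a_2<\alpha_{n2}$, or it is empty when $a_2>\alpha_{12}$. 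Applying (G3) in $S$ yields a witness $(\xi,a_2)\in S$, $\xi>a_1$; if $\xi\neq\tau$ it already lies in $S'$, while if $\xi=\tau$ I re-apply (G3) in $S$ to $(\tau,a_2)$ and the absolute $\bs{\alpha_i}$ (resp.\ $\bs{\alpha_n}$) lying above it on $\{x=\tau\}$, obtaining a witness with first coordinate $>\tau$, which cannot equal $\tau$ and so lies in $S'$. If instead $a_2=\alpha_{j2}$, then $\bs a\in S'$ forces $a_1\leq\alpha_{(j-1)1}$ for $j\geq2$ (the value $j=1$ is impossible, as $\bs{\alpha_1}$ is absolute), and the corner $\bs{\gamma_{j-1}}=(\alpha_{(j-1)1},\alpha_{j2})\in S'$ serves as $\bs\epsilon$ whenever $a_1<\alpha_{(j-1)1}$; finally, if $a_1=\alpha_{(j-1)1}$, i.e.\ $\bs a=\bs{\gamma_{j-1}}$, then $\bs b$ would lie on $\{x=\alpha_{(j-1)1}\}$ above $\bs{\gamma_{j-1}}$, hence in $\Delta_1^S(\bs{\gamma_{j-1}})\subseteq T$, contradicting $\bs b\in S'$, so the hypothesis of (G3) is vacuous. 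I expect the positional bookkeeping of these cases, and especially the trick of re-invoking (G3) inside $S$ to escape a lone track point, to be the delicate part of the argument.
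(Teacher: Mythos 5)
Your proof is correct and follows essentially the same route as the paper's: both verify the monoid property and (G1)--(G2) from the facts that $T\subseteq I(S)$ and that $T$ contains nothing above $\bs{c}+\bs{e}$, and both prove (G3) by taking a (G3)-witness in $S$ and, when it lands on the track, replacing it either by a corner $\bs{\gamma_i}$ (which lies in $S'$) or by re-applying (G3) in $S$ against an irreducible absolute sitting at the end of a track ray. Your case analysis for (G3) is organized by the value of the shared coordinate rather than by the location of the witness as in the paper, but the underlying mechanisms are identical, and your bookkeeping is, if anything, more explicit than the paper's rather terse Cases 2--3.
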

\begin{proof}
If $\bs{\alpha},\bs{\beta}\in S'$, since
$\bs{\alpha},\bs{\beta}\in S$ and $T\cap (S\setminus I(S))=\emptyset$, we have $\bs{\alpha}+\bs{\beta}\in S'$, thus $S'$ is a semigroup. Now, we have to check that $S'$ satisfies the property (G1); therefore, considering $\bs{\alpha},\bs{\beta}\in S'$, we have to prove that $\bs{\alpha} \oplus \bs{\beta} \in S'$. If we suppose 
$\bs{\alpha}\oplus \bs{\beta}\in T$, then: there exists a $\bs{\gamma_i}=\bs{\alpha_i} \oplus \bs{\alpha_{i+1}}$ such that 
$\bs{\alpha} \oplus \bs{\beta}\in \Delta^S(\bs{\gamma_i})$; or $\bs{\alpha}\oplus \bs{\beta}\in {}_1\Delta^S(\bs{\alpha_1})$; or $\bs{\alpha}\oplus \bs{\beta}\in {}_2\Delta^S(\bs{\alpha_n})$. But, in all the previous cases, by the definition of track, 
this would imply that $\bs{\alpha},\bs{\beta}\in T$. Furthermore, for all $\bs{\alpha}\in S$ with $\bs{\alpha}\geq 
\bs{c(S)}+\bs{e(S)}$ we have $\bs{\alpha}\in S'$, thus $S'$ satisfies property (G2). We complete the proof verifying 
 the property (G3). Therefore, we take $\bs{\alpha},\bs{\beta}\in S'$ and suppose that $\bs{\beta}\in 
\Delta_i^{S'}(\bs{\alpha})$, we need to show that $\Delta_j^{S'}(\bs{\alpha})\neq \emptyset$, where 
$j\in\{1,2\}\setminus\{i\}$. Since $\bs{\alpha},\bs{\beta}\in S$, for property (G3), there 
exists $\bs{\delta}\in \Delta_j^S(\bs{\alpha})$. If $\bs{\delta}\in \Delta_j^{S'}(\bs{\alpha})$, the 
thesis is proved; hence we suppose the converse, in this case $\bs{\delta}$ necessarily belongs to 
$T'$. We have two cases. Case 1: there exists $\bs{\gamma_k}=\bs{\alpha_k}\oplus \bs{\alpha_{k+1}}$ such that 
$\bs{\delta}\in \Delta_j^S(\bs{\gamma_k})$, but this implies $\bs{\gamma_k}\in 
\Delta_j^{S'}(\bs{\alpha})$. Case 2: there exists $\bs{\gamma_k}=\bs{\alpha_k}\oplus \bs{\alpha_{k+1}}$ such that 
$\bs{\delta}\in \Delta_i^S(\bs{\gamma_k})$. We notice that, if $\bs{\delta}\in I_A(S)$ we can reduce to the
previous case, hence we can suppose that there exists $\bs{\rho}\neq \bs{\delta}$ with $\bs{\rho}\in 
\Delta_i^S(\bs{\gamma_k})\cap I_A(S)$. But, since $\bs{\rho}, \bs{\delta}\in S$, by property (G3) 
in $S$ and by  definition of track, $\Delta_j^{S'}(\bs{\delta})\neq \emptyset$, then we also  have
$\Delta_j^{S'}(\bs{\alpha})\neq \emptyset$. Case 3: $\bs{\delta}\in \Delta_i^S(\bs{\alpha_1})$ if $i=2$ or $\bs{\delta}\in \Delta_i^S(\bs{\alpha_n})$ if $i=1$; in this case we can conclude the proof with the same argument of Case 2.
\end{proof}

\begin{defi}
\label{hset}
Given $M\subset I_A(S)$, we say that $M$ is an \emph{hitting set} (HS) of $S$, if for any track $T$ in $S$ there exists an element $\bs{\alpha}\in M$ such that $\bs{\alpha}\in T$. 
We say that $M$ is a minimal hitting set (MHS), if for any hitting set $M$ such that $M'\subseteq M$, we have $M'=M$.
\end{defi}

\begin{oss}
\label{hs}
Given a hypergraph $(V,E)$, with $E=\{E_1,\ldots E_n\}$, $E_i\subseteq V$,  a set of vertices $H\subset V$ such that $H\cap E_i \neq \emptyset$ for all $i=1,\ldots,n$ is called \emph{transversal} or \emph{hitting set} \cite{Berge:Hypergraphs}.\\
If we consider the hypergraph with vertices $V=I_A(S)\subset \Gamma_S$ and edges $E=\{T\subset S\text{ : }$ $T \text{ is a track}\}$, then the hitting sets of the good semigroup $S$  correspond exactly to the hitting sets of this hypergraph.
The problem of finding the minimal hitting set of an hypergraph is an NP-hard problem and there are several algorithms related  to its computation (see for example \cite{Eiter:Hypergraph}, \cite{Murakami:hs}).
\end{oss}
We set $\mathfrak{H}=\{M\hspace{0.1cm}|\hspace{0.1cm}M \text{ is a HS}\}$.
\begin{prop}
\label{sorhit}
If $M$ is a sor, then $M\in \mathfrak{H}$.
\end{prop}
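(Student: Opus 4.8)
The plan is to argue by contradiction, packaging all the substantive work into Lemma \ref{semcont}. Suppose $M$ is a sor of $S$ but $M\notin\mathfrak{H}$, i.e.\ $M$ fails to be a hitting set. By Definition \ref{hset} this means there is at least one track $T=T((\bs{\alpha_1},\ldots,\bs{\alpha_n}))$ in $S$ that $M$ misses entirely, so $M\cap T=\emptyset$.

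Next I would feed this track into Lemma \ref{semcont}: it produces a good semigroup $S'=S\setminus T$ that is strictly contained in $S$. The point is then to check that $S'$ still contains $M$. By Definition \ref{E11} a sor is a subset $M\subseteq I_A(S)\subseteq S$ (using the running convention $S=\Gamma_S$), so $M\subseteq S$; combining this with $M\cap T=\emptyset$ gives $M\subseteq S\setminus T=S'$.

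We have therefore exhibited a good semigroup $S'$ with $M\subseteq S'\subsetneq S$. But $M$ being a sor means $S\in S_M$, that is, $S$ is minimal with respect to set inclusion among the good semigroups containing $M$. The existence of the strictly smaller good semigroup $S'$ containing $M$ contradicts this minimality, so no such missed track $T$ can exist, and $M$ is a hitting set.

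I do not expect a genuine obstacle: the whole argument rests on Lemma \ref{semcont}, which already guarantees that deleting a track leaves a good semigroup strictly inside $S$. The only step demanding a moment of care is the verification that removing $T$ does not delete any element of $M$, so that $M$ survives inside $S'$; this is precisely the standing assumption $M\cap T=\emptyset$, and it is what converts the strict inclusion $S'\subsetneq S$ into a violation of the minimality of $S$ in $S_M$.
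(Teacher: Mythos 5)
Your proposal is correct and is essentially identical to the paper's own proof: both argue by contradiction, take a track $T$ missed by $M$, invoke Lemma \ref{semcont} to obtain the good semigroup $S'=S\setminus T$ with $M\subseteq S'\subsetneq S$, and conclude that this contradicts the minimality of $S$ in $S_M$. The only difference is that you spell out the inclusion $M\subseteq S'$ explicitly, which the paper leaves implicit.
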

\begin{proof}
If we suppose that $M$ is not a HS, then it would  exist a track $T$ in $S$ that does not contain elements of $M$. Using the same construction of Lemma \ref{semcont} we could  build a good semigroup $S'$ such that $M\subseteq S'  \subsetneq S$, but it is a contradiction.
\end{proof}
The converse of the previous theorem  is not true in general as it is shown by the following example.
\begin{ex} \label{hsnosor}
Let us consider the good semigroup $S$ with the following set of irreducible absolute elements:
\begin{eqnarray*}I_A(S)&=&\{  (6, 3) , (12, 17), (18, 25), (19, 6), (24, \infty), (25, 28),  (27, 9), (31, \infty), (33, 20), \\
  && (39,\infty), (41, \infty), (44, \infty), (46, \infty), (\infty, 15), (\infty, 23),
  (\infty,31) \}. \end{eqnarray*}
 
  From Figure \ref{Fig2} we can easily deduce that $S$ contains only the following tracks:
  \begin{itemize}
      \item $T_1=T((6,3))$;
      \item $T_2=T((12,17),(19,6))$;
      \item $T_3=T((39,\infty),(\infty,31))$;
       \item $T_4=T((41,\infty),(\infty,23))$;
        \item $T_5=T((41,\infty),(\infty,31))$;
         \item $T_6=T((41,\infty))$;
          \item $T_7=T((46,\infty),(\infty,15))$;
           \item $T_8=T((46,\infty),(\infty,23))$;
            \item $T_9=T((46,\infty),(\infty,31))$.
  \end{itemize}
 
 \begin{figure}[!h]
  
    \centering
    \tikzset{mark size=1}\begin{tikzpicture}[scale=1.5]\begin{axis}[grid=major, xmin=0, ymin=0, xmax=57, ymax=42, ytick={0,3,6,9,15,17,20,23,25,28,29,31,35}, xtick={0,6,12,18,24,27,31,33,39,41,44,47}, yticklabel style={font=\tiny}, xticklabel style={font=\tiny}]\addplot[->, style=dotted, very thick]coordinates{(47.47474747474747,15)(57,15)};\addplot[->, style=dotted, very thick]coordinates{(47.47474747474747,18)(57,18)};\addplot[->, style=dotted, very thick]coordinates{(47.47474747474747,21)(57,21)};\addplot[->, style=dotted, very thick]coordinates{(47.47474747474747,23)(57,23)};\addplot[->, style=dotted, very thick]coordinates{(47.47474747474747,24)(57,24)};\addplot[->, style=dotted, very thick]coordinates{(47.47474747474747,26)(57,26)};\addplot[->, style=dotted, very thick]coordinates{(47.47474747474747,27)(57,27)};\addplot[->, style=dotted, very thick]coordinates{(47.47474747474747,29)(57,29)};\addplot[->, style=dotted, very thick]coordinates{(47.47474747474747,30)(57,30)};\addplot[->, style=dotted, very thick]coordinates{(47.47474747474747,31)(57,31)};\addplot[->, style=dotted, very thick]coordinates{(47.47474747474747,32)(57,32)};\addplot[->, style=dotted, very thick]coordinates{(47.47474747474747,33)(57,33)};\addplot[->, style=dotted, very thick]coordinates{(47.47474747474747,34)(57,34)};\addplot[->, style=dotted, very thick]coordinates{(47.47474747474747,35)(57,35)};\addplot[->, style=dotted, very thick]coordinates{(24,35.46666666666667)(24,42)};\addplot[->, style=dotted, very thick]coordinates{(30,35.46666666666667)(30,42)};\addplot[->, style=dotted, very thick]coordinates{(31,35.46666666666667)(31,42)};\addplot[->, style=dotted, very thick]coordinates{(36,35.46666666666667)(36,42)};\addplot[->, style=dotted, very thick]coordinates{(37,35.46666666666667)(37,42)};\addplot[->, style=dotted, very thick]coordinates{(39,35.46666666666667)(39,42)};\addplot[->, style=dotted, very thick]coordinates{(41,35.46666666666667)(41,42)};\addplot[->, style=dotted, very thick]coordinates{(42,35.46666666666667)(42,42)};\addplot[->, style=dotted, very thick]coordinates{(43,35.46666666666667)(43,42)};\addplot[->, style=dotted, very thick]coordinates{(44,35.46666666666667)(44,42)};\addplot[->, style=dotted, very thick]coordinates{(45,35.46666666666667)(45,42)};\addplot[->, style=dotted, very thick]coordinates{(46,35.46666666666667)(46,42)};\addplot[->, style=dotted, very thick]coordinates{(47,35.46666666666667)(47,42)};\addplot [pattern = north east lines, draw=white]coordinates{(47,35)(57,35)(57,42)(47,42)(47,35)};\addplot[only marks] coordinates{(0,0)(6,3)(12,9)(12,12)(12,15)(12,17)(18,6)(18,21)(18,23)(18,24)(18,25)(19,6)(24,20)(24,31)(24,33)(24,35)(25,12)(25,15)(25,18)(25,20)(25,21)(25,23)(25,24)(25,26)(25,27)(25,28)(27,9)(30,20)(31,20)(31,32)(31,33)(31,34)(31,35)(33,15)(33,18)(33,20)(39,27)(39,29)(39,30)(39,31)(39,32)(39,33)(39,34)(39,35)(41,15)(41,18)(41,21)(41,23)(41,24)(41,26)(41,27)(41,29)(41,30)(41,31)(41,32)(41,33)(41,34)(41,35)(42,23)(42,31)(43,23)(43,31)(44,23)(44,31)(44,35)(45,23)(45,31)(46,18)(46,21)(46,23)(46,24)(46,26)(46,27)(46,29)(46,30)(46,31)(46,32)(46,33)(46,34)(46,35)(47,15)(47,23)(47,31)};\addplot[only marks,mark=o, mark options={scale=2.7}] coordinates{(6,3)(12,17)(18,25)(19,6)(24,35)(25,28)(27,9)(31,35)(33,20)(39,35)(41,35)(44,35)(46,35)(47,15)(47,23)(47,31)};\addplot[only marks, mark=o] coordinates{(12,6)(18,9)(18,12)(18,15)(18,18)(18,20)(24,9)(24,12)(24,15)(24,18)(24,21)(24,23)(24,24)(24,26)(24,27)(24,28)(24,29)(24,30)(24,32)(24,34)(25,9)(30,12)(30,15)(30,18)(30,21)(30,23)(30,24)(30,26)(30,27)(30,29)(30,30)(30,31)(30,32)(30,33)(30,34)(30,35)(31,12)(31,15)(31,18)(31,21)(31,23)(31,24)(31,26)(31,27)(31,29)(31,30)(31,31)(33,12)(36,12)(36,15)(36,18)(36,21)(36,23)(36,24)(36,26)(36,27)(36,29)(36,30)(36,31)(36,32)(36,33)(36,34)(36,35)(37,12)(37,15)(37,18)(37,21)(37,23)(37,24)(37,26)(37,27)(37,29)(37,30)(37,31)(37,32)(37,33)(37,34)(37,35)(38,12)(39,15)(39,18)(39,21)(39,23)(39,24)(39,26)(42,15)(42,18)(42,21)(42,24)(42,26)(42,27)(42,29)(42,30)(42,32)(42,33)(42,34)(42,35)(43,15)(43,18)(43,21)(43,24)(43,26)(43,27)(43,29)(43,30)(43,32)(43,33)(43,34)(43,35)(44,15)(44,18)(44,21)(44,24)(44,26)(44,27)(44,29)(44,30)(44,32)(44,33)(44,34)(45,15)(45,18)(45,21)(45,24)(45,26)(45,27)(45,29)(45,30)(45,32)(45,33)(45,34)(45,35)(46,15)(47,18)(47,21)(47,24)(47,26)(47,27)(47,29)(47,30)(47,32)(47,33)(47,34)(47,35)};\end{axis}\end{tikzpicture}
   
    	\caption{$\bigcirc$: Irreducible Absolutes; $\circ$: Reducible Elements}
    	\label{Fig2}
\end{figure}
 
  Then, it is easy to verify that $M=\{ (6, 3), (12, 17), (39, \infty), (41, \infty), (46, \infty)\}$ is a MHS for $S$.
  
  However, $M$ is not a sor for $S$, in fact it is possible to check that there exists a good semigroup $S'$ with
 \begin{eqnarray*}
    I_A(S')&=&\{(6, 3), (12, 17), (19, 6), (24, \infty), (39, \infty), (41,\infty), (46, \infty), \\ &&
  (50, \infty),(\infty, 18), ( \infty, 29), (\infty, 34)  \},  
 \end{eqnarray*}
 such that $S'$ is strictly contained in $S$ and we have $M\subseteq S'$.
\end{ex}
Now we define: $\operatorname{bedim}(S) = \min\{|M|,\hspace{0.1cm}M\in \mathfrak{H}\}$.
\begin{cor} \label{infb}
Given a good semigroup $S\subseteq \N^2$, $\operatorname{bedim}(S)\leq \operatorname{edim}(S)$.
\end{cor}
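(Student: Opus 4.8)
The plan is to derive the inequality directly from Proposition \ref{sorhit}, since this corollary is essentially a bookkeeping consequence of the containment of families over which the two minima are taken. First I would unwind the two definitions. By Definition \ref{E14}, $\operatorname{edim}(S)$ is the minimum of $|\eta|$ taken over all $\eta\subseteq I_A(S)$ with $S\in S_\eta$, i.e. over all \emph{sor}'s of $S$; while $\operatorname{bedim}(S)$ is the minimum of $|M|$ taken over all $M\in\mathfrak{H}$, i.e. over all hitting sets of $S$. The whole point is to compare these two minimization domains.

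Next I would invoke Proposition \ref{sorhit}, which asserts that every \emph{sor} is a hitting set. In other words, if we let
$$\mathfrak{S}=\{\eta\subseteq I_A(S) : S\in S_\eta\}$$
denote the family of all systems of representatives of $S$, then $\mathfrak{S}\subseteq\mathfrak{H}$. This is the only substantive ingredient, and it has already been established, so nothing new needs to be proved here.

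Finally I would conclude by the elementary monotonicity of the minimum under enlarging the index set: minimizing $|\cdot|$ over the larger family $\mathfrak{H}$ can only produce a value no greater than minimizing over the smaller family $\mathfrak{S}$. Concretely, choose a \emph{sor} $\eta$ realizing the embedding dimension, so that $|\eta|=\operatorname{edim}(S)$; by Proposition \ref{sorhit} we have $\eta\in\mathfrak{H}$, whence
$$\operatorname{bedim}(S)=\min_{M\in\mathfrak{H}}|M|\;\leq\;|\eta|\;=\;\operatorname{edim}(S).$$

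There is no genuine obstacle in this argument: all the real work lives in Proposition \ref{sorhit} (and, behind it, in the construction of Lemma \ref{semcont} that lets one delete a track missed by a candidate set to contradict minimality). The only point to state carefully is the set-theoretic inclusion $\mathfrak{S}\subseteq\mathfrak{H}$, after which the bound is automatic; I would therefore keep the write-up to the two lines above rather than reproving anything.
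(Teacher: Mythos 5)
Your proof is correct and is exactly the argument the paper intends: the corollary is stated without proof precisely because it follows immediately from Proposition \ref{sorhit} (every \emph{sor} is a hitting set) together with the fact that minimizing cardinality over the larger family $\mathfrak{H}$ can only give a value no greater than minimizing over the family of \emph{sor}'s. Nothing is missing, and your two-line write-up is the appropriate level of detail.
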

\begin{ex}
The inequality of Corollary \ref{infb} can be strict. In fact, for instance, it is possible to check that each minimal hitting set of the semigroup $S$ described in Example \ref{hsnosor} is not a \emph{sor} for $S$, implying that $\operatorname{bedim}(S)< \operatorname{edim}(S)$.
\end{ex}

\subsection{A superior bound for the embedding dimension}
\label{section22}
\begin{comment}
\begin{defi}
\label{E15}
Given $\bs{\alpha}=(\alpha_1,\alpha_2)\in I_A(S)$, we say that $\bs{\alpha}$ is an essential element, if considering:
\begin{eqnarray*}
(I_A^{\bs{\alpha}})_1=\{(\alpha_1,y)\in \Gamma_S|(\alpha_1,y)\in \sgen{I_A(S)\backslash\{\bs{\alpha}\}}\} \\
(I_A^{\bs{\alpha}})_2=\{(x,\alpha_2)\in \Gamma_S|(x,\alpha_2)\in \sgen{I_A(S)\backslash\{\bs{\alpha}\}}\}
\end{eqnarray*}
we have $(I_A^{\bs{\alpha}})_1\cup(I_A^{\bs{\alpha}})_2=\emptyset$.
\end{defi}
In other words the essential element are the elements that we need to preserve in every system of representatives; since, removing one of them, we would lose elements in the projections of the semigroup.\\\end{comment}

Let  $S\subseteq \N^2$ be a good semigroup; given $\eta\subseteq I_A(S)$, and $\bs{\alpha}\in I_A(S)$, we want to define the reducibility of $\bs{\alpha}$ with respect to $\eta$. By convention we will say that all the elements $\bs{\alpha}\in \eta$ are reducible by $\eta$.
We take $\bs{\alpha}\in I_A(S)\backslash \eta$ and we will treat the finite and infinite elements separately.\\
%Case 0: 
\textbf{Finite case}:
We suppose $\bs{\alpha}=(\alpha_1,\alpha_2)\in I_{A_f}(S)\backslash{\eta}$.\\
Given a semiring $\Gamma \subseteq \overline{\mathbb{N}}^2$, we introduce the following notations:
\begin{eqnarray*}
 {}_i\Delta^{\Gamma}(\bs{\alpha})&:=&\Gamma \cap {}_i\Delta(\bs{\alpha})\\
		{}_1\delta^{\Gamma}(\bs{\alpha})&:=&\max\{y|(a,y)\in  {}_1\Delta^{\Gamma}(\bs{\alpha})\} \textrm{ if } {}_1\Delta^{\Gamma}(\bs{\alpha})\neq \emptyset \\
		{}_2\delta^{\Gamma}(\bs{\alpha})&:=&\max\{x|(x,b)\in  {}_2\Delta^{\Gamma}(\bs{\alpha})\}\textrm{ if } {}_2\Delta^{\Gamma}(\bs{\alpha})\neq \emptyset .
\end{eqnarray*}
\begin{comment}

Let us consider the following sets: 
\begin{eqnarray*}
U^{\eta}(\bs{\alpha})=\{(\alpha_1,y)\in \sgen{\eta}\hspace{0.1cm}|\hspace{0.1cm}y<\alpha_2\},\hspace{0.5cm}R^{\eta}(\bs{\alpha})=\{(x,\alpha_2)\in \sgen{\eta}\hspace{0.1cm}|\hspace{0.1cm}x<\alpha_1\}.
\end{eqnarray*}
If $U^{\eta}(\bs{\alpha})\neq \emptyset$, we introduce:
$$u^{\eta}(\bs{\alpha})=\max\{y|(a,y)\in U^{\eta}(\bs{\alpha})\};$$
$$M_{U}(\bs{\alpha})=\max\{M|(\alpha_1,M)\in S\backslash\{(\alpha_1,\alpha_2)\}\}.$$
If $R^{\eta}(\bs{\alpha})\neq \emptyset$, we introduce:
$$r^{\eta}(\bs{\alpha})=\max\{x|(x,\alpha_2)\in R^{\eta}(\bs{\alpha})\};$$
$$M_{R}(\bs{\alpha})=\max\{M|(M,\alpha_2)\in S\backslash\{(\alpha_1,\alpha_2)\}\}.$$

If there is not ambiguity on the set $\eta$, in order to simplify the notation we write simply $U(\bs{\alpha})$, $R(\bs{\alpha})$, $u(\bs{\alpha})$, $r(\bs{\alpha})$.
\end{comment}
Notice that the fact that $\bs{\alpha}$ is an absolute finite  element implies that  ${}_i\delta^{\Gamma}(\bs{\alpha})$ is finite. 
In the following, given $\eta \subseteq I_A(S)$, we will work with the semiring $\sgen{\eta}$. In order to simplify the notation we will write $ {}_i\Delta^{\eta}(\bs{\alpha})$ instead of ${}_i\Delta^{\sgen{\eta}}(\bs{\alpha})$ 
\begin{comment}

\begin{oss}
\label{ess}
We observe that $\bs{\alpha}$ is essential if and only if, considering $\eta=I_A(S)\backslash\{\bs{\alpha}\}$, $U(\bs{\alpha})\cup R(\bs{\alpha})= \emptyset$.
\end{oss}
\end{comment}
\begin{oss}
\label{ineq}
 If $ {}_i\Delta^{\eta}(\bs{\alpha})\neq \emptyset$, we have ${}_i\delta^{\eta}(\bs{\alpha})\leq {}_i\delta^{S}(\bs{\alpha})$.
\end{oss}

If$ {}_1\Delta^{\eta}(\bs{\alpha})\neq \emptyset$ , we define $Y^{\eta}(\bs{\alpha})=\{y\in\{{}_1\delta^{\eta}(\bs{\alpha}),\ldots,{}_1\delta^{S}(\bs{\alpha})\}|(\alpha_1,y)\in S\}$ and similarly if $ {}_2\Delta^{\eta}(\bs{\alpha})\neq \emptyset$ , we define $X^{\eta}(\bs{\alpha})=\{x\in\{{}_2\delta^{\eta}(\bs{\alpha}),\ldots,{}_2\delta^{S}(\bs{\alpha})\}|(x,\alpha_2)\in S\}$ .

\begin{defi}
We say that $\bs{\alpha}=(\alpha_1,\alpha_2)\in I_{A_f}(S)\backslash{\eta}$ is reducible by $\eta$, if ${}_1\Delta^{\eta}(\bs{\alpha})\cup {}_2\Delta^{\eta}(\bs{\alpha})\neq \emptyset$ and one of the following conditions is satisfied:
\label{defind}
\begin{enumerate}
	\item ${}_1\Delta^{\eta}(\bs{\alpha})\neq \emptyset$, and for all $y\in Y^{\eta}(\bs{\alpha})$, there exists $(x,y)\in \sgen{\eta}$ such that $x>\alpha_1$.
	\item ${}_2\Delta^{\eta}(\bs{\alpha})\neq \emptyset$, and for all $x\in X^{\eta}(\bs{\alpha})$, there exists $(x,y)\in \sgen{\eta}$ such that $y>\alpha_2$.

\end{enumerate}
\end{defi}

\textbf{Infinite case:}
If $\bs{\alpha}=(\alpha_1,\infty)\in I_{A}(S)^{\infty}\backslash{\eta}$, then we consider $\tilde{y}$  such that $(\alpha_1,y)\in S$ for all $y\geq \tilde{y}$ (it exists by Lemma \ref{E2}). Let us consider the set: $$Y^{\eta}(\bs{\alpha})=\{y\in \{{}_1\delta^{\eta}(\bs{\alpha}),\ldots,\max\{\tilde{y},{}_1\delta^{\eta}(\bs{\alpha})\}+e_2-1\}\hspace{0.1cm}|\hspace{0.1cm} (\alpha_1,y)\in S\}.$$
If $\bs{\alpha}=(\infty,\alpha_2)\in I_{A}(S)^{\infty}\backslash{\eta}$, then we consider $\tilde{x}$  such that $(x,\alpha_2)\in S$ for all $x\geq \tilde{x}$ (it exists by Lemma \ref{E2}). Let us consider the set: $$X^{\eta}(\bs{\alpha})=\{x\in \{{}_2\delta^{\eta}(\bs{\alpha}),\ldots,\max\{\tilde{x},{}_2\delta^{\eta}(\bs{\alpha})\}+e_1-1\}\hspace{0.1cm}|\hspace{0.1cm} (x,\alpha_2)\in S\}.$$
%(if $\bs{\alpha}$ is in the form $(\infty,\alpha_2)$ we can repeat the following definitions interchanging the role of the elements on the first component with the elements on the second one).\\
\begin{defi}
\label{E19}
We say that $\bs{\alpha}=(\alpha_1,\infty)$ is reducible by $\eta$, if ${}_1\Delta^{\eta}(\bs{\alpha})\neq \emptyset$ and for all $y\in Y^{\eta}(\bs{\alpha})$, there exists an element $(x,y)\in \sgen{\eta}$ with $x>\alpha_1$.
We say that $\bs{\alpha}=(\infty,\alpha_2)$ is reducible by $\eta$, if ${}_2\Delta^{\eta}(\bs{\alpha})\neq \emptyset$ and for all $x\in X^{\eta}(\bs{\alpha})$, there exists an element $(x,y)\in \sgen{\eta}$ with $y>\alpha_2$.
\end{defi}
As we will see in detail in the proof of Theorem \ref{E21}, the previous definitions are motivated by the fact  that the reducibility of an  element $\bs{\alpha} \in I_A(S)$ by a set $\eta \subseteq I_A(S)$ essentially ensures that the presence of $\bs{\alpha}$ in $I_A(S)$ is forced by $\eta$ as a consequence of property (G3) of good semigroups.

Given $\eta\subseteq I_A(S)$, we set:
$$\red{\eta}:=\{\bs{\alpha}\in I_A(S)\hspace{0.1cm}|\hspace{0.1cm} \bs{\alpha} \text{ is reducible by } \eta\}.$$ 

Let us consider the following algorithm:
\begin{algorithm}
	\SetKwData{Left}{left}
	\SetKwData{This}{this}
	\SetKwData{Up}{up}
	\SetKwFunction{Union}{Union}
	\SetKwFunction{FindCompress}{FindCompress}
	\SetKwInOut{Input}{input}
	\SetKwInOut{Output}{output}
	\Input{$\eta\subseteq I_A(S)$}
	\Output{A subset $\eta'$, with $\eta\subseteq \eta'\subseteq I_A$}
	\BlankLine
	$\eta' \longleftarrow \langle \langle \eta\rangle \rangle$\\
	\While{$\eta'\neq \eta$} {
	{$\eta\longleftarrow \eta'$}\\
	{$\eta'\longleftarrow \langle \langle \eta\rangle \rangle$}
	}
	\Return $ \eta'$
	\label{algo1}
	\caption{Algorithm to find $\operatorname{red}(\eta)$}
\end{algorithm} 

The input of Algorithm \ref{algo1} is a subset $\eta$ of $I_A(S)$. As long as we can, we expand $\eta$ by including elements of $I_A(S) \setminus{\eta}$ that are reducible by it. Notice that the algorithm produces an output in finite time, since $I_A(S)$ is finite.
We denote by $\operatorname{red}(\eta)$ the output of the previous algorithm and we introduce the set $\mathfrak{R}(S)=\{\eta \subseteq I_A(S)\hspace{0.1cm}|\hspace{0.1cm}\operatorname{red}(\eta)=I_A(S)\}$. We will say that $\eta\subseteq I_A(S)$ satisfy the \emph{reducibility condition} if $\eta \in \mathfrak{R}(S)$.\\
We have the following statement:
\begin{teo} 
\label{E21}
If $\eta\in \mathfrak{R}(S)$, then $\eta$ is a sor.
\end{teo}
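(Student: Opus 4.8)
The plan is to prove minimality of $S$ among good semigroups containing $\eta$ directly: viewing every good semigroup, by the standing convention, as its semiring, I will show that any good semigroup $S'$ with $\eta\subseteq S'\subseteq S$ (so that automatically $\sgen{\eta}\subseteq S'$, since $S'$ is closed under $\oplus$ and $\odot$) must equal $S$. Together with the trivial remark that $S$ is a good semigroup containing $\eta$, this says exactly that $S$ is minimal in $S_{\eta}$, i.e. $S\in S_{\eta}$, which by Definition \ref{E11} is the meaning of ``$\eta$ is a sor''. The whole argument rests on one key lemma: \emph{if $\bs{\alpha}\in I_A(S)$ is reducible by a set $\mu\subseteq I_A(S)$ and $S'$ is a good semigroup with $\sgen{\mu}\subseteq S'\subseteq S$, then $\bs{\alpha}\in S'$}. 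This is the precise form of the slogan that reducibility \emph{forces} $\bs{\alpha}$ through property (G3).

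Granting the key lemma, I would conclude as follows. Write $\eta_0=\eta$ and $\eta_{k+1}=\red{\eta_k}$ for the successive iterates produced by Algorithm \ref{algo1}. I argue by induction that $\eta_k\subseteq S'$ for all $k$: if $\eta_k\subseteq S'$ then $\sgen{\eta_k}\subseteq S'$, and every element adjoined in passing from $\eta_k$ to $\red{\eta_k}$ is reducible by $\eta_k$, hence lies in $S'$ by the key lemma. Since $I_A(S)$ is finite the iteration stabilises at $\operatorname{red}(\eta)$, and the hypothesis $\eta\in\mathfrak{R}(S)$ gives $\operatorname{red}(\eta)=I_A(S)\subseteq S'$. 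Thus $\sgen{I_A(S)}\subseteq S'$, and Theorem \ref{E7} (stating that $I_A(S)$ generates $\Gamma_S=S$ as a semiring) yields $S\subseteq S'$; combined with $S'\subseteq S$ this gives $S'=S$, as required.

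It remains to prove the key lemma, which is where the real work lies. For a finite absolute $\bs{\alpha}=(\alpha_1,\alpha_2)$ reducible by condition $1$ of Definition \ref{defind}, I would let $y^{*}$ be the largest $y\le\alpha_2$ with $(\alpha_1,y)\in S'$; this is well defined since $(\alpha_1,{}_1\delta^{\mu}(\bs{\alpha}))\in\sgen{\mu}\subseteq S'$. If $y^{*}<\alpha_2$, then $(\alpha_1,y^{*})\in S$ with ${}_1\delta^{\mu}(\bs{\alpha})\le y^{*}\le{}_1\delta^{S}(\bs{\alpha})$, so $y^{*}\in Y^{\mu}(\bs{\alpha})$ and reducibility provides $(x,y^{*})\in\sgen{\mu}\subseteq S'$ with $x>\alpha_1$. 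Applying (G3) to the two $S'$-elements $(\alpha_1,y^{*})$ and $(x,y^{*})$, which share their second coordinate, yields $(\alpha_1,\epsilon)\in S'$ with $\epsilon>y^{*}$; since $\bs{\alpha}$ is a finite absolute there is no element of $S$ strictly above it on the column $x=\alpha_1$, forcing $\epsilon\le\alpha_2$ and contradicting the maximality of $y^{*}$. Hence $y^{*}=\alpha_2$ and $\bs{\alpha}\in S'$; condition $2$ is symmetric, exchanging rows and columns.

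The infinite case is the main obstacle and needs more care, because the ceiling $\alpha_2$ that closed the finite argument is replaced by the whole upper ray. For $\bs{\alpha}=(\alpha_1,\infty)$ I would first imitate the finite argument to fill the column over the window defining $Y^{\mu}(\bs{\alpha})$: running the (G3)-step together with an intersection (G1) against the reducibility-points $(x,y)\in\sgen{\mu}$ at consecutive heights shows $(\alpha_1,y)\in S'$ for all $y\in Y^{\mu}(\bs{\alpha})$, in particular for the $e_2$ consecutive heights above $\max\{\tilde{y},{}_1\delta^{\mu}(\bs{\alpha})\}$, where the column already lies entirely in $S$. The window has width exactly $e_2$ precisely so that these heights bridge to the conductor regime: using a helper column $x=x_0>\alpha_1$ lying in $S'$ for all large second coordinate (such a column exists because $S'\supseteq\bs{c(S')}+\N^2$: writing $\bs{c(S')}=(c_1',c_2')$ and taking $x_0=\max\{c_1',\alpha_1+1\}>\alpha_1$, one has $(x_0,y)\in S'$ for every $y\ge c_2'$), a further climb by (G3) and fill-in by (G1) propagate membership up the whole column, giving $(\alpha_1,y)\in S'$ for all $y$ beyond some bound, hence $(\alpha_1,\infty)\in S'$ by Lemma \ref{E2} applied to $S'$. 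The delicate bookkeeping---verifying that no height is skipped between the window and the conductor level, which is exactly where the multiplicity $e_2$ enters---is the single point that must be carried out in full; the case $\bs{\alpha}=(\infty,\alpha_2)$ is again symmetric.
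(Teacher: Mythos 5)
Your global strategy is the paper's own argument, reorganized: the paper runs a decreasing induction along the chain $\eta\subset\eta_1\subset\ldots\subset\eta_n=I_A(S)$ and derives a contradiction from a hypothetical good semigroup $S_i$ with $\eta_i\subseteq S_i\subsetneq S$, while you run the same induction forwards through your ``key lemma''; the forcing mechanism (reducibility witnesses plus property (G3)) is identical, and your finite case is correct --- in fact cleaner than the paper's, since the maximality argument for $y^{*}$ replaces the paper's ``repeat the same argument until we obtain $(\alpha_1,{}_1\delta^{S}(\bs{\alpha}))\in S_i$'' by a one-step contradiction.

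The gap is in the infinite case, and it is exactly the step you defer. Your helper column $x_0=\max\{c_1',\alpha_1+1\}$ only supplies elements $(x_0,y)\in S'$ at heights $y\geq c_2'$, where $\bs{c}'=(c_1',c_2')$ is the conductor of the \emph{unknown} semigroup $S'$. To start the (G3) climb against this column you must already possess some $(\alpha_1,Y)\in S'$ with $Y\geq c_2'$; but what the window argument produces is the filled block $\{m,\ldots,m+e_2-1\}$, with $m=\max\{\tilde y,{}_1\delta^{\mu}(\bs{\alpha})\}$, plus a single uncontrolled escape element $Y_0\geq m+e_2$, and $c_2'$ can lie arbitrarily far above $m+e_2$ (it is only bounded below by $c_2(S)$, and $S'$ ranges over all good semigroups between $\sgen{\mu}$ and $S$). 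So the conductor column cannot get the climb started: to use it you need an element of the column at conductor height, and producing such an element is precisely the problem. The natural repair --- translating the filled block upward by elements of $S'$ so as to manufacture right-hand witnesses at every height --- does not obviously work either: the smallest vertical step available in $S'$ is $e_2(S')$, and since $S'\subseteq S$ one only knows $e_2(S')\geq e_2(S)$ (the vector $\bs{e}(S)$ itself need not belong to $S'$), so translates of a block of width $e_2(S)$ can leave gaps. Hence the ``delicate bookkeeping'' you set aside is not a routine verification; it is the actual mathematical content of the infinite case, and the tools you name do not suffice to carry it out. To be fair, this is also the tersest point of the paper's own proof, which after reaching the window top invokes Lemma \ref{E2} to conclude $(\alpha_1,\infty)\in S_i$; but a self-contained proof must close this step, and yours, as written, does not.
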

\begin{proof}
 From $\eta\in  \mathfrak{R}(S)$ it follows that there exists a chain of subset of $I_A(S)$:
$$\eta\subset \eta_1 \subset \ldots \subset \eta_{n-1}\subset \eta_n=\operatorname{red}(\eta)=I_A(S)$$
such that $\eta_i=\red{\eta_{i-1}}$
We prove that $\eta$ is a \emph{sor} using a decreasing induction on this chain. We have that $\eta_n=I_A(S)$ is a \emph{sor} for Remark \ref{E12}, now we prove that if $\eta_{i+1}$ is a \emph{sor}, then $\eta_{i}$ is a \emph{sor}.\\
We assume by contradiction that $S\notin S_{\eta_{i}}$; in this case there exists a good semigroup $S_{i}$ such that $\eta_i\subseteq S_i\subsetneq S$.\\
If we suppose $\eta_{i+1}\subseteq I_A(S_i)$, we would have $\eta_{i+1}\subseteq \sgen{\eta_{i+1}} \subseteq\sgen{I_A(S_{i})}=S_i\subsetneq S$, against the fact that $\eta_{i+1}$ is a \emph{sor} for $S$.
For this reason, we can always suppose that there exists $\bs{\alpha}=(\alpha_1,\alpha_2)\in \eta_{i+1} \backslash I_A(S_i)$. Furthermore we observe that $\bs{\alpha}\notin \eta_i$, indeed, assuming the opposite, we should have $\bs{\alpha}\in S_i$ and since $S_i\subseteq S$ and $\bs{\alpha}\in \eta_{i+1}\subseteq I_A(S)$, it would imply that $\bs{\alpha}\in I_A(S_i)$. We distinguish two case: $\bs{\alpha}\in \eta_{i+1}\cap I_{A_f}(S)$ and $\bs{\alpha}\in \eta_{i+1} \cap I_{A}^{\infty}(S)$.\\
\textbf{Case 1:} $\bs{\alpha}\in \eta_{i+1}\cap I_{A_f}(S)$.\\
Since $\red{\eta_i}=\eta_{i+1}$, $\bs{\alpha}$ is reducible by $\eta_i$. Without loss of generality we can assume ${}_1\Delta^{\eta_i}(\bs{\alpha})\neq \emptyset$; in this case there exists $(\alpha_1,{}_1\delta^{\eta_i}(\bs{\alpha}))\in \sgen{\eta_i}\subseteq S_i$.  We have ${}_1\delta^{\eta_i}(\bs{\alpha})\in Y^{\eta_1}(\bs{\alpha})$ and, from the reducibility of $\bs{\alpha}$ by $\eta_i$, there exists $(x^{\eta}(\bs{\alpha}),{}_1\delta^{\eta_i}(\bs{\alpha}))\in \sgen{\eta_i}\subseteq S_i$.
 We have obtained two elements $(\alpha_1,{}_1\delta^{\eta_i}(\bs{\alpha}))$, $(x^{\eta}(\bs{\alpha}),{}_1\delta^{\eta_i}(\bs{\alpha}))\in S_i$, by property (G3), there exists $(\alpha_1,y)\in S_i$, with $y>{}_1\delta^{\eta_i}(\bs{\alpha})$. We observe that, from the definition of ${}_1\delta^{S}(\bs{\alpha})$, $y\leq {}_1\delta^{S}(\bs{\alpha})$. Hence $y\in Y^{\eta_i}(\bs{\alpha})$. We can repeat the same argument until we obtain that $(\alpha_1,{}_1\delta^{S}(\bs{\alpha}))\in S_i$. Using again the property (G3) we should obtain $\bs{\alpha}\in S_i$ (notice that $\Delta_1^{S_i}(\bs{\alpha})=\emptyset$), but this is a contradiction.\\
 \textbf{Case 2:} $\bs{\alpha}\in \eta_{i+1}\cap I_{A^{\infty}}(S)$.\\ Without loss of generality we can suppose $\bs{\alpha}=(\alpha_1,\infty)$. Since $\bs{\alpha}$ is reducible by $\eta_i$, we have ${}_1\Delta^{\eta_i}(\bs{\alpha})\neq \emptyset$. We set $M(\bs{\alpha}):=\max\{\tilde{y},u\}+e_2-1$, where $\tilde{y}$ is such that $(\alpha_1,y)\in S$ for any $y>\tilde{y}$. Now, using the same argument of the finite case, we obtain that $(\alpha_1, M(\bs{\alpha}))\in S_i$, but, by Lemma \ref{E2}, this implies $(\alpha_1,\infty)\in S_i$ which is a contradiction.\\ 
\end{proof}
The following example shows that the converse of the previous theorem  is not true in general.
\begin{ex}
\label{sornotred}
Let us consider the good semigroup $S$ with the following set of irreducible absolute elements:
\begin{eqnarray*}I_A(S)&=&\{  (3, 4) , (6, \infty), (7, 8), (10, 15), (14,18), (17, 25),  (\infty, 12), (\infty,19), (\infty, 22),
  (\infty,29) \}. \end{eqnarray*}
  
\begin{figure}[!h]
	\begin{minipage}{0.5 \textwidth}
		\tikzset{mark size=1}\begin{tikzpicture}\begin{axis}[grid=major, xmin=0, ymin=0, xmax=26, ymax=36, ytick={0,4,8,12,15,18,19,22,25,26,29,30}, xtick={0,3,7,10,14,17,18,21}, yticklabel style={font=\tiny}, xticklabel style={font=\tiny}]\addplot[->, style=dotted, very thick]coordinates{(21.4468085106383,12)(26,12)};\addplot[->, style=dotted, very thick]coordinates{(21.4468085106383,16)(26,16)};\addplot[->, style=dotted, very thick]coordinates{(21.4468085106383,19)(26,19)};\addplot[->, style=dotted, very thick]coordinates{(21.4468085106383,20)(26,20)};\addplot[->, style=dotted, very thick]coordinates{(21.4468085106383,22)(26,22)};\addplot[->, style=dotted, very thick]coordinates{(21.4468085106383,23)(26,23)};\addplot[->, style=dotted, very thick]coordinates{(21.4468085106383,24)(26,24)};\addplot[->, style=dotted, very thick]coordinates{(21.4468085106383,26)(26,26)};\addplot[->, style=dotted, very thick]coordinates{(21.4468085106383,27)(26,27)};\addplot[->, style=dotted, very thick]coordinates{(21.4468085106383,28)(26,28)};\addplot[->, style=dotted, very thick]coordinates{(21.4468085106383,29)(26,29)};\addplot[->, style=dotted, very thick]coordinates{(21.4468085106383,30)(26,30)};\addplot[->, style=dotted, very thick]coordinates{(6,30.46153846153846)(6,36)};\addplot[->, style=dotted, very thick]coordinates{(9,30.46153846153846)(9,36)};\addplot[->, style=dotted, very thick]coordinates{(12,30.46153846153846)(12,36)};\addplot[->, style=dotted, very thick]coordinates{(13,30.46153846153846)(13,36)};\addplot[->, style=dotted, very thick]coordinates{(15,30.46153846153846)(15,36)};\addplot[->, style=dotted, very thick]coordinates{(16,30.46153846153846)(16,36)};\addplot[->, style=dotted, very thick]coordinates{(18,30.46153846153846)(18,36)};\addplot[->, style=dotted, very thick]coordinates{(19,30.46153846153846)(19,36)};\addplot[->, style=dotted, very thick]coordinates{(20,30.46153846153846)(20,36)};\addplot[->, style=dotted, very thick]coordinates{(21,30.46153846153846)(21,36)};\addplot [pattern = north east lines, draw=white]coordinates{(21,30)(26,30)(26,36)(21,36)(21,30)};\addplot[only marks] coordinates{(0,0)(3,4)(6,12)(6,15)(6,16)(6,18)(6,19)(6,20)(6,22)(6,23)(6,24)(6,25)(6,26)(6,27)(6,28)(6,29)(6,30)(7,8)(9,15)(9,18)(9,25)(10,15)(12,12)(12,18)(12,25)(13,12)(13,18)(13,22)(13,25)(13,29)(14,12)(14,18)(15,12)(15,19)(15,25)(16,12)(16,19)(16,25)(17,12)(17,19)(17,24)(17,25)(18,12)(18,19)(18,22)(19,12)(19,19)(19,22)(20,12)(20,19)(20,22)(21,12)(21,19)(21,22)(21,29)};\addplot[only marks,mark=o, mark options={scale=2.7}] coordinates{(3,4)(6,30)(7,8)(10,15)(14,18)(17,25)(21,12)(21,19)(21,22)(21,29)};\addplot[only marks, mark=o] coordinates{(6,8)(9,12)(9,16)(9,19)(9,20)(9,22)(9,23)(9,24)(9,26)(9,27)(9,28)(9,29)(9,30)(10,12)(12,16)(12,19)(12,20)(12,22)(12,23)(12,24)(12,26)(12,27)(12,28)(12,29)(12,30)(13,16)(13,19)(13,20)(13,23)(13,24)(13,26)(13,27)(13,28)(13,30)(14,16)(15,16)(15,20)(15,22)(15,23)(15,24)(15,26)(15,27)(15,28)(15,29)(15,30)(16,16)(16,20)(16,22)(16,23)(16,24)(16,26)(16,27)(16,28)(16,29)(16,30)(17,16)(17,20)(17,22)(17,23)(18,16)(18,20)(18,23)(18,24)(18,26)(18,27)(18,28)(18,29)(18,30)(19,16)(19,20)(19,23)(19,24)(19,26)(19,27)(19,28)(19,29)(19,30)(20,16)(20,20)(20,23)(20,24)(20,26)(20,27)(20,28)(20,29)(20,30)(21,16)(21,20)(21,23)(21,24)(21,26)(21,27)(21,28)(21,30)};\end{axis}\end{tikzpicture}		
	\end{minipage}
	\hspace{0.5cm}
	\begin{minipage}{0.5 \textwidth}
	\tikzset{mark size=1}\begin{tikzpicture}\begin{axis}[grid=major, xmin=0, ymin=0, xmax=26, ymax=36, ytick={0,4,8,12,15,18,19,22,25,26,29,30}, xtick={0,3,7,10,14,17,18,21}, yticklabel style={font=\tiny}, xticklabel style={font=\tiny}]\addplot[->, style=dotted, very thick]coordinates{(21.4468085106383,12)(26,12)};\addplot[->, style=dotted, very thick]coordinates{(21.4468085106383,16)(26,16)};\addplot[->, style=dotted, very thick]coordinates{(21.4468085106383,19)(26,19)};\addplot[->, style=dotted, very thick]coordinates{(21.4468085106383,20)(26,20)};\addplot[->, style=dotted, very thick]coordinates{(21.4468085106383,22)(26,22)};\addplot[->, style=dotted, very thick]coordinates{(21.4468085106383,23)(26,23)};\addplot[->, style=dotted, very thick]coordinates{(21.4468085106383,24)(26,24)};\addplot[->, style=dotted, very thick]coordinates{(21.4468085106383,26)(26,26)};\addplot[->, style=dotted, very thick]coordinates{(21.4468085106383,27)(26,27)};\addplot[->, style=dotted, very thick]coordinates{(21.4468085106383,28)(26,28)};\addplot[->, style=dotted, very thick]coordinates{(21.4468085106383,29)(26,29)};\addplot[->, style=dotted, very thick]coordinates{(21.4468085106383,30)(26,30)};\addplot[->, style=dotted, very thick]coordinates{(6,30.46153846153846)(6,36)};\addplot[->, style=dotted, very thick]coordinates{(9,30.46153846153846)(9,36)};\addplot[->, style=dotted, very thick]coordinates{(12,30.46153846153846)(12,36)};\addplot[->, style=dotted, very thick]coordinates{(13,30.46153846153846)(13,36)};\addplot[->, style=dotted, very thick]coordinates{(15,30.46153846153846)(15,36)};\addplot[->, style=dotted, very thick]coordinates{(16,30.46153846153846)(16,36)};\addplot[->, style=dotted, very thick]coordinates{(18,30.46153846153846)(18,36)};\addplot[->, style=dotted, very thick]coordinates{(19,30.46153846153846)(19,36)};\addplot[->, style=dotted, very thick]coordinates{(20,30.46153846153846)(20,36)};\addplot[->, style=dotted, very thick]coordinates{(21,30.46153846153846)(21,36)};\addplot [pattern = north east lines, draw=white]coordinates{(21,30)(26,30)(26,36)(21,36)(21,30)};\addplot[only marks] coordinates{(0,0)(6,12)(6,15)(6,16)(6,18)(6,19)(6,20)(6,22)(6,23)(6,24)(6,25)(6,26)(6,27)(6,28)(6,29)(6,30)(9,15)(9,18)(9,25)(12,18)(12,25)(13,22)(13,25)(13,29)(15,19)(15,25)(16,19)(16,25)(17,19)(17,24)(17,25)(18,19)(19,19)(20,19)(21,19)(21,29)};\addplot[only marks,mark=o, mark options={scale=2.7}] coordinates{(3,4)(6,30)(7,8)(10,15)(14,18)(17,25)(21,12)(21,19)(21,22)(21,29)};\addplot[only marks, mark=text, mark options={scale=1,text mark={\scalebox{.5}{$\eta$}}},text mark as node=true] coordinates{(6,8)(9,12)(10,12)(12,16)(13,16)(13,19)(14,16)(15,16)(15,20)(16,16)(16,20)(16,22)(16,23)(17,16)(17,20)(17,22)(17,23)(18,16)(18,20)(18,24)(19,16)(19,20)(19,24)(19,26)(19,27)(20,16)(20,20)(20,24)(20,26)(20,27)(20,28)(20,30)(21,16)(21,20)(21,24)(21,26)(21,27)(21,28)(21,30)};\addplot[only marks, mark=text, mark options={scale=1,text mark={\scalebox{.5}{$\eta$}}}, text mark as node=true] coordinates{(3,4)(7,8)(10,15)(12,12)(13,12)(13,18)(14,12)(14,18)(15,12)(16,12)(17,12)(18,12)(18,22)(19,12)(19,22)(20,12)(20,22)(21,12)(21,22)};\addplot[only marks] coordinates{(9,16)(9,19)(9,20)(9,22)(9,23)(9,24)(9,26)(9,27)(9,28)(9,29)(9,30)(12,19)(12,20)(12,22)(12,23)(12,24)(12,26)(12,27)(12,28)(12,29)(12,30)(13,20)(13,23)(13,24)(13,26)(13,27)(13,28)(13,30)(15,22)(15,23)(15,24)(15,26)(15,27)(15,28)(15,29)(15,30)(16,24)(16,26)(16,27)(16,28)(16,29)(16,30)(18,23)(18,26)(18,27)(18,28)(18,29)(18,30)(19,23)(19,28)(19,29)(19,30)(20,23)(20,29)(21,23)};\end{axis}\end{tikzpicture}

	\end{minipage}
	\caption{$\circ$: Reducible elements; $\bigcirc$: Irreducible Absolutes; $\eta$: Elements of $\sgen{\operatorname{red}(\eta)}$}
	\label{Figure 2}
\end{figure}
  
  Notice that, since $S$ contains only the tracks $T_1=T((3,4)), T_2=T((6,\infty),(7,8)), T_3=T((6,\infty),(10,15),(\infty,12))$ and $T_4=T((10,15),(\infty,12))$, we have that $\eta=\{ (3,4),(7,8),$ $(10,15), (14,18), (\infty,12), (\infty,22) \} $ is a HS for $S$. Let us show that $\operatorname{red}(\eta) \neq I_A(S)$. It suffices to show that $\red{\eta}=\eta$, i.e. all the elements in $I_A(S) \setminus \eta$ are not reducible by $\eta$.
  We have \begin{itemize}
      \item $ \bs{\alpha_1}=(6,\infty)$ is not reducible by $\eta$. Notice that there exists $(6,8)=2(3,4) \in {}_1\Delta^{\eta}(\bs{\alpha_1})$, thus we have ${}_1\delta^{\eta}(\bs{\alpha_1})=8$. Furthermore, $\tilde{y}=22$ and we have:
      \begin{eqnarray*}
      Y^{\eta}(\bs{\alpha_1})&=&\{y \in \{{}_1\delta^{\eta}(\bs{\alpha_1}),\ldots,\max\{\tilde{y},{}_1\delta^{\eta}(\bs{\alpha_1})\}+e_2-1| (6,y) \in S\}=\\
      &&=\{8,12,15,16,18,19,20,22,23,24,25\}.
      \end{eqnarray*}
      
      For each element $y$ in $Y^{\eta}(\bs{\alpha_1})$ we need to find $(x,y) \in \sgen{\eta}$  with $x>6$.
      It is not difficult to notice that for $y=25\in Y^{\eta}(\bs{\alpha_1})$, it is not possible to produce such an element in $\sgen{\eta}$.
       \item $ \bs{\alpha_2}=(17,25)$ is not reducible by $\eta$. Notice that there exists $(17,23)=(7,8)\odot (10,15) \in {}_1\Delta^{\eta}(\bs{\alpha_2})$, thus we have ${}_1\delta^{\eta}(\bs{\alpha_2})=23$ (while ${}_2\Delta^{\eta}(\bs{\alpha_2})=\emptyset$). Furthermore, ${}_1\delta^{S}(\bs{\alpha_2})=24$ and we have:
      $$Y^{\eta}(\bs{\alpha_2})=\{y \in \{{}_1\delta^{\eta}(\bs{\alpha_2}),\ldots,{}_1\delta^{S}(\bs{\alpha_2})=24| (17,y) \in S\}=\{23,24\}.$$
      For each element $y$ in $Y^{\eta}(\bs{\alpha_2})$ we need to find $(x,y) \in \sgen{\eta}$  with $x>17$.
      However for $y=23\in Y^{\eta}(\bs{\alpha_2})$, it is not possible to produce such an element in $\sgen{\eta}$
            \item $ \bs{\alpha_3}=(\infty,19)$ is not reducible by $\eta$. Notice that there exists $(13,19)=(3,4)\odot(10,15) \in {}_2\Delta^{\eta}(\bs{\alpha_3})$, thus we have ${}_2\delta^{\eta}(\bs{\alpha_3})=13$. Furthermore, $\tilde{x}=15$ and we have:
      $$X^{\eta}(\bs{\alpha_3})=\{x \in \{{}_2\delta^{\eta}(\bs{\alpha_3}),\ldots,\max\{\tilde{x},{}_2\delta^{\eta}(\bs{\alpha_3})\}+e_1-1| (x,19) \in S\}=\{13,15,16,17\}.$$
      For each element $x$ in $X^{\eta}(\bs{\alpha_3})$ we need to find $(x,y) \in \sgen{\eta}$  with $y>19$.
      It is not difficult to notice that for $x=13\in X^{\eta}(\bs{\alpha_3})$, it is not possible to do that.
            \item $ \bs{\alpha_4}=(\infty,29)$ is not reducible by $\eta$, since ${}_2\Delta^{\eta}(\bs{\alpha_4})=\emptyset.$
  \end{itemize}
  However it is possible to check that there are no good semigroups $S'$ such that $\eta \subseteq S' \subsetneq S$. Thus $\eta$ is actually a sor for $S$ and it is not difficult to check that the minimal hitting set  
  $M=\{(3,4), (7,8), (10,15)\}$  contained in it, is a \emph{sor} itself, thus  a \emph{msor} for $S$.
\end{ex}

Now we define: $\operatorname{Bedim}(S)=\min\{|\eta|, \eta\in \mathfrak{R}(S)\}$, 
\begin{comment}
\begin{cor}
\label{E22}
If $\eta \in \mathfrak{R}(S)$ is a \emph{sor} such that $|\eta|=\min\{|\xi|, \xi\in \mathfrak{R}(S)\}$, then $\eta$ is an \emph{msor}.
\end{cor} \end{comment}
\begin{cor}
\label{E23}
Given a good semigroup $S\subseteq \N^2$, $\operatorname{edim}(S)\leq \operatorname{Bedim}(S)$.
\end{cor}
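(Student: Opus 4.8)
The plan is to obtain the corollary directly from Theorem \ref{E21}, which carries all the substantive content; at the level of the corollary there is essentially nothing left to do beyond unwinding the two definitions of $\operatorname{Bedim}(S)$ and $\operatorname{edim}(S)$. First I would check that the minimum defining $\operatorname{Bedim}(S)=\min\{|\eta|, \eta\in\mathfrak{R}(S)\}$ is actually attained. This follows because $I_A(S)$ is finite (as noted after Remark \ref{E4}), so there are only finitely many candidate subsets $\eta\subseteq I_A(S)$, and the collection $\mathfrak{R}(S)$ is nonempty: indeed $\operatorname{red}(I_A(S))=I_A(S)$, since $\red{I_A(S)}$ is on one hand a subset of $I_A(S)$ and on the other hand contains $I_A(S)$ (all elements of a set are reducible by it, by convention), so Algorithm \ref{algo1} terminates at once on input $I_A(S)$. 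Hence $I_A(S)\in\mathfrak{R}(S)$ and the minimum is well defined and achieved by some $\eta$.

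Next I would fix such a minimizer $\eta\subseteq I_A(S)$, so that $\eta\in\mathfrak{R}(S)$ and $|\eta|=\operatorname{Bedim}(S)$. Applying Theorem \ref{E21} to $\eta$, I conclude that $\eta$ is a \emph{sor} of $S$, that is $S\in S_{\eta}$. Since moreover $\eta\subseteq I_A(S)$ by construction, the set $\eta$ is an admissible competitor in the minimization of Definition \ref{E14}, namely it is one of the sets $\eta'\subseteq I_A(S)$ with $S\in S_{\eta'}$.

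Finally, by the definition $\operatorname{edim}(S)=\min\{|\eta'|\,:\,S\in S_{\eta'}\text{ and }\eta'\subseteq I_A(S)\}$, the existence of the admissible competitor $\eta$ yields $\operatorname{edim}(S)\leq |\eta|=\operatorname{Bedim}(S)$, which is exactly the claimed inequality. I do not expect any genuine obstacle here: the corollary is the immediate consequence of the inclusion $\mathfrak{R}(S)\subseteq\{\text{sor of }S\}$ established in Theorem \ref{E21}, and the only point deserving explicit mention is the attainment of the minimum, handled above via the finiteness of $I_A(S)$ and the fact that $I_A(S)\in\mathfrak{R}(S)$.
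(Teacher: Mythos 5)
Your proof is correct and is exactly the argument the paper intends: the corollary is stated without proof because it follows immediately from Theorem \ref{E21} (every $\eta\in\mathfrak{R}(S)$ is a \emph{sor}, hence a competitor in the minimum defining $\operatorname{edim}(S)$). Your additional remark that the minimum is attained, via finiteness of $I_A(S)$ and the fact that $I_A(S)\in\mathfrak{R}(S)$, is a sound (if routine) point that the paper leaves implicit.
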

\begin{ex}
The inequality in Corollary \ref{E23} can be strict. An example of this behaviour is the good semigroup $S$ with the following set of irreducible absolute elements:
\begin{eqnarray*}I_A(S)&=&\{  (7, 7) , (14, 20), (17, 14),  (24, \infty), (25, 21),  (32, 30), (39, 45), 
  (42,\infty), (43, 35), (44,37),\\ && (46,\infty), (47,50),(50, \infty), (54, \infty), (\infty, 32), (\infty, 34),
  (\infty,42),(\infty, 51),
  (\infty,57) \}. \end{eqnarray*}
  It is possible to prove that for each MHS
$\eta$ of $S$ we have that $\eta$ is a sor for $S$ but $\operatorname{red}(\eta)\neq I_A(S).$ This easily implies that $\operatorname{edim}(S)< \operatorname{Bedim}(S).$ \end{ex}
\begin{ex} \label{dmsor}
Let us consider the good semigroup $S$, with 
\begin{eqnarray*}I_A(S)&=&\{  (4, 3), (6, 7), (8, 8), (9, 6), (11, \infty), (12, \infty), (13, \infty ), ( 14,\infty),
  ( \infty, 9 ), (\infty, 11), (\infty, 13) \}. \end{eqnarray*}
  This is an example of good semigroup having \emph{msor} with distinct cardinalities. In fact, it is possible to prove that the sets $\eta_1=\{ (4, 3), (6, 7), (8, 8), (11,\infty) , ( 13, \infty) \}$ and $\eta_2=\{ (4, 3), (6, 7), (8, 8),$ $(11,\infty) , (  \infty,9), (\infty,11) \}$
 are both MHS of $S$ satisfying the reducibility condition. In particular $\operatorname{edim}(S)=5$.\end{ex}
\subsection{An algorithm for the computation of the embedding dimension of a semigroup $S\subseteq \N^2$}
\label{section23}
We will conclude this section presenting an algorithm for the computation of the embedding dimension and with some remarks concerning the definition that we have given.

We proved that:
$$\operatorname{bedim}(S)\leq \operatorname{edim}(S)\leq\operatorname{Bedim}(S)$$
and both inequalities are sharp as we will see in Example \ref{E31}. 

We implemented in GAP \cite{GAP4} the following functions:

\begin{itemize}
    \item ComputeMHS($S$): it takes in input a good semigroup and returns the set of its MHS.\\
    \item VerifyReducibility(list): it takes in input a list of subsets of $I_A(S)$ and returns the first set that satisfy the condition of reducibility if there exists, otherwise it returns "fail".\\
    \item IsThereAMGSContainedInAndContaining($S$,$V$): it takes in input a good semigroup $S$ and a subset  $V$ of $I_A(S)$ and returns "true" if there exists a good semigroup $S'$ such that $V\subseteq S'\subsetneq S$\\
\end{itemize}

\begin{oss}
\label{speed}
Testing in GAP these functions on a sample of about $200000$ semigroups, we observed empirically that VerifyReducibility is about seventy times faster than IsThereAMGSContainedInAndContaining.
\end{oss}
We introduce the following algorithm to compute the embedding dimension and a set of representatives with minimal cardinality.\\

\begin{algorithm} 
	\SetKwData{Left}{left}
	\SetKwData{This}{this}
	\SetKwData{Up}{up}
	\SetKwFunction{Union}{Union}
	\SetKwFunction{FindCompress}{FindCompress}
	\SetKwInOut{Input}{input}
	\SetKwInOut{Output}{output}
	\Input{A good Semigroup $S$}
	\Output{A minimal system of representatives of minimal cardinality}
	\BlankLine
	$\mathfrak{M}\longleftarrow ComputeMHS(S)$\\
	$\mathfrak{H}\longleftarrow \mathfrak{M}$ \\
	$n\longleftarrow \operatorname{bedim}(S)$ \\
Stop $\longleftarrow $ false \\
	\While{Stop=false} 
	{
	$\mathfrak{H}\longleftarrow \{\eta \subseteq I_A(S)\hspace{0.1cm}|\hspace{0.1cm} |\eta|=n$ \textrm{ and } $H \subseteq \eta $ \textrm{ for some } $H\in\mathfrak{H} \} \cup \{ \eta \in \mathfrak{M} | |\eta|=n \}$\\
	\If{VerifyReducibility($\mathfrak{H}$)=$\eta$}
	{Stop=true, \Return $\eta$\\}
	\If{VerifyReducibility($\mathfrak{H}$)=fail}{\eIf
	{ForAny $\eta \in \mathfrak{H}$, IsThereAMGSContainedInAndContaining(S,$\eta$)=false}
	{Stop=true, \Return $\eta$\\}
	{$n\longleftarrow n+1$\\}
	}
	}
	\label{algo4}
	\caption{Algorithm to find an \emph{msor} of minimal cardinality}
\end{algorithm}

\begin{oss}
We tested the algorithm on a sample of $200000$ good semigroups and we noticed that, for $n=\operatorname{bedim}(S)$:
\begin{itemize}
\item The condition "\emph{VerifyReducibility}($\mathfrak{H})=fail$" occurred only in $82$ cases.
\item Both the conditions "\emph{VerifyReducibility}($\mathfrak{H})=fail$" and "\emph{IsThereAMGSContainedInAndContaining}$(S,\eta)=true$ for all $\eta \in \mathfrak{H}$" occurred only in $2$ cases. In these cases $\operatorname{bedim}(S)\neq \operatorname{edim}(S)$.
\item The situation which all MSH of minimal cardinality are not reducible and at least one of them is a \emph{sor} occurred only in one case. In this case $\operatorname{Bedim}(S)\neq \operatorname{edim}(S)$).
\end{itemize}
For this reasons and by Remark \ref{speed} this algorithm is considerably faster than to computing the embedding dimension using the definition. 
\end{oss}

\begin{ex} \label{E31}
\label{exalg}
	Let us consider the good semigroup $S$, represented in Figure \ref{algfig2}, we want to find a \emph{msor} for $S$ and the embedding dimension of $S$.

We have that $$I_A(S)= \{ (4, 3), (7, 13), (11, 17), (14, \infty), (15, \infty), (16, 20),  (24, \infty), (\infty, 12),
(\infty, 16), (\infty, 26) \}. $$

First of all we need to compute the minimal hitting sets of $S$. It contains the following tracks:
\begin{itemize}
    \item $T_1=T((4,3));$
    \item $T_2=T((7,13));$
    \item $T_3=T((11,17),(\infty,16));$
    \item $T_4=T((15,\infty),(16,20),(\infty,12));$
    \item $T_5=T((15,\infty),(16,20),(\infty,16));$
    \item $T_6=T((24,\infty),(\infty,26)).$
\end{itemize}
Thus the following is the complete list of the MHS of $S$.
\begin{itemize}
	\item $\eta_1=\{ ( 4, 3 ), ( 7, 13 ), ( \infty, 12 ), ( \infty, 16 ), ( \infty, 26 ) \};$
	\item $\eta_2=
	\{ ( 4, 3 ), ( 7, 13 ), ( 11, 17 ), ( 15, \infty ), ( \infty, 26 ) \}$;
	\item $\eta_3=\{ ( 4, 3 ), ( 7, 13 ), ( 11, 17 ), ( 16, 20 ), ( 24, \infty ) \}$;
	\item	$\eta_4=\{ ( 4, 3 ), ( 7, 13 ), ( 11, 17 ), ( 16, 20 ), ( \infty, 26 ) \}$;
	\item	$\eta_5=\{ ( 4, 3 ), ( 7, 13 ), ( 15, \infty ), ( 24, \infty ), ( \infty, 16 ) \}$;
	\item $\eta_6=\{ ( 4, 3 ), ( 7, 13 ), ( 15, \infty ), ( \infty, 16 ), ( \infty, 26 ) \}$;
	\item $	\eta_7=\{ ( 4, 3 ), ( 7, 13 ), ( 16, 20 ), ( 24, \infty ), ( \infty, 16 ) \}$;
	\item $\eta_8=\{ ( 4, 3 ), ( 7, 13 ), ( 16, 20 ), ( \infty, 16 ), ( \infty, 26 ) \}$;
	\item $\eta_9=\{ ( 4, 3 ), ( 7, 13 ), ( 24, \infty ), ( \infty, 12 ), ( \infty, 16 ) \}$;
	\item $\eta_{10}=\{ 	( 4, 3 ), ( 7, 13 ), ( 11, 17 ), ( 15, \infty ), ( 24, \infty )  \}$. 

\end{itemize}
\begin{figure}[!h]

	\begin{minipage}{0.5 \textwidth}
\tikzset{mark size=1}\begin{tikzpicture}\begin{axis}[grid=major, xmin=0, ymin=0, xmax=30, ymax=33, ytick={0,3,12,13,15,16,17,20,24,26,27}, xtick={0,4,7,11,14,16,21,25}, yticklabel style={font=\tiny}, xticklabel style={font=\tiny}]\addplot[->, style=dotted, very thick]coordinates{(25.45454545454545,12)(30,12)};\addplot[->, style=dotted, very thick]coordinates{(25.45454545454545,15)(30,15)};\addplot[->, style=dotted, very thick]coordinates{(25.45454545454545,16)(30,16)};\addplot[->, style=dotted, very thick]coordinates{(25.45454545454545,18)(30,18)};\addplot[->, style=dotted, very thick]coordinates{(25.45454545454545,19)(30,19)};\addplot[->, style=dotted, very thick]coordinates{(25.45454545454545,21)(30,21)};\addplot[->, style=dotted, very thick]coordinates{(25.45454545454545,22)(30,22)};\addplot[->, style=dotted, very thick]coordinates{(25.45454545454545,24)(30,24)};\addplot[->, style=dotted, very thick]coordinates{(25.45454545454545,25)(30,25)};\addplot[->, style=dotted, very thick]coordinates{(25.45454545454545,26)(30,26)};\addplot[->, style=dotted, very thick]coordinates{(25.45454545454545,27)(30,27)};\addplot[->, style=dotted, very thick]coordinates{(14,27.45762711864407)(14,33)};\addplot[->, style=dotted, very thick]coordinates{(15,27.45762711864407)(15,33)};\addplot[->, style=dotted, very thick]coordinates{(18,27.45762711864407)(18,33)};\addplot[->, style=dotted, very thick]coordinates{(19,27.45762711864407)(19,33)};\addplot[->, style=dotted, very thick]coordinates{(21,27.45762711864407)(21,33)};\addplot[->, style=dotted, very thick]coordinates{(22,27.45762711864407)(22,33)};\addplot[->, style=dotted, very thick]coordinates{(23,27.45762711864407)(23,33)};\addplot[->, style=dotted, very thick]coordinates{(24,27.45762711864407)(24,33)};\addplot[->, style=dotted, very thick]coordinates{(25,27.45762711864407)(25,33)};\addplot [pattern = north east lines, draw=white]coordinates{(25,27)(30,27)(30,33)(25,33)(25,27)};\addplot[only marks] coordinates{(0,0)(4,3)(7,6)(7,9)(7,12)(7,13)(11,17)(14,16)(14,20)(14,23)(14,27)(15,16)(15,21)(15,22)(15,23)(15,24)(15,25)(15,26)(15,27)(16,15)(16,16)(16,18)(16,19)(16,20)(18,12)(18,16)(19,12)(19,16)(20,12)(20,16)(21,12)(21,15)(21,16)(21,19)(22,12)(22,16)(23,12)(23,16)(24,12)(24,16)(24,27)(25,12)(25,16)(25,26)};\addplot[only marks,mark=o, mark options={scale=2.7}] coordinates{(4,3)(7,13)(11,17)(14,27)(15,27)(16,20)(24,27)(25,12)(25,16)(25,26)};\addplot[only marks, mark=o] coordinates{(8,6)(11,9)(11,12)(11,15)(11,16)(12,9)(14,12)(14,15)(14,18)(14,19)(14,21)(14,22)(14,24)(14,25)(14,26)(15,12)(15,15)(15,18)(15,19)(15,20)(16,12)(18,15)(18,18)(18,19)(18,21)(18,22)(18,23)(18,24)(18,25)(18,26)(18,27)(19,15)(19,18)(19,19)(19,21)(19,22)(19,23)(19,24)(19,25)(19,26)(19,27)(20,15)(20,18)(20,19)(20,21)(20,22)(20,23)(21,18)(21,21)(21,22)(21,24)(21,25)(21,26)(21,27)(22,15)(22,18)(22,19)(22,21)(22,22)(22,24)(22,25)(22,26)(22,27)(23,15)(23,18)(23,19)(23,21)(23,22)(23,24)(23,25)(23,26)(23,27)(24,15)(24,18)(24,19)(24,21)(24,22)(24,24)(24,25)(24,26)(25,15)(25,18)(25,19)(25,21)(25,22)(25,24)(25,25)(25,27)};\end{axis}\end{tikzpicture}
	\end{minipage}
	\hspace{0.5cm}
	\begin{minipage}{0.5 \textwidth}
\tikzset{mark size=1}\begin{tikzpicture}\begin{axis}[grid=major, xmin=0, ymin=0, xmax=30, ymax=33,ytick={0,3,12,13,15,16,17,20,24,26,27}, xtick={0,4,7,11,14,16,21,25}, yticklabel style={font=\tiny}, xticklabel style={font=\tiny}]\addplot[->, style=dotted, very thick]coordinates{(25.45454545454545,12)(30,12)};\addplot[->, style=dotted, very thick]coordinates{(25.45454545454545,15)(30,15)};\addplot[->, style=dotted, very thick]coordinates{(25.45454545454545,16)(30,16)};\addplot[->, style=dotted, very thick]coordinates{(25.45454545454545,18)(30,18)};\addplot[->, style=dotted, very thick]coordinates{(25.45454545454545,19)(30,19)};\addplot[->, style=dotted, very thick]coordinates{(25.45454545454545,21)(30,21)};\addplot[->, style=dotted, very thick]coordinates{(25.45454545454545,22)(30,22)};\addplot[->, style=dotted, very thick]coordinates{(25.45454545454545,24)(30,24)};\addplot[->, style=dotted, very thick]coordinates{(25.45454545454545,25)(30,25)};\addplot[->, style=dotted, very thick]coordinates{(25.45454545454545,26)(30,26)};\addplot[->, style=dotted, very thick]coordinates{(25.45454545454545,27)(30,27)};\addplot[->, style=dotted, very thick]coordinates{(14,27.45762711864407)(14,33)};\addplot[->, style=dotted, very thick]coordinates{(15,27.45762711864407)(15,33)};\addplot[->, style=dotted, very thick]coordinates{(18,27.45762711864407)(18,33)};\addplot[->, style=dotted, very thick]coordinates{(19,27.45762711864407)(19,33)};\addplot[->, style=dotted, very thick]coordinates{(21,27.45762711864407)(21,33)};\addplot[->, style=dotted, very thick]coordinates{(22,27.45762711864407)(22,33)};\addplot[->, style=dotted, very thick]coordinates{(23,27.45762711864407)(23,33)};\addplot[->, style=dotted, very thick]coordinates{(24,27.45762711864407)(24,33)};\addplot[->, style=dotted, very thick]coordinates{(25,27.45762711864407)(25,33)};\addplot [pattern = north east lines, draw=white]coordinates{(25,27)(30,27)(30,33)(25,33)(25,27)};\addplot[only marks] coordinates{(0,0)(11,17)(14,20)(14,23)(14,27)(15,21)(15,22)(15,23)(15,24)(15,25)(15,26)(15,27)(16,15)(16,16)(16,18)(16,19)(16,20)(20,16)(24,27)};\addplot[only marks,mark=o, mark options={scale=2.7}] coordinates{(4,3)(7,13)(11,17)(14,27)(15,27)(16,20)(24,27)(25,12)(25,16)(25,26)};\addplot[only marks, mark=text, mark options={scale=1,text mark={\scalebox{.5}{$\eta$}}},
	text mark as node=true] coordinates{(8,6)(11,9)(11,12)(11,15)(11,16)(12,9)(14,12)(14,15)(14,18)(14,19)(14,21)(14,22)(14,24)(14,25)(14,26)(15,12)(15,15)(15,18)(15,19)(16,12)(18,15)(18,18)(18,19)(18,21)(18,22)(18,24)(18,25)(18,26)(18,27)(19,15)(19,18)(19,19)(19,21)(19,22)(20,15)(21,18)(21,21)(21,22)(21,24)(21,25)(21,26)(21,27)(22,15)(22,18)(22,19)(22,21)(22,22)(22,24)(22,25)(22,26)(22,27)(23,15)(23,18)(23,19)(23,21)(23,22)(23,24)(23,25)(24,15)(24,18)(25,15)(25,18)(25,19)(25,21)(25,22)(25,24)(25,25)(25,27)};\addplot[only marks, mark=text, mark options={scale=1,text mark={\scalebox{.5}{$\eta$}}}, text mark as node=true] coordinates{(4,3)(7,6)(7,9)(7,12)(7,13)(14,16)(15,16)(18,12)(18,16)(19,12)(19,16)(20,12)(21,12)(21,15)(21,16)(21,19)(22,12)(22,16)(23,12)(23,16)(24,12)(24,16)(25,12)(25,16)(25,26)};\addplot[only marks] coordinates{(15,20)(18,23)(19,23)(19,24)(19,25)(19,26)(19,27)(20,18)(20,19)(20,21)(20,22)(20,23)(23,26)(23,27)(24,19)(24,21)(24,22)(24,24)(24,25)(24,26)};\end{axis}\end{tikzpicture}

	\end{minipage}
	\caption{$\circ$: Reducible elements; $\bigcirc$: Irreducible Absolutes; $\eta$: Elements of $\sgen{\operatorname{red}(\eta)}$}
\label{algfig2}
\end{figure}
Thus for this semigroup $\operatorname{bedim}(S)=5.$
We consider $\eta=\eta_1=\{ ( 4, 3 ), ( 7, 13 ), ( \infty, 12 ), ( \infty, 16 ),$ $( \infty, 26 ) \}$ and we want to show that $\eta \in \mathfrak{R}(S)$.

We have $ \eta^1=\red{\eta}=\{ ( 4, 3 ), ( 7, 13 ), (11,17), (14,\infty), (16,20), (24,\infty), ( \infty, 12 ), ( \infty, 16 ),$ $( \infty, 26 ) \}$.

In fact 
\begin{itemize} 
\item $\bs{\alpha_1}=(11,17)$ is reducible by $\eta$ because we have
 ${}_1\Delta^{\eta}(\bs{\alpha_1}) \neq \emptyset$ since $(4,3)\odot(7,13)=(11,16) \in \sgen{\eta}$. Furthermore ${}_1\delta^{\eta}(\bs{\alpha_1})=16$.

Since ${}_1\delta^{S}(\bs{\alpha_1})=16$ we need only to find an element of the type $(x,16) \in \sgen{\eta}$ with $x>11$. The element $(\infty,16) \in \eta$ satisfies this property.
\item $\bs{\alpha_2}=(14,\infty)$ is reducible by $\eta$ because we have	 ${}_1\Delta^{\eta}(\bs{\alpha_2}) \neq \emptyset$; in fact we have $2(7,13)=(14,26) \in \sgen{\eta}$. Furthermore ${}_1\delta^{\eta}(\bs{\alpha_2})=26$.

 Since $\tilde{y}=18$, for all 
 \begin{eqnarray*}
 y \in Y^{\eta}(\bs{\alpha_2})&=&\{y \in \{{}_1\delta^{\eta}(\bs{\alpha_2})=26,\ldots,\max\{\tilde{y},{}_1\delta^{\eta}(\bs{\alpha_2})\}+e_2-1=28| (14,y) \in S\}=\\
 &=&\{ 26,27,28\},
 \end{eqnarray*} we need  to find an element of the type $(x,y) \in \sgen{\eta}$ with $x>14$. The following elements of $\sgen{\eta}$ satisfy this property:
	$$ (\infty,26), \quad 9(4,3)=(36,27), \quad 5(4,3)\odot(7,13)=(27,28). 
	$$
	\item $\bs{\alpha_3}=(16,20)$ is reducible by $\eta$. In fact we have ${}_1\Delta^{\eta}(\bs{\alpha_3}) \neq \emptyset$; since $4(4,3)=(16,12) \in \sgen{\eta}$. Furthermore ${}_1\delta^{\eta}(\bs{\alpha_3})=12$.

 Since ${}_1\delta^{S}(\bs{\alpha_3})=19$, for all $y \in Y^{\eta}(\bs{\alpha_3})=\{12,15,16,18,19\}$ we need  to find an element of the type $(x,y) \in \sgen{\eta}$ with $x>16$.  The following elements of $\sgen{\eta}$ satisfy this property:
	$$ (\infty,12), \quad 5(4,3)=(20,15), \quad (\infty,16), \quad 6(4,3)=(24,18), \quad  (4,3)\odot (\infty,16)=(\infty,19). $$
	\item $\bs{\alpha_4}=(24,\infty)$ is reducible by $\eta$. In fact ${}_1\Delta^{\eta}(\bs{\alpha_4})\neq \emptyset$ since $6(4,3)=(24,18) \in \sgen{\eta}$. Thus ${}_1\delta^{\eta}(\bs{\alpha_4})=18$.
 Since $\tilde{y}=24$, for all 
 \begin{eqnarray*} y \in Y^{\eta}(\bs{\alpha_4})&=&\{y \in \{{}_1\delta^{\eta}(\bs{\alpha_4})=18,\ldots,\max\{\tilde{y},{}_1\delta^{\eta}(\bs{\alpha_4})\}+e_2-1=26| (24,y) \in S\}=\\
 	&=&\{18, 19, 21, 22, 24, 25, 26\},
 \end{eqnarray*}
 	 we need  to find an element of the type $(x,y) \in \sgen{\eta}$ with $x>24$. The following elements of $\sgen{\eta}$ satisfy this property:
	\begin{eqnarray*} 2(4,3)\odot (\infty,12)&=&(\infty,18) \quad (4,3) \odot (\infty,16)=(\infty,19), \quad 3(4,3) \odot (\infty,12)=(\infty,21),  \\2(4,3)\odot(\infty,16)&=&(\infty,22), \quad 2(\infty,12)=(\infty,24), \quad (7,13)\odot (\infty,12)=(\infty,25), \quad (\infty,26) . \end{eqnarray*}
\end{itemize}

Notice that $\bs{\alpha_5}=(15,\infty)$ is not reducible by $\eta$, but it is reducible by $\eta^1$. 
In fact ${}_1\Delta^{\eta^1}(\bs{\alpha_5}) \neq \emptyset$ since $(4,3)\odot(11,17)=(15,20) \in \sgen{\eta^1}$. Thus ${}_1\delta^{\eta^1}(\bs{\alpha_5}) =20$.
	Since $\tilde{y}=18$, for all 
	\begin{eqnarray*}
		y \in Y^{\eta^1}(\bs{\alpha_5})&=&\{y \in \{{}_1\delta^{\eta^1}(\bs{\alpha_5})=20,\ldots,\max\{\tilde{y},{}_1\delta^{\eta^1}(\bs{\alpha_5})\}+e_2-1=22| (14,y) \in S\}=\\
		=\{ 20, 21, 22\}, 
		\end{eqnarray*}
	we need  to find an element of the type $(x,y) \in \sgen{\eta^1}$ with $x>15$. The following elements of $\sgen{\eta^1}$ satisfy this property:
	$$ (16,20), \quad 3(4,3)\odot(\infty,12)=(\infty,21), \quad 3(4,3)\odot(7,13)=(19,22). $$

Thus $\red{\eta^1}=I_A(S)$, and this means $\eta \in \mathfrak{R}(S)$ since $\operatorname{red}(\eta)=I_A(S).$
Hence $\operatorname{Bedim}(S) \leq 5=|\eta|.$
Since we have $$ 5=\operatorname{bedim}(S) \leq \operatorname{edim}(S) \leq \operatorname{Bedim}(S) \leq 5,$$
we can finally deduce that $\operatorname{edim}(S)=5$ and $\eta$ is an \emph{msor}.

It is possible to check that all the minimal hitting sets previously found satisfy the reducibility condition, thus they are all \emph{msor} for $S$.
\end{ex}
All the previous computations were realized implementing all the previous algorithms in GAP \cite{GAP4}.
	
\section{Properties of embedding dimension}
\label{section3}
\subsection{Relationship between embedding dimension of a ring and embedding dimension of its value semigroup}
\label{section31}
\begin{teo}
\label{ring}
	Let $S$ be a good semigroup of $\N^2$ such that there exists an algebroid curve $R$ with $v(R)=S$. Then $\operatorname{edim}(S) \geq \operatorname{edim}(R)$.
\end{teo}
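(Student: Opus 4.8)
The plan is to lift a minimal system of representatives of $S$ to a generating set of the maximal ideal $\mathfrak m$ of $R$, thereby bounding $\operatorname{edim}(R)=\dim_{\K}\mathfrak m/\mathfrak m^2$ from above by $\operatorname{edim}(S)$. So let $\eta=\{\bs{\gamma}_1,\dots,\bs{\gamma}_m\}\subseteq I_A(S)$ be a \emph{msor} of $S$ with $m=\operatorname{edim}(S)$. For each finite $\bs{\gamma}_j\in I_{A_f}(S)$ I would choose a non-zerodivisor $g_j\in R$ with $v(g_j)=\bs{\gamma}_j$ (possible since $\bs{\gamma}_j\in S=v(R)$), and for each infinite $\bs{\gamma}_j$ choose $g_j\in R$ with $v(g_j)=\bs{\gamma}_j$, which exists because $\Gamma_S=\Gamma_R$ by Proposition \ref{E2bis}. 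Since $S$ is local and $\bs{\gamma}_j\neq\bs{0}$, every $g_j$ lies in $\mathfrak m$.

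Next I would form the complete local subring $A=\K[\![g_1,\dots,g_m]\!]\subseteq R$ and prove $\sgen{\eta}\subseteq v(A)\subseteq S$. The inclusion for tropical products is immediate from $v(g_ig_j)=\bs{\gamma}_i\odot\bs{\gamma}_j$; for the tropical sum, given $p,q\in A$ I would take a generic $\lambda\in\K$ so that in each coordinate the leading terms of $p$ and $\lambda q$ do not cancel, yielding $v(p+\lambda q)=\min\{v(p),v(q)\}=v(p)\oplus v(q)$ (only finitely many $\lambda$ are excluded, one per coordinate where the two values agree, and $\K$ is infinite). Since $A\subseteq R$ is a reduced, complete, one-dimensional local ring whose value set already contains elements of $\sgen{\eta}$ arbitrarily large in both coordinates, $A$ has two branches and $\overline{R}=\overline{A}$; hence $v(A)$ is a good semigroup by \cite{anal:unr}.

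Now comes the decisive step: from $\eta\subseteq v(A)\subseteq S$ together with the fact that $\eta$ is a \emph{sor} (so that $S$ is minimal among good semigroups containing $\eta$), I conclude $v(A)=S$. Then $A$ and $R$ share the same value semigroup and therefore the same conductor ideal $\mathcal{C}=(t^{c_1},u^{c_2})\overline{R}$, where $\bs{c}=(c_1,c_2)$ is the conductor of $S$. Using the classical fact that for algebroid curves $\dim_{\K}(\overline{R}/R)$ depends only on $v(R)$, both $\dim_{\K}(R/\mathcal{C})$ and $\dim_{\K}(A/\mathcal{C})$ equal $(c_1+c_2)-\dim_{\K}(\overline{R}/A)$ and so coincide. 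As $\mathcal{C}\subseteq A\subseteq R$, the injection $A/\mathcal{C}\hookrightarrow R/\mathcal{C}$ is an equality of finite-dimensional $\K$-spaces, forcing $A+\mathcal{C}=R$ and hence $A=R$. Since $R=A=\K[\![g_1,\dots,g_m]\!]$ is complete local, $\mathfrak m$ is then generated by $g_1,\dots,g_m$, giving $\operatorname{edim}(R)\leq m=\operatorname{edim}(S)$.

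The main obstacle I expect lies at the interface between the combinatorics and the ring theory: verifying carefully that $v(A)$ is genuinely a \emph{good} semigroup (so that the minimality of $S$ as a \emph{sor} may be invoked) and justifying the length identity $\dim_{\K}(R/\mathcal{C})=\dim_{\K}(A/\mathcal{C})$, which is what upgrades the equality of value semigroups $v(A)=v(R)$ to the ring equality $A=R$. The tropical-sum realization $v(p+\lambda q)=v(p)\oplus v(q)$ for generic $\lambda$ is routine but must be stated with care, as it is precisely the point where the cancellation of leading coefficients is controlled.
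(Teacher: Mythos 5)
Your overall strategy is the same as the paper's: lift a \emph{msor} $\eta$ of $S$ to elements $g_1,\dots,g_m\in R$, form the complete local subring $A=\K[\![g_1,\dots,g_m]\!]$, show that $v(A)$ is a good semigroup squeezed between $\eta$ and $S$, invoke the minimality in the definition of \emph{sor} to get $v(A)=S$, upgrade this to $A=R$, and conclude $\operatorname{edim}(R)\leq m$. The genuine gap sits exactly at the step you yourself flag as the crux: the claim that $v(A)$ is a good semigroup. Your justification --- that the value set of $A$ contains elements of $\sgen{\eta}$ arbitrarily large in both coordinates, hence ``$A$ has two branches and $\overline{A}=\overline{R}$'' --- is not a valid implication. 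The diagonal subring $\K[\![(t_1,t_2)]\!]\subseteq\K[\![t_1]\!]\times\K[\![t_2]\!]$ has value set $\{(n,n):n\in\N\}$, which is arbitrarily large in both coordinates, yet it is a one-branch domain whose value set contains no set of the form $\bs{\delta}+\N^2$: property (G2) fails, and $\overline{A}\neq\K[\![t_1]\!]\times\K[\![t_2]\!]$. For a subring $A\subseteq\K[\![t_1]\!]\times\K[\![t_2]\!]$ over an infinite field, (G1) and (G3) do hold automatically for $v(A)$ (by the generic-$\lambda$ argument you describe), but (G2) is precisely what can fail for unlucky lifts; nothing in your argument rules out, for instance, that $A$ is a domain when all elements of $\eta$ are finite vectors, which can indeed happen for a \emph{msor} (cf.\ Example \ref{sornotred}, where $\{(3,4),(7,8),(10,15)\}$ is a \emph{msor}). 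Without (G2), $v(A)$ is not good, the minimality of $S$ in $S_{\eta}$ cannot be applied, and the proof collapses.

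The paper closes this gap with an explicit device you are missing: it perturbs the lifts, replacing $\phi_i$ by $\phi_i+r_i$ with $r_i\in R$ of value larger than the conductor of $S$ (so that $v(\phi_i+r_i)=\bs{\alpha_i}$ is unchanged), chosen so that $v(R_1)$ contains vectors fulfilling the hypotheses of Proposition \ref{boundcond}; since the proof of that proposition only uses (G1), (G3) and the presence of such vectors, it then manufactures a conductor for $v(R_1)$. You would need this perturbation, or a proof that $v(A)$ automatically contains vectors satisfying those gcd-type conditions, to make your argument complete. By contrast, your final step is a legitimate alternative to the paper's: where the paper proves $R_1=R$ by successive approximation inside the complete ring (subtracting scalar multiples to raise values until they exceed the conductor), you deduce $A=R$ from $\mathcal{C}\subseteq A\subseteq R$ and the classical fact that $\dim_{\K}(R/\mathcal{C})$ depends only on the value semigroup. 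Both routes are standard and correct, so the conductor step is the only real defect in the proposal.
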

\begin{proof}
	Let us consider an algebroid curve $R$ such that $v(R)=S$ and denote by $\varepsilon$ the embedding dimension of $S$.	Thus there exists $\eta \subset I_A(S) $, \emph{msor} of $S$, with $| \eta |=\varepsilon$. We want to prove $\operatorname{edim}(R) \leq \varepsilon.$

	We denote  by
	$$
	\eta=\{ \bs{\alpha_1},\ldots,\bs{\alpha_{\varepsilon}}\},$$
	and we want to show that it is possible to choose elements $\phi_1,\ldots,\phi_{\varepsilon}$ in $R$, such that: \begin{itemize} \item  $v(\phi_j)=\bs{\alpha_j}$ for each $j=1,\ldots,\varepsilon;$
	\item  $v(\K[\![\phi_1,\ldots,\phi_{\varepsilon}]\!])$ is a good semigroup. 
	\end{itemize}
	Denote by $R_1=\K[\![\phi_1,\ldots,\phi_{\varepsilon}]\!]$.
 By construction, for each choice of the elements $\phi_j$, the subsemigroup $v(R_1) \subseteq \mathbb{N}^2$ always satisfies the properties (G1) and (G3) of good semigroups, thus we need to guarantee the existence of a conductor.
	This can be done by forcing in $v$ the presence of vectors that fulfil the conditions of Proposition \ref{boundcond} (it is not difficult to do that by accordingly adding to the $\phi_i$ elements of $R$ with value greater than its conductor). 
	
	Now,  $$ \eta \subseteq v(R_1) \subseteq v(R)=S,$$
	and, since $\eta$ is a \emph{msor} of $S$ and $v(R_1)$ is a good semigroup, we have $v(R_1)=S$.
	Notice that $R_1 \subseteq R$ with $v(R)=v(R_1)$ implies that
	$R_1=R$.
	In fact, considered an element $r \in R$, there exists an element $r_1 \in R_1$ such that $v(r)=v(r_1)$. Thus we can fin a $k_1 \in \mathbb{K}$ such that $v(r-k_1r_1)$ is strictly greater than $v(r)$. We eventually find $k_j \in \K$ and $r_j \in R_1$ such that $v(r-\sum{k_jr_j}) \geq c(S)=c(v(R_1))$, implying that $r-\sum{k_jr_j} \in R_1$, and $r \in R_1$.
	
	Thus $ \operatorname{edim}(R)=\operatorname{edim}(R_1) \leq \varepsilon=\operatorname{edim}(S)$. \end{proof}

We want to show that the inequality can be strict and we want to analyze the cases when this happens.

\begin{ex}
\label{exring1}
Let us consider the ring $R\cong \K[\![(t^4,u^4),(t^6+t^9,u^6+u^7),(2t^{15}+t^{18},2u^{13}+u^{14})]\!]$.
We observe that $R=\K[\![(t^4,u^4),(t^6+t^9,u^6+u^7),(2t^{15}+t^{18 },2u^{13}+u^{14})]\!]=\K[\![(t^4,u^4),(t^6+t^9,u^6+u^7)]\!]$, in fact:
$$(2t^{15}+t^{18},2u^{13}+u^{14})=(t^6+t^9,u^6+u^7)^2-(t^4,u^4)^3.$$
We have that $\operatorname{edim}(R)=2$, but $\operatorname{edim}(v(R))=3$, since $M=\{(4,4),(6,6),(15,13)\}$ is the only hitting set of the semigroup $v(R)$ \\
This fact happens because in the ring $R$ the element of value $(15,13)$ is obtained by the sum of  the elements of value $(4,4)$ and $(6,6)$ because of a cancellation.\\
This situation cannot be controlled by the property (G3) of the good semigroups. This gap in embedding dimension can be justified by the fact that this piece of information is lost in the passage from the ring to the semigroup.
For this value semigroup it is possible to find a  ring, namely $T=\K[\![(t^4,u^4),(t^6,u^6),(t^{15},u^{13})]\!]$ with $v(T)=v(R)$, and such that $\operatorname{edim}(T)=\operatorname{edim}(v(T))$. This situation is not guaranteed to happen in general, as it is shown in the following example. \\
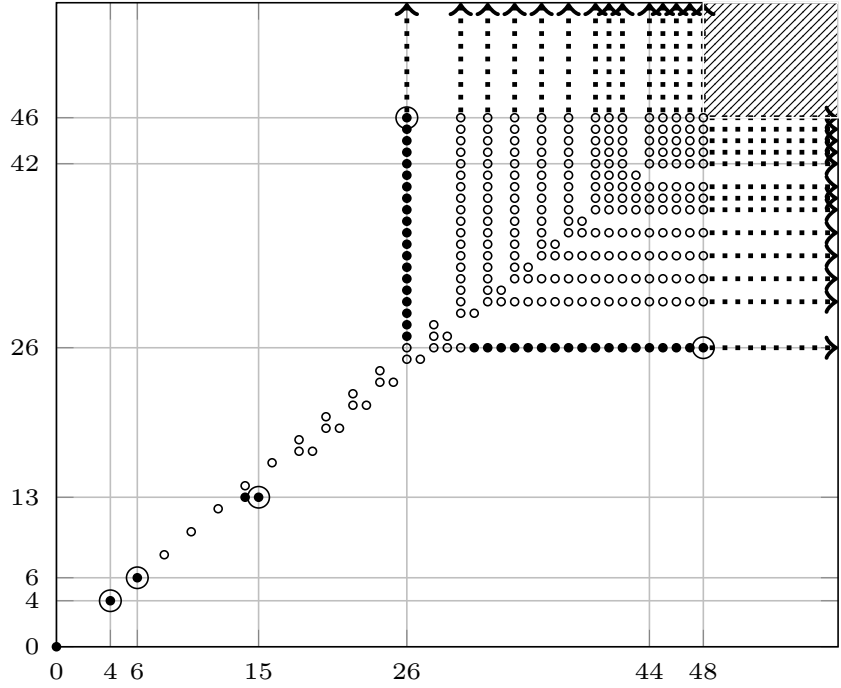
\begin{figure}
\centering
\tikzset{mark size=1}
\begin{tikzpicture}[scale=1.5]\begin{axis}[grid=major, xmin=0, ymin=0, xmax=58, ymax=56, ytick={0,4,6,13,26,42,46}, xtick={0,4,6,15,26,44,48}, yticklabel style={font=\tiny}, xticklabel style={font=\tiny}]\addplot[->, style=dotted, very thick]coordinates{(48.47524752475248,26)(58,26)};\addplot[->, style=dotted, very thick]coordinates{(48.47524752475248,30)(58,30)};\addplot[->, style=dotted, very thick]coordinates{(48.47524752475248,32)(58,32)};\addplot[->, style=dotted, very thick]coordinates{(48.47524752475248,34)(58,34)};\addplot[->, style=dotted, very thick]coordinates{(48.47524752475248,36)(58,36)};\addplot[->, style=dotted, very thick]coordinates{(48.47524752475248,38)(58,38)};\addplot[->, style=dotted, very thick]coordinates{(48.47524752475248,39)(58,39)};\addplot[->, style=dotted, very thick]coordinates{(48.47524752475248,40)(58,40)};\addplot[->, style=dotted, very thick]coordinates{(48.47524752475248,42)(58,42)};\addplot[->, style=dotted, very thick]coordinates{(48.47524752475248,43)(58,43)};\addplot[->, style=dotted, very thick]coordinates{(48.47524752475248,44)(58,44)};\addplot[->, style=dotted, very thick]coordinates{(48.47524752475248,45)(58,45)};\addplot[->, style=dotted, very thick]coordinates{(48.47524752475248,46)(58,46)};\addplot[->, style=dotted, very thick]coordinates{(26,46.47422680412371)(26,56)};\addplot[->, style=dotted, very thick]coordinates{(30,46.47422680412371)(30,56)};\addplot[->, style=dotted, very thick]coordinates{(32,46.47422680412371)(32,56)};\addplot[->, style=dotted, very thick]coordinates{(34,46.47422680412371)(34,56)};\addplot[->, style=dotted, very thick]coordinates{(36,46.47422680412371)(36,56)};\addplot[->, style=dotted, very thick]coordinates{(38,46.47422680412371)(38,56)};\addplot[->, style=dotted, very thick]coordinates{(40,46.47422680412371)(40,56)};\addplot[->, style=dotted, very thick]coordinates{(41,46.47422680412371)(41,56)};\addplot[->, style=dotted, very thick]coordinates{(42,46.47422680412371)(42,56)};\addplot[->, style=dotted, very thick]coordinates{(44,46.47422680412371)(44,56)};\addplot[->, style=dotted, very thick]coordinates{(45,46.47422680412371)(45,56)};\addplot[->, style=dotted, very thick]coordinates{(46,46.47422680412371)(46,56)};\addplot[->, style=dotted, very thick]coordinates{(47,46.47422680412371)(47,56)};\addplot[->, style=dotted, very thick]coordinates{(48,46.47422680412371)(48,56)};\addplot [pattern = north east lines, draw=white]coordinates{(48,46)(58,46)(58,56)(48,56)(48,46)};\addplot[only marks] coordinates{(0,0)(4,4)(6,6)(14,13)(15,13)(26,27)(26,28)(26,29)(26,30)(26,31)(26,32)(26,33)(26,34)(26,35)(26,36)(26,37)(26,38)(26,39)(26,40)(26,41)(26,42)(26,43)(26,44)(26,45)(26,46)(31,26)(32,26)(33,26)(34,26)(35,26)(36,26)(37,26)(38,26)(39,26)(40,26)(41,26)(42,26)(43,26)(44,26)(45,26)(46,26)(47,26)(48,26)};\addplot[only marks,mark=o, mark options={scale=2.7}] coordinates{(4,4)(6,6)(15,13)(26,46)(48,26)};\addplot[only marks, mark=o] coordinates{(8,8)(10,10)(12,12)(14,14)(16,16)(18,17)(18,18)(19,17)(20,19)(20,20)(21,19)(22,21)(22,22)(23,21)(24,23)(24,24)(25,23)(26,25)(26,26)(27,25)(28,26)(28,27)(28,28)(29,26)(29,27)(30,26)(30,29)(30,30)(30,31)(30,32)(30,33)(30,34)(30,35)(30,36)(30,37)(30,38)(30,39)(30,40)(30,41)(30,42)(30,43)(30,44)(30,45)(30,46)(31,29)(32,30)(32,31)(32,32)(32,33)(32,34)(32,35)(32,36)(32,37)(32,38)(32,39)(32,40)(32,41)(32,42)(32,43)(32,44)(32,45)(32,46)(33,30)(33,31)(34,30)(34,32)(34,33)(34,34)(34,35)(34,36)(34,37)(34,38)(34,39)(34,40)(34,41)(34,42)(34,43)(34,44)(34,45)(34,46)(35,30)(35,32)(35,33)(36,30)(36,32)(36,34)(36,35)(36,36)(36,37)(36,38)(36,39)(36,40)(36,41)(36,42)(36,43)(36,44)(36,45)(36,46)(37,30)(37,32)(37,34)(37,35)(38,30)(38,32)(38,34)(38,36)(38,37)(38,38)(38,39)(38,40)(38,41)(38,42)(38,43)(38,44)(38,45)(38,46)(39,30)(39,32)(39,34)(39,36)(39,37)(40,30)(40,32)(40,34)(40,36)(40,38)(40,39)(40,40)(40,41)(40,42)(40,43)(40,44)(40,45)(40,46)(41,30)(41,32)(41,34)(41,36)(41,38)(41,39)(41,40)(41,41)(41,42)(41,43)(41,44)(41,45)(41,46)(42,30)(42,32)(42,34)(42,36)(42,38)(42,39)(42,40)(42,41)(42,42)(42,43)(42,44)(42,45)(42,46)(43,30)(43,32)(43,34)(43,36)(43,38)(43,39)(43,40)(43,41)(44,30)(44,32)(44,34)(44,36)(44,38)(44,39)(44,40)(44,42)(44,43)(44,44)(44,45)(44,46)(45,30)(45,32)(45,34)(45,36)(45,38)(45,39)(45,40)(45,42)(45,43)(45,44)(45,45)(45,46)(46,30)(46,32)(46,34)(46,36)(46,38)(46,39)(46,40)(46,42)(46,43)(46,44)(46,45)(46,46)(47,30)(47,32)(47,34)(47,36)(47,38)(47,39)(47,40)(47,42)(47,43)(47,44)(47,45)(47,46)(48,30)(48,32)(48,34)(48,36)(48,38)(48,39)(48,40)(48,42)(48,43)(48,44)(48,45)(48,46)};\end{axis}\end{tikzpicture}
\caption{Semigroup $v(R)$ of the Example \ref{exring1}}
\end{figure}
\end{ex}

\begin{ex}
\label{exring2}
Let us consider the ring $R=[\![(t^4,u^3),(t^7,u^{13}),(t^{11},u^{17}),(t^{16},u^{20})]\!]$ that has embedding dimension $4$. Its value semigroup is the good semigroup that appeared in Example \ref{exalg}, where we proved that its embedding dimension is five. 

\iffalse
\begin{eqnarray*}
v(R)&=&\{ ( 4, 3 ), ( 7, 6 ), ( 7, 9 ), ( 7, 12 ), ( 7, 13 ), ( 8, 6 ), ( 11, 9 ), ( 11, 12 ), ( 11, 15 ),\\
 & &( 11, 16 ), ( 11, 17 ), ( 12, 9 ), ( 14, 12 ), ( 14, 15 ), ( 14, 16 ), ( 14, 18 ), ( 14, 19 ), ( 14, 20 ), ( 14, 21 ),\\
 & &( 14, 22 ), ( 14, 23 ), ( 14, 24 ), ( 15, 12 ), ( 15, 15 ), ( 15, 16 ), ( 15, 18 ), ( 15, 19 ), ( 15, 20 ), ( 15, 21 ),\\
 & &( 15, 22 ), ( 15, 23 ), ( 15, 24 ), ( 16, 12 ), ( 16, 15 ), (16, 16 ), ( 16, 18 ), ( 16, 19 ), ( 16, 20 ), ( 18, 12 ),\\
 & &( 18, 15 ), ( 18, 16 ), ( 18, 18 ), ( 18, 19 ), ( 18, 21 ), ( 18, 22 ), ( 18, 23 ), ( 18, 24 ), ( 19, 12 ), ( 19, 15 ),\\
 & &( 19, 16 ), ( 19, 18 ), ( 19, 19 ), ( 19, 21 ), ( 19, 22 ), ( 19, 23 ), ( 19, 24 ), ( 20, 12 ), ( 20, 15 ), ( 20, 16 ),\\
 & &( 20, 18 ), ( 20, 19 ), ( 20, 21 ), ( 20, 22 ), ( 20, 23 ),
( 21, 12 ), ( 21, 15 ), ( 21, 16 ), ( 21, 18 ), ( 21, 19 ),\\
 & &( 21, 21 ), ( 21, 22 ), ( 21, 24 )\}
\end{eqnarray*}
\fi

We focus on one of its \emph{msor}, namely  $\eta=\{(4,3),(7,13),(11,17),(16,20),(\infty,26)\}$.
If we analyze in detail what happens, we oberve that $(t^{23},u^{33})=(t^7,u^{13})\cdot(t^{16},u^{20})\in R$ and $(t^{23},u^{26})=(t^{11},u^{17})\cdot(t^{12},u^{9})\in R$, thus $(0,u^{26}-u^{33})\in R$.
But $\eta=\{(4,3),(7,13),(11,17),(16,20)\}$ is not a \emph{sor}, since we have seen in the Example \ref{exalg} that all MHS have to contain either $(\infty,26)$ or $(24,\infty)$. 
This fact happens because in the ring $R$ all the elements of value $(x,26)$ with $x\geq 25$ appear because we have a complete cancellation on the first component (i.e. we obtain $0$ on the first component).\\
In the semiring $\sgen{\eta}$ the existence of the elements $(23,33)$ and $(23,26)$ guarantees, by property (G3) of the good semigroups, only the existence of one element of value $(>23,26)$, but not the presence of all elements $(x,26)$, with $x\geq 24$.\\
 Also in this case in the semigroup we lose a piece of information present in the ring.

Differently from the previous example, it is not possible to find a ring $T$ such that $v(T)=v(R)$ and $\operatorname{edim}(T)=\operatorname{edim}(v(T))=5$.
To see this, let us suppose by contradiction that such a ring $T$ exists. Let us consider 
$\psi_1,\ldots,\psi_5 \in T$, such that \begin{itemize}
    \item $v(\psi_1)=(4,3)$;
     \item $v(\psi_2)=(7,13)$;
      \item $v(\psi_3)=(11,17)$;
       \item $v(\psi_4)=(16,20)$;
        \item $v(\psi_5)=(\infty,26)$.
\end{itemize}
From the proof of Theorem \ref{ring} we have that $T \cong \K[[\psi_1,\psi_2,\psi_3,\psi_4, \psi_5]].$ 

Let us consider the ring $T'\cong \K[[\psi_1,\psi_2,\psi_3,\psi_4]]$. We must have $v(T') \subsetneq v(T)$ because otherwise $T=T'$, against the fact that $\operatorname{edim}(T)=5$.
Now we have that $\{(4,3),(7,13),(11,17),(16,20) \} \subseteq v(T')$ and it is not difficult to show that there exists only one good semigroup $D$ containing these vectors and contained in $v(T)$. The good semigroup $D$ is the one appeared in \cite[Example 2.16]{anal:unr} as the first example of a good semigroup that cannot be a value semigroup of a ring. Thus $v(T')=v(T)$ and we have a contradiction.
\end{ex}

\subsection{Relationship between embedding dimension and multiplicity}
\label{section32}

Now we want to prove the following theorem.
\begin{teo} \label{mult}
	Let $S$ be a good semigroup. Denote by $\bs{e}=(e_1,e_2)$ the multiplicity vector of $S$. Then $\operatorname{edim}(S) \leq e_1+e_2$.
\end{teo}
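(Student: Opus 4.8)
The plan is to produce an explicit system of representatives $\eta\subseteq I_A(S)$ with $|\eta|\le e_1+e_2$; by Definition \ref{E14} this immediately yields $\operatorname{edim}(S)\le e_1+e_2$. The construction mirrors the classical numerical argument, where the Apéry set $\operatorname{Ap}(S,e)$ has exactly $e$ elements and a minimal generating set is found among $\{e\}\cup(\operatorname{Ap}(S,e)\setminus\{0\})$, so that $\operatorname{edim}\le e$.

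First I would set up the Apéry set of $S$ with respect to the multiplicity, $\operatorname{Ap}(S,\bs{e})=\{\bs{\alpha}\in\Gamma_S \mid \bs{\alpha}-\bs{e}\notin\Gamma_S\}$, together with its decomposition into levels as in \cite{DAGuMi}. The key elementary observation is that $I_A(S)\setminus\{\bs{e}\}\subseteq\operatorname{Ap}(S,\bs{e})$: if some $\bs{\alpha}\in I_A(S)$ with $\bs{\alpha}\neq\bs{e}$ had $\bs{\alpha}-\bs{e}\in\Gamma_S$, then (using locality to guarantee $\bs{\alpha}\ge\bs{e}$ componentwise, and treating an infinite coordinate via Lemma \ref{E2}) we could write $\bs{\alpha}=(\bs{\alpha}-\bs{e})\odot\bs{e}$ as a product of two elements both different from $\bs{\alpha}$, contradicting irreducibility. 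Hence every candidate generator other than $\bs{e}$ lives in the Apéry set, and it suffices to choose generators level by level.

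Next I would exploit the reducibility machinery of Subsection \ref{section22}: by Theorem \ref{E21} it is enough to produce $\eta$ with $\operatorname{red}(\eta)=I_A(S)$, i.e. $\eta\in\mathfrak{R}(S)$. I would scan the levels of $\operatorname{Ap}(S,\bs{e})$ from the bottom up and, in each level, keep only those irreducible absolutes that are not already forced by the ones previously chosen, discarding an element $\bs{\alpha}$ as soon as it becomes reducible by the current $\eta$ in the sense of Definitions \ref{defind} and \ref{E19}. The point of the level ordering is that, for $\bs{\alpha}$ in a given level, the elements of strictly lower levels already present in $\sgen{\eta}$ supply, for each relevant ordinate $y\in Y^{\eta}(\bs{\alpha})$ (respectively abscissa in $X^{\eta}(\bs{\alpha})$), a point of $\sgen{\eta}$ lying strictly to the right (respectively strictly above); this is exactly the combinatorial incarnation of property (G3) that underlies reducibility.

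The counting is where the bound $e_1+e_2$ enters, and it is the main obstacle. I would control the two directions separately: the distinct abscissas occurring among the retained irreducible absolutes are governed by the numerical projection $\pi_1(\Gamma_S)$, of multiplicity $e_1$, and the ordinates by $\pi_2(\Gamma_S)$, of multiplicity $e_2$, so that at most $e_1$ elements are needed to dominate the horizontal direction and at most $e_2$ the vertical one, giving $|\eta|\le e_1+e_2$. Concretely I expect the number of levels of $\operatorname{Ap}(S,\bs{e})$ to be $e_1+e_2-1$, so that keeping one genuinely new irreducible absolute per level and adjoining $\bs{e}$ produces a set of size at most $e_1+e_2$ which is a \emph{sor} by the reducibility argument above. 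The delicate step — the one I expect to cost the most — is proving rigorously that this per-level selection really satisfies $\operatorname{red}(\eta)=I_A(S)$ and pinning down the exact number of levels; the sharpness for Arf semigroups (Corollary \ref{arf}), where every level must contribute and $\operatorname{edim}(S)=e_1+e_2$, is a good consistency check for both the mechanism and the count.
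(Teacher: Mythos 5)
Your framework is the right one and matches the paper's in spirit: both use the levels of the Ap\'ery set from \cite{DAGuMi} (whose number is the source of the bound $e_1+e_2$) together with the reducibility machinery of Subsection \ref{section22} and Theorem \ref{E21} to produce a small \emph{sor}. Your preliminary observation that $I_A(S)\setminus\{\bs{e}\}$ sits inside the Ap\'ery set is correct, and your greedy procedure (discard $\bs{\alpha}$ as soon as it is reducible by the already kept elements) does output a set $\eta$ with $\operatorname{red}(\eta)=I_A(S)$, hence a \emph{sor}, since reducibility is monotone under enlarging $\eta$. But the heart of the theorem is the counting, and there you have a genuine gap. Neither of your two counting routes is justified: the projection argument (``at most $e_1$ elements for the horizontal direction, at most $e_2$ for the vertical one'') has no supporting mechanism --- the number of distinct abscissas of irreducible absolutes can greatly exceed $e_1$, as already in Example \ref{exalg} --- and the claim ``one genuinely new irreducible absolute per level'' is precisely what you would have to prove, yet nothing in your level-by-level scan forces it. A level can contain several pairwise $\ll$-incomparable irreducible absolutes, and there is no reason why keeping one of them makes the others reducible (reducibility by Definitions \ref{defind}/\ref{E19} requires witnesses in $\sgen{\eta}$ strictly to the right/above in the \emph{same} row or column, which an incomparable element in the same level does not provide). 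Also, as a minor point, the number of levels is exactly $e_1+e_2$ (Thm.\ 3.4 of \cite{DAGuMi}), not $e_1+e_2-1$.

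The paper closes exactly this gap with machinery you do not have. First, it introduces a stronger, one-sided notion ($\rho$-reducibility, Definition \ref{rho}), which only uses $\odot$-products of elements with strictly smaller first coordinate; this one-sidedness (Remark \ref{red}) is what makes the deletion algorithm (Algorithm \ref{algo}, scanning $I_A(S)$ by decreasing abscissa) provably output a \emph{sor}. Second, Proposition \ref{prop} shows that every $\bs{\alpha}\in I_A(S)$ with ${}_2\Delta^S(\bs{\alpha})\not\subseteq \overline{\operatorname{Ap}(S)}$ is $\rho$-reducible, so that the explicit set $\eta_S=\{\bs{\alpha}\in I_A(S)\ :\ {}_2\Delta^S(\bs{\alpha})\subseteq\overline{\operatorname{Ap}(S)}\}$ is a \emph{sor} (Corollary \ref{cor}). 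Third --- and this is the step your proposal has no substitute for --- the bound $|\eta_S|\leq e_1+e_2$ is \emph{not} obtained by placing the elements of $\eta_S$ one per level; instead each $\bs{h}\in\eta_S$ is replaced by the auxiliary element $\overline{\bs{h}}=\min\left({}_2\Delta^S(\bs{h})\cup\{\bs{h}\}\right)$ (with an ad hoc choice for elements of type $(\alpha_1,\infty)$), and a four-case analysis, relying on the minimality of $\overline{\bs{h}}$ and on Lemma 3.3 of \cite{DAGuMi}, shows that distinct elements of $\eta_S$ give auxiliary elements lying in distinct levels. So the injection into the set of levels goes through these minima of the sets ${}_2\Delta^S$, not through the \emph{sor} elements themselves; without this (or some equally concrete device) your per-level count remains an unproven hope rather than a proof.
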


We recall that, if $S$ is a numerical semigroup with multiplicity $e(S)$, it is possible to prove that $\operatorname{edim}(S)\leq e(S)$   using the fact that the  set $\operatorname{Ap}(S)\setminus \{ 0 \} \cup \{ e(S)\}$ is a system of generators of $S$ with cardinality $e(S).$ 
Using the properties of the Apéry set of a good semigroup, introduced in \cite{DAGuMi},  we wish to prove the same inequality for good semigroups contained in $\N^2$.\\
First of all, we recall the notion of Apéry set and levels. 
\iffalse Let us recall some notation commonly used for good semigroups, extending this in the natural way to the semiring $\overline{S}$:

We observe that if $\alpha\in \overline{S}$ is an infinite element all the set previously defined are empty. \fi
\begin{defi}
The Apéry set of the  good semigroup $S$ (with respect to the multiplicity) is defined as the set:
$$\operatorname{Ap}(S)=\{\bs{\alpha}\in S: \bs{\alpha}-\bs{e} \notin S \}.$$
\end{defi}

\iffalse
As the consecuence of Lemma 3.2 in \cite{DAGuMi}, we observe that   $\alpha=(\alpha_1, \infty)\in \operatorname{Ap}(\overline{S})$ if and only if $(alpha_1,c_2+e_2)\in \operatorname{Ap}(S)$.
Now we recall and extend the order relations defined in $\N^2$ to $\overline{\N^2}$. \cite{anal:unr}.
\fi
 We say that $(\alpha_1,\alpha_2)\leq\leq (\beta_1,\beta_2)$ if and only if $(\alpha_1,\alpha_2)=(\beta_1,\beta_2)$ or $(\alpha_1,\alpha_2)\neq (\beta_1,\beta_2)$ and we have
  $(\alpha_1,\alpha_2)\ll (\beta_1,\beta_2)$ where the last means $\alpha_1<\beta_1$ and $\alpha_2<\beta_2$.\\
As described in \cite{DAGuMi}, it is possible to build up a partition of the Apéry set, in the following way. Let us define, $D^{0}=\emptyset$:
$$B^{(i)}=\{\bs{\alpha}\in \operatorname{Ap}(S)\backslash (\cup_{j<i}D^{(j)}): \bs{\alpha} \text{ is maximal with respect to } \leq \leq\}$$
$$C^{(i)}=\{\bs{\alpha}\in B^{(i)}: \bs{\alpha}=\bs{\beta}_1\oplus \bs{\beta}_2 \text{ for some } \bs{\beta}_1,\bs{\beta}_2\in B^{(i)}\setminus\{\bs{\alpha}\}\}$$
$$D^{(i)}=B^{(i)}\backslash C^{(i)}.$$
For a certain $N\in \N$, we have $\operatorname{Ap}(S)=\cup_{i=1}^ND^{(i)}$ and $D^{(i)}\cap D^{(j)}=\emptyset$. In according to notation of \cite{DAGuMi}, we rename these sets in an increasing order setting $A_i=D^{(N+1-i)}.$ Thus we have
$$\operatorname{Ap}(S)=\cup_{i=1}^NA_i.$$
Notice that the first level $A_1$ of $\operatorname{Ap}(S)$ consists only of the zero vector.
It was proved [Thm. 3.4 \cite{DAGuMi}] that $N=e_1+e_2$, a key result in the proof of our inequality.

In order to simplify the notation in the following results we define the  set $\overline{\operatorname{Ap}(S)}=\left(\operatorname{Ap}(S)\setminus\{\bs{0}\}\right) \cup \{\bs{e} \}$.
Since we are only interchanging the role of the multiplicity vector and the zero vector, we have 
$$\overline{\operatorname{Ap}(S)}=\cup_{i=1}^{N}A'_i,$$
where $A_i=A'_i$ for $i=2,\ldots,N$, and $A'_1=\{\bs{e}\}.$

In order to prove Theorem \ref{mult}, it is useful to introduce the following new definition of reducibility of an element of $I_A(S)$ by a subset $\eta \subseteq I_A(S)$.

\begin{defi} \label{rho}
	Let $\bs{\alpha}=(\alpha_1,\alpha_2) \in I_A(S)$ and $\eta \subseteq I_A(S)$.
\begin{itemize}
	\item \textbf{Case} $(\alpha_1,\alpha_2) \in I_{A_f}(S)$. 
	Then $\bs{\alpha}$ is $\rho$-reducible by $\eta$ if 
	\begin{enumerate}
		\item $\exists \bs{h}_1,\ldots,\bs{h}_k \in \eta$ such that
		$\bs{h}_1 \odot \dots \odot \bs{h}_k=(\beta_1,\alpha_2)$ with $\beta_1<\alpha_1$.
		
		\item $\forall x\in \{\beta_1,\ldots,{}_2\delta^{S}(\bs{\alpha})\}$ such that $ (x,\alpha_2)\in S $ we can find $\bs{j}_1,\ldots,\bs{j}_l \in \eta$ such that $\bs{j}_1 \odot \dots \odot \bs{j}_l=(x,\beta_2)$  with $\beta_2>\alpha_2$.
		\end{enumerate}
			\item \textbf{Case} $\bs{\alpha}=(\infty,\alpha_2) \in I_{A}(S)^{\infty}$. Denote, as we did before, by $\tilde{x}$ the minimal element such that $(x,\alpha_2) \in S$ for all $x\geq \tilde{x}$.
		Then $(\infty,\alpha_2)$ is $\rho$-reducible by $\eta$ if 
		\begin{enumerate}
			\item $\exists \bs{h}_1,\ldots,\bs{h}_k \in \eta$ such that
			$\bs{h}_1 \odot \dots \odot \bs{h}_k=(\beta_1,\alpha_2)$ with $\beta_1<\infty$.
			
			\item $\forall \tilde{x} \in \{ x\in \{\beta_1,\ldots,\max(\beta_1,\tilde{x})+e_1-1\}: (x,\alpha_2)\in S \}$ we can find $\bs{j}_1,\ldots,\bs{j}_l \in \eta$ such that $\bs{j}_1 \odot \dots \odot \bs{j}_l=(\tilde{x},\beta_2)$  with $\beta_2>\alpha_2$.
	\end{enumerate}
	\item \textbf{Case} $\bs{\alpha}=(\alpha_1,\infty) \in I_{A}(S)^{\infty}$. Such an element is never $\rho$-reducible by $\eta$. 
\end{itemize}
\end{defi}
\begin{oss}
	If an element of $I_A(S)$ is $\rho$-reducible by $\eta$, it is also reducible by $\eta$.
\end{oss}	
\begin{oss} \label{red}
	If an element $(\alpha_1,\alpha_2)$ of $I_A(S)$ is $\rho$-reducible by $\eta$, then it is also $\rho$-reducible by $\eta_{\alpha_1}=\{ (x,y) \in \eta: x<\alpha_1 \}$.
	In fact, the elements required to satisfy the condition 1) and 2) of Definition \ref{rho} cannot be obtained by using irreducible absolute elements of $S$ with first component bigger than $\alpha_1$ (because we only allow the operation $\odot$  to produce them). 
\end{oss}

Now we write $$ I_A(S)=\{ \bs{\alpha}^{(1)}=(\alpha_1^{(1)},\alpha_2^{(1)}),\ldots,\bs{\alpha}^{(n)}=(\alpha_1^{(n)},\alpha_2^{(n)}) \}, $$ where the elements are ordered in decreasing order with respect to the first coordinate, i.e. if $j<l$, then $ \alpha_1^{(j)}>\alpha_1^{(l)}$ or $\alpha_1^{(j)}=\alpha_1^{(l)}=\infty$ and $\alpha_2^{(j)}>\alpha_2^{(l)}$.
Let us consider the following algorithm to produce, starting from $I_A(S)$, a set $\eta$ that is still a \emph{sor}  for $S$.

\begin{algorithm}
	\SetKwData{Left}{left}
	\SetKwData{This}{this}
	\SetKwData{Up}{up}
	\SetKwFunction{Union}{Union}
	\SetKwFunction{FindCompress}{FindCompress}
	\SetKwInOut{Input}{input}
	\SetKwInOut{Output}{output}
	\Input{The set of irreducible absolute elements $I_A(S)$}
	\Output{A subset $\eta \subseteq I_A(S)$}
	\BlankLine
	$\eta \longleftarrow I_A(S)$
	
	\For{$k\leftarrow 1$ \KwTo $n$} {\If{$\bs{\alpha}^{(k)} \textrm{ is } \rho\textrm{-reducible by } I_A(S)\setminus\{\bs{\alpha}^{(k)}\}$ }{$\eta \longleftarrow \eta\setminus\{\bs{\alpha}^{(k)}\}$}}
	\Return $ \eta $
	\caption{A way to produce a \emph{sor} using $\rho$-reducibility}
	\label{algo}
\end{algorithm}
\begin{prop}
    The output $\eta$ of Algorithm \ref{algo} is a \emph{sor} for $S$
    \end{prop}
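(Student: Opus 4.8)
The plan is to derive the statement from Theorem \ref{E21}: it suffices to show that the output $\eta$ satisfies $\operatorname{red}(\eta)=I_A(S)$, that is $\eta\in\mathfrak{R}(S)$, since then Theorem \ref{E21} gives at once that $\eta$ is a \emph{sor}. First I would record what Algorithm \ref{algo} actually returns. Since the test inside the loop compares $\bs{\alpha}^{(k)}$ against the \emph{fixed} set $I_A(S)\setminus\{\bs{\alpha}^{(k)}\}$, the individual removal decisions do not depend on which elements have already been discarded, so the output is precisely $\eta=\{\bs{\alpha}^{(k)}\ |\ \bs{\alpha}^{(k)}\text{ is not }\rho\text{-reducible by }I_A(S)\setminus\{\bs{\alpha}^{(k)}\}\}$. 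Writing, for $t\in\overline{\N}$, the left part $I_A(S)_t:=\{(x,y)\in I_A(S)\ |\ x<t\}$, Remark \ref{red} lets me replace the test set by a smaller one: each discarded $\bs{\alpha}^{(k)}$ is in fact $\rho$-reducible already by $I_A(S)_{\alpha_1^{(k)}}$, the irreducible absolutes lying strictly to its left.

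Next I would isolate two facts. First, $\rho$-reducibility is \emph{monotone} in the reducing set: if $\bs{\alpha}$ is $\rho$-reducible by $\theta$ and $\theta\subseteq\theta'$, then $\bs{\alpha}$ is $\rho$-reducible by $\theta'$, because the witnessing products $\bs{h}_1\odot\dots\odot\bs{h}_k$ and $\bs{j}_1\odot\dots\odot\bs{j}_l$ of Definition \ref{rho} still lie in $\theta'$; keeping the same $\beta_1$ in condition (1) leaves the range of condition (2) unchanged, so the same witnesses work verbatim. Second, $\rho$-reducible by $\theta$ implies reducible by $\theta$, which is the Remark stated right after Definition \ref{rho}.

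With these in hand I would recover every discarded element inside $\operatorname{red}(\eta)$ by induction on the first coordinate, processing discarded elements in order of \emph{increasing} $\alpha_1$ (with $\infty$ largest). At the stage where a discarded $\bs{\alpha}$ with first coordinate $\alpha_1$ is to be recovered, the current closure set $\eta'$ already contains all of $I_A(S)_{\alpha_1}$: the non-discarded elements with first coordinate $<\alpha_1$ belong to $\eta$, and the discarded ones with first coordinate $<\alpha_1$ were recovered at earlier stages. Hence $\eta'\supseteq I_A(S)_{\alpha_1}$, so by monotonicity $\bs{\alpha}$ is $\rho$-reducible by $\eta'$, therefore reducible by $\eta'$, and thus $\bs{\alpha}\in\red{\eta'}\subseteq\operatorname{red}(\eta)$. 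Since $\eta\subseteq\operatorname{red}(\eta)$ holds trivially, this yields $\operatorname{red}(\eta)=I_A(S)$, whence $\eta\in\mathfrak{R}(S)$ and $\eta$ is a \emph{sor} by Theorem \ref{E21}.

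The step I expect to require the most care is the well-foundedness of this induction in the presence of elements with infinite first coordinate, which all share $\alpha_1=\infty$ and would otherwise risk circular dependence. Here condition (1) of Definition \ref{rho} forces $\beta_1<\infty$, so any product certifying the $\rho$-reducibility of an element $(\infty,\alpha_2)$ can only use generators with finite first coordinate (all irreducible absolutes are nonzero, so in a local good semigroup they have strictly positive coordinates, and a summand with first coordinate $\ge\alpha_1$ would force the product's first coordinate $\ge\alpha_1$). Consequently every $(\infty,\alpha_2)$ is recovered using only finite-first-coordinate elements, which precede the whole infinite block in the increasing order; no two infinite absolutes depend on each other, and the induction is well founded, closing the argument.
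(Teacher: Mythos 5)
Your proof is correct, and it rests on the same pillars as the paper's own argument: reduce everything to showing $\operatorname{red}(\eta)=I_A(S)$, i.e. $\eta\in\mathfrak{R}(S)$, so that Theorem \ref{E21} applies, and exploit Remark \ref{red} together with the fact that $\rho$-reducibility implies reducibility. What differs is the architecture of the induction. The paper inducts \emph{forward over the loop iterations}, maintaining the invariant that every intermediate set satisfies the reducibility condition; the decreasing ordering of $I_A(S)$ by first coordinate is precisely what guarantees that, when $\bs{\alpha}^{(k+1)}$ is discarded, the set $W$ of irreducible absolutes lying to its left consists of not-yet-processed elements and is therefore still contained in the current set. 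You instead observe that the algorithm tests each $\bs{\alpha}^{(k)}$ against the \emph{fixed} set $I_A(S)\setminus\{\bs{\alpha}^{(k)}\}$, so the output is order-independent, and you then run a single recovery induction, in increasing order of first coordinate, inside the closure computation for the final output. This reorganization buys two things the paper leaves implicit: first, you isolate and prove the monotonicity of $\rho$-reducibility in the reducing set, which the paper uses silently when passing from ``$\rho$-reducible by $W$'' to ``$\rho$-reducible by $\eta'$''; second, and more substantially, since you only ever apply $\red{\cdot}$ to genuine iterates of the closure chain (all of which sit inside $\operatorname{red}(\eta)$ by construction), you never need monotonicity of the closure operator $\operatorname{red}(\cdot)$ itself, whereas the paper's final step $I_A(S)=\operatorname{red}(\eta)\subseteq\operatorname{red}(\eta')$ tacitly relies on monotonicity of Definition \ref{defind} reducibility --- a genuinely delicate point for infinite elements, where enlarging the generating set can shift the window $Y^{\eta}(\bs{\alpha})$ upward. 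Finally, your treatment of the infinite absolutes $(\infty,\alpha_2)$ --- their certificates can only involve generators with finite first coordinate, since a factor with infinite first coordinate would force $\beta_1=\infty$ --- is exactly the observation needed to keep the induction well founded despite the ties at $\alpha_1=\infty$, and it mirrors what makes the paper's set $W$ usable when $\alpha_1^{(k+1)}=\infty$.
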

\begin{proof}

Let us prove by induction on $k$ that the subset $\eta$ produced by the algorithm is a \emph{sor} for $S$. By Theorem \ref{E21}, we can do it by showing that it satisfies the reduciblity condition.
At the first step $\eta=I_A(S)$, hence we have a \emph{sor} for $S$. Suppose that at the $k$th step of the algorithm $\eta \in \mathfrak{R}(S)$ and let us show that it still satisfies the reducibility condition  after the $k+1$th step.
If $\bs{\alpha}^{(k+1)}$ is not $\rho$-reducible by $ I_A(S)\setminus\{\bs{\alpha}^{(k+1)}\}$, then we have nothing to prove since $\eta$ remains unchanged.
Now let us suppose that $\bs{\alpha}^{(k+1)}$ is $\rho$-reducible by $ I_A(S)\setminus\{\bs{\alpha}^{(k+1)}\}$. We need to prove that $\eta \setminus \{ 
\bs{\alpha}^{(k+1)} \}=\eta' \in \mathfrak{R}(S)$.
By Remark \ref{red}, $\bs{\alpha}^{(k+1)}$ is $\rho$-reducible by the set $W=\{ (\alpha_1,\alpha_2) \in I_A(S)\setminus\{\bs{\alpha}^{(k+1)}\}: \alpha_1<\alpha_1^{(k+1)} \}=\{ \bs{\alpha}^{(k+2)}, \ldots,\bs{\alpha}^{(n)}\}$.
But at this step of the algorithm $W \subseteq \eta'$, thus $\bs{\alpha}^{(k+1)}$ is $\rho$-reducible by $\eta'$, thus also reducible by $\eta$ and this means that $\eta \subseteq \operatorname{red}(\eta')$. By the inductive step $I_A(S)=\operatorname{red}(\eta) \subseteq \operatorname{red}(\eta')$, hence $\eta' \in \mathfrak{R}(S)$ and it is still a \emph{sor}.
    
\end{proof}

\begin{prop} \label{prop}
	If $\bs{\alpha}=(\alpha_1,\alpha_2) \in I_A(S)$ is such that 
	$_2\Delta^S(\bs{\alpha}) \not \subseteq \overline{\operatorname{Ap}(S)}$, then  $\bs{\alpha}$ is $\rho$-reducible by $I_A(S)\setminus\{\bs{\alpha}\}$.
\end{prop}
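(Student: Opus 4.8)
The plan is to verify directly the two clauses of Definition \ref{rho} for $\bs{\alpha}$ and $\eta = I_A(S)\setminus\{\bs{\alpha}\}$, treating the finite case $\bs{\alpha}=(\alpha_1,\alpha_2)\in I_{A_f}(S)$ first; the infinite case $\bs{\alpha}=(\infty,\alpha_2)$ is entirely parallel, with $\tilde{x}$ playing the role of ${}_2\delta^S(\bs{\alpha})$ as in Lemma \ref{E2}. The hypothesis ${}_2\Delta^S(\bs{\alpha})\not\subseteq\overline{\operatorname{Ap}(S)}$ furnishes $\bs{\gamma}=(\gamma_1,\alpha_2)\in S$ with $\gamma_1<\alpha_1$ and $\bs{\gamma}\notin\overline{\operatorname{Ap}(S)}$. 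Since $\alpha_2>0$ (locality) we have $\bs{\gamma}\neq\bs{0}$, and $\bs{\gamma}\neq\bs{e}$; hence $\bs{\gamma}\notin\operatorname{Ap}(S)$, i.e. $\bs{\gamma}-\bs{e}\in S$, and in particular $\alpha_2>e_2$. Two facts will be used repeatedly. First, because $\bs{\alpha}$ is a finite absolute, $\Delta_2^S(\bs{\alpha})=\emptyset$, so every element of $S$ on the row $\{y=\alpha_2\}$ has first coordinate $\le\alpha_1$, with $\bs{\alpha}$ the unique one at $\alpha_1$. Second, since $\bs{\alpha}$ is irreducible, any $\odot$-product of at least two nonzero irreducible absolutes that lands on the row $\{y=\alpha_2\}$ cannot equal $\bs{\alpha}$, hence has first coordinate strictly below $\alpha_1$ and does not involve $\bs{\alpha}$ as a factor.

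For clause (2) I would argue as follows. Fix any $x$ with $(x,\alpha_2)\in S$ and $x<\alpha_1$. Applying (G3) to $(x,\alpha_2)$ and $\bs{\alpha}=(\alpha_1,\alpha_2)$, which agree in the second coordinate and differ in the first, yields $(x,\beta_2')\in S$ with $\beta_2'>\alpha_2$. Decompose $(x,\beta_2')$ by Theorem \ref{E7} as $\bigoplus_m\bigodot_j\bs{\gamma}_{j,m}$ with $\bs{\gamma}_{j,m}\in I_A(S)$, and select a summand $P=\bigodot_j\bs{\gamma}_{j,m_0}$ attaining the minimal first coordinate. Then $P=(x,\beta_2)$ with $\beta_2\ge\beta_2'>\alpha_2$; its first coordinate equals $x<\alpha_1$ and all factors have positive first coordinate, so none of them is $\bs{\alpha}$, and $P$ is the required $\odot$-product in $\sgen{\eta}$. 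This settles clause (2) for every admissible $x$ down to the $\beta_1$ produced below.

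For clause (1) I must produce a single $\odot$-product of elements of $\eta$ equal to some $(\beta_1,\alpha_2)$ with $\beta_1<\alpha_1$; this is the delicate point, since the minimal-first-coordinate trick of clause (2) controls only the first coordinate, whereas here the second coordinate must be hit exactly. The mechanism I would use is the multiplicity. First secure a finite irreducible absolute $\bs{g}$ with second coordinate exactly $e_2$: the lowest row $\{y=e_2\}$ carries an element of $S$, and its rightmost point $(m',e_2)$ is irreducible (no nonzero $\odot$-decomposition can have second coordinate $e_2$, as both factors would have second coordinate $\ge e_2$) and absolute (by (G3), an element of $S$ directly above it on its column would produce one strictly to its right on the same row, contradicting maximality), so $(m',e_2)\in I_{A_f}(S)$; the degenerate situation in which that row is unbounded, so that $(\infty,e_2)\in\Gamma_S$ by Lemma \ref{E2}, is treated separately. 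Using $\bs{\gamma}-\bs{e}\in S$ I would then descend along $\bs{e}$, writing $\bs{\gamma}=\bs{w}\odot k\bs{e}$ with $\bs{w}\in\operatorname{Ap}(S)$ and $k\ge1$, until reaching a level whose minimal-first-coordinate $\odot$-factorization into absolutes is flat, i.e. realizes its own height exactly; multiplying that finite product by the appropriate number of copies of $\bs{g}$ raises the second coordinate by multiples of $e_2$ back up to $\alpha_2$, producing a product of at least two absolute factors on the row $\{y=\alpha_2\}$. By the second fact noted above, such a product automatically has first coordinate $\beta_1<\alpha_1$ and avoids $\bs{\alpha}$, giving clause (1).

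The main obstacle is precisely clause (1): matching the second coordinate to $\alpha_2$ exactly by a $\odot$-product that does not secretly route through $\bs{\alpha}$. The asymmetry is that a summand attaining the minimal first coordinate of a Theorem \ref{E7} decomposition has controlled first coordinate but possibly larger second coordinate, while the summand attaining the minimal second coordinate may coincide with $\bs{\alpha}$ itself; it is exactly the non-Apéry hypothesis $\bs{\gamma}-\bs{e}\in S$, together with the existence of a finite irreducible absolute of height $e_2$, that lets one reach the row $\{y=\alpha_2\}$ from strictly below and thereby circumvent $\bs{\alpha}$. Once both clauses are established, $\bs{\alpha}$ is $\rho$-reducible by $\eta=I_A(S)\setminus\{\bs{\alpha}\}$, as required, and the same scheme with $\tilde{x}$ in place of ${}_2\delta^S(\bs{\alpha})$ disposes of the infinite absolute case.
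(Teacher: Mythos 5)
Your clause (2) is exactly the paper's argument (apply (G3), decompose via Theorem \ref{E7}, take a summand of minimal first coordinate), and your construction of the finite irreducible absolute $\bs{g}=(m',e_2)$ as the rightmost point of the row $y=e_2$ is sound; it plays the role of the paper's element $(\tilde{e}_1,e_2)$. The gap is in clause (1), which is the heart of the proposition. Your mechanism is to ``descend along $\bs{e}$ \ldots until reaching a level whose minimal-first-coordinate $\odot$-factorization into absolutes is flat''. This is an existence claim you never prove, and nothing guarantees it: for a level $\bs{\gamma}-j\bs{e}$, the summand of minimal first coordinate in a Theorem \ref{E7} decomposition has second coordinate $\geq\alpha_2-je_2$ with nothing forcing equality; the descent terminates at the Apéry level $\bs{w}$ after at most $k$ steps whether or not ``flatness'' has occurred; and flatness at some level is equivalent to that level being itself an exact $\odot$-product of irreducible absolutes --- which is precisely what has to be proved, not assumed. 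A second hole: the degenerate case $(\infty,e_2)\in\Gamma_S$ is announced as ``treated separately'' but never treated. It is not vacuous (then your $\bs{g}$ does not exist), and excluding it needs a real argument: the paper shows that unboundedness of the row $y=e_2$ propagates to the row $y=\alpha_2$, forcing $\bs{\alpha}=(\infty,\alpha_2)$ to be a nontrivial $\odot$-product, against irreducibility.

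The idea you are missing is the paper's maximality trick, which is what manufactures the exact product that your ``flatness'' is supposed to supply. Choose $(\beta_1,\alpha_2)\in{}_2\Delta^S(\bs{\alpha})\setminus\overline{\operatorname{Ap}(S)}$ with $\beta_1$ \emph{largest}, write $(\beta_1,\alpha_2)=(\tilde{\alpha}_1,\tilde{\alpha}_2)\odot k\bs{e}$ with $(\tilde{\alpha}_1,\tilde{\alpha}_2)\in\overline{\operatorname{Ap}(S)}\cup\{\bs{0}\}$ and $k\geq1$, and factor $(\tilde{\alpha}_1,\tilde{\alpha}_2)$ merely into \emph{irreducibles}, which always exists by Remark \ref{E6}. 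The maximality of $\beta_1$ then forces every irreducible factor to be absolute: a non-absolute factor $(a_1,a_2)$ could be replaced by some $(x,a_2)\in S$ with $x>a_1$, producing an element of ${}_2\Delta^S(\bs{\alpha})$ outside $\overline{\operatorname{Ap}(S)}$ (it still contains $k\bs{e}$ as a factor) with first coordinate larger than $\beta_1$, a contradiction. Replacing $\bs{e}$ by your $\bs{g}$ in the resulting identity gives clause (1). Alternatively, your scheme could be repaired by taking, in the decomposition of $\bs{w}$, a summand of minimal \emph{second} coordinate: it is flat by definition, and its factors have height at most $w_2<\alpha_2$, hence avoid $\bs{\alpha}$ automatically; but that is not what you wrote, and one must still exclude that all such summands are infinite, which again requires the argument of the paper.
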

\begin{proof}
	Let us choose $(\beta_1,\alpha_2) \notin \overline{\operatorname{Ap}(S)}$ with the largest possible $\beta_1$.
	Thus, there exists an integer $k\geq 1$ such that $(\tilde{\alpha}_1,\tilde{\alpha}_2) \odot k(e_1,e_2)  =(\beta_1,\alpha_2)$, where $ (\tilde{\alpha}_1,\tilde{\alpha}_2)\in \overline{\operatorname{Ap}(S)} \cup \{\bs{0}\}$.
	Notice that, if $(\tilde{\alpha}_1,\tilde{\alpha}_2)=\bs{0}$, then $k\geq2$, otherwise we would have  $(\beta_1,\alpha_2)=(e_1,e_2) \in\overline{\operatorname{Ap}(S)}$.

If  $(\tilde{\alpha}_1,\tilde{\alpha}_2)\neq \bs{0}$, we write it as
	$$(\tilde{\alpha}_1,\tilde{\alpha}_2)=\bs{h}_1 \odot \dots \odot \bs{h}_l,$$
	where the $\bs{h}_j$ are irreducible elements of $S$.

Each $\bs{h}_j=	 (\alpha_1^j, \alpha_2^j)$ is an absolute element.  In fact, if it were possible to write it as $$(x,\alpha_2^j) \oplus (\alpha_1^j,y), \textrm{ with  } x>\alpha_1^j \textrm{ and } y>\alpha_2^j,$$ and  $(x,\alpha_2^j), (\alpha_1^j,y) \in S$, then it would follow that
	$$ \bs{h}_1 \odot \dots \odot (x,\alpha_2^j) \odot \dots \odot \bs{h}_l\odot k(e_1,e_2)=(\gamma_1,\alpha_2) \notin \overline{\operatorname{Ap}(S)}, $$
	and $\gamma_1>\beta_1$, this is against the maximality of $\beta_1$.
	
	Thus $\bs{h}_i \in I_A(S)$ for all $i$ (and they are clearly distinct from $(\alpha_1,\alpha_2))$.
	
	Now, if $(e_1,e_2) \in I_A(S)$, then $$ (\beta_1,\alpha_2)=k(e_1,e_2) \odot \bs{h}_1 \odot \dots \odot \bs{h}_l $$
	is already the element required to fulfill condition 1. in Definition \ref{rho}.
	
	Thus, let us suppose that $(e_1,e_2)=(\tilde{e_1},e_2) \oplus
	(e_1,\tilde{e_2
})$, where $\tilde{e_1}>e_1$, $\tilde{e_2}>e_2$ and $(\tilde{e_1},e_2), (e_1,\tilde{e_2}) \in I_A(S) \setminus\{ (\alpha_1,\alpha_2)\}$.
Notice that $\bs{\alpha}$ cannot be of the type $(\tilde{e_1},e_2)$ or $(e_1,\tilde{e_2}) $ because in both cases we would have 
$_2\Delta^S(\bs{\alpha}) \subseteq\overline{\operatorname{Ap}(S)}$ against our hypothesis.

First of all notice that $\tilde{e_1} \neq \infty$. In fact, if it were equal to $\infty$, then there would exist $\overline{x}$ such that $(x,e_2)\in S $ for all $x \geq \overline{x}$.
This implies that
$$ k(x,e_2) \odot \bs{h}_1 \odot \dots \odot \bs{h}_l =(kx+\tilde{\alpha_1},\alpha_2) \in S $$
for all $x \geq \overline{x}$. Thus $(\alpha_1,\alpha_2)=(\infty,\alpha_2)$ and this is a contradiction since
$$ (\alpha_1,\alpha_2)=(\infty,\alpha_2)=k(\infty,e_2)\odot \bs{h}_1 \odot \dots \odot \bs{h}_l, $$
is not an element of $I_A(S)$ being reducible (recall that if $\bs{h}_1 \odot \dots \odot \bs{h}_l=\bs{0}$, then $k\geq2$). 
Thus $\tilde{e_1} \neq \infty$, and the element 
$$ (\overline{\alpha}_1,\alpha_2)=k(\tilde{e_1},e_2)\odot \bs{h}_1 \odot \dots \odot \bs{h}_l, $$
is the required element that satisfies the condition 1. of Definition \ref{rho}.

Now we want to show that we can satisfy the condition 2. of $\rho$-reducibility.
Let us suppose that $\bs{\alpha}=(\alpha_1,\alpha_2) \in I_{A_f}(S)$ (all the following considerations can be adapted to the case $(\alpha_1,\alpha_2)=( \infty,\alpha_2)$).

We have to show that for each $\tilde{x} \in  X=\{ x\in \{\beta_1,\ldots,{}_2\delta^{S}(\bs{\alpha})\}: (x,\alpha_2)\in S \}$ we can find $\bs{j}_1,\ldots,\bs{j}_l \in \eta$ such that $\bs{j}_1 \odot \dots \odot \bs{j}_l=(\tilde{x},\beta_2)$  with $\beta_2>\alpha_2$.

Thus, let us consider an arbitrary $\tilde{x} \in X$.
Since $(\tilde{x},\alpha_2),(\alpha_1,\alpha_2) \in S$, by the (G3) property of Definition \ref{E1}, there exists $\beta_2 >\alpha_2 $ such that $(\tilde{x},\beta_2) \in S$. 

Theorem \ref{E7} ensures that we can write
$$(\tilde{x},\beta_2)=\bigoplus_{i=1}^m(\bigodot_{j=1}^n\bs{\gamma}_{j_i}), \bs{\gamma}_{j_i}\in I_A(S). $$
It must exist an index $\overline{i}$ such that
$$\bigodot_{j=1}^n\bs{\gamma}_{j_{\overline{i}}}=(\tilde{x},\tilde{\beta}_2).$$
Notice that $\bs{\gamma}_{j_{\overline{i}}} \in I_A(S) \setminus \{ (\alpha_1,\alpha_2)\}$ for all $j=1,\ldots,n$ (they all have first coordinate less than $\tilde{x}\leq \alpha_1$).
Furthermore $\tilde{\beta}_2 \geq \beta_2>\alpha_2$, thus it is the element which we were looking for in order to satisfy the condition 2. of $\rho$-reducibility.

\end{proof}

As a consequence of Proposition \ref{prop} and Algorithm \ref{algo}, we can immediately deduce the following Corollary.
\begin{cor} \label{cor}
	Let $S$ be a good semigroup.
Then the set $$\eta_S=\{ \bs{\alpha} \in I_A(S):\quad _2\Delta^S(\bs{\alpha}) \subseteq \overline{\operatorname{Ap}(S)}\}$$
is a \emph{sor} for $S$.
	\end{cor}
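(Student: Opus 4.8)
The plan is to read off this statement almost directly from Proposition \ref{prop} together with the proposition asserting that the output of Algorithm \ref{algo} is a \emph{sor}. The essential point is to compare $\eta_S$ with the set $\eta^{*}$ that Algorithm \ref{algo} returns, and then to invoke a simple monotonicity property of \emph{sor}s. First I would record what $\eta^{*}$ is: since at each step $k$ the algorithm tests $\bs{\alpha}^{(k)}$ for $\rho$-reducibility against the \emph{fixed} set $I_A(S)\setminus\{\bs{\alpha}^{(k)}\}$ (not against the current, shrinking $\eta$), the order of processing is immaterial and $\eta^{*}$ is precisely the set of $\bs{\alpha}\in I_A(S)$ that are \emph{not} $\rho$-reducible by $I_A(S)\setminus\{\bs{\alpha}\}$.

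The first key step is the inclusion $\eta^{*}\subseteq\eta_S$. Indeed, if $\bs{\alpha}\notin\eta_S$, i.e. ${}_2\Delta^S(\bs{\alpha})\not\subseteq\overline{\operatorname{Ap}(S)}$, then Proposition \ref{prop} says $\bs{\alpha}$ is $\rho$-reducible by $I_A(S)\setminus\{\bs{\alpha}\}$, so the algorithm discards it and $\bs{\alpha}\notin\eta^{*}$. Contrapositively, every element surviving the algorithm lies in $\eta_S$, giving $\eta^{*}\subseteq\eta_S\subseteq I_A(S)$. Note that I only need the single implication supplied by Proposition \ref{prop}; the reverse inclusion may well fail and is not needed.

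The second key step is a monotonicity lemma: if $\eta$ is a \emph{sor} of $S$ and $\eta\subseteq\eta'\subseteq I_A(S)$, then $\eta'$ is a \emph{sor}. This is immediate from Definition \ref{E11} and the definition of $S_\eta$ as the set of minimal good semigroups containing $\eta$: we have $\eta'\subseteq S$, and if some good semigroup $S'$ satisfied $\eta'\subseteq S'\subsetneq S$, then $\eta\subseteq\eta'\subseteq S'\subsetneq S$ would contradict the minimality of $S$ among good semigroups containing $\eta$; hence $S\in S_{\eta'}$. Applying this with $\eta=\eta^{*}$ (a \emph{sor} by the proposition establishing the correctness of Algorithm \ref{algo}) and $\eta'=\eta_S$ finishes the argument.

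The proof is therefore essentially a bookkeeping deduction, and I do not expect a serious obstacle. The only points requiring care are (i) confirming that the algorithm's removal test is performed against the fixed $I_A(S)\setminus\{\bs{\alpha}^{(k)}\}$, so that $\eta^{*}$ really has the clean description above independently of the processing order, and (ii) phrasing the monotonicity step correctly against the \emph{minimal-element} definition of $S_\eta$ (as opposed to a minimum), since what is being transferred from $\eta$ to its superset $\eta'$ is precisely the impossibility of a strictly smaller good semigroup containing it.
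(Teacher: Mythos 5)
Your proof is correct and is essentially the paper's own argument made explicit: the paper derives the corollary in one line ``as a consequence of Proposition \ref{prop} and Algorithm \ref{algo}'', and your three steps --- noting that the algorithm tests $\rho$-reducibility against the fixed set $I_A(S)\setminus\{\bs{\alpha}^{(k)}\}$ so its output is exactly the elements not $\rho$-reducible by $I_A(S)\setminus\{\bs{\alpha}\}$, applying Proposition \ref{prop} in contrapositive form to get that this output is contained in $\eta_S$, and then passing the \emph{sor} property to the superset $\eta_S$ via the (correct) monotonicity observation about minimal good semigroups containing a set --- are precisely the bookkeeping the paper leaves implicit. No gaps.
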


Now we are ready to give a proof of Theorem \ref{mult}.
\begin{proof}[Proof of Theorem \ref{mult}]
	Using Corollary \ref{cor} and the definition of embedding dimension, it suffices to show that $|\eta_S| \leq e_1+e_2$.
	
	Let us write $\eta_S=\{ \bs{h}^{(1)}=(\alpha_1^{(1)},\alpha_2^{(1)}),\ldots,\bs{h}^{(k)}=(\alpha_1^{(k)},\alpha_2^{(k)})\}$ where  if $i<j$ then $\alpha_2^{(i)}<\alpha_2^{(j)}$ or  $\alpha_2^{(i)}=\alpha_2^{(j)}=\infty$ with  $\alpha_1^{(i)}<\alpha_2^{(j)}$.
	Furthermore we denote by $c=(c_1,c_2)$ the conductor of $S$.
	Now to each element $\bs{h}^{(i)}$ of $\eta_S$ we associate an element $\overline{\bs{h}}^{(i)}$ in the following way:
	\begin{itemize}
		\item \textbf{Case} $\bs{h}^{(i)}=(\alpha_1,\infty)$. Then we set 
		$\overline{\bs{h}}^{(i)}=(\alpha_1,c_2+i).$
		\item  \textbf{Case} $\bs{h}^{(i)}=(\alpha_1,\alpha_2)$, with $\alpha_2 \neq \infty$.
Then we set $\overline{\bs{h}}^{(i)}=\min(_2\Delta^S(\bs{h}^{(i)})\cup \{\bs{h}^{(i)} \}) .$	
\end{itemize}
We consider the set $\eta'=\{ \overline{\bs{h}}^{(1)},\ldots,\overline{\bs{h}}^{(k)}\}$, and we want to show that distinct elements of $\eta'$ belong to distinct levels of the Apéry set of $S$.
In order to do that we consider two arbitrary elements $\overline{\bs{h}}^{(i)}$ and $\overline{\bs{h}}^{(j)}$ of $\eta'$ and we prove that they cannot belong to the same level of the Apéry set.
We have four possible configurations:
\begin{itemize}
	\item \textbf{Case} $\overline{\bs{h}}^{(i)}=(\alpha_1^{(i)},\alpha_2^{(i)})$ and $\overline{\bs{h}}^{(j)}=(\alpha_1^{(j)},\alpha_2^{(j)})$, with
	$\alpha_1^{(i)}<\alpha_1^{(j)}$ and $\alpha_2^{(i)}<\alpha_2^{(j)}$. 
	
	In this case $\overline{\bs{h}}^{(i)}\ll \overline{\bs{h}}^{(j)}$ and from definition of Apéry levels it follows that $\overline{\bs{h}}^{(j)} \in A_n$ and $\overline{\bs{h}}^{(i)} \in A_m$ with $m<n$.

	\item \textbf{Case}  $\overline{\bs{h}}^{(i)}=(\alpha_1^{(i)},\alpha_2^{(i)})$ and $\overline{\bs{h}}^{(j)}=(\alpha_1^{(j)},\alpha_2^{(j)})$, with
	$\alpha_1^{(i)}<\alpha_1^{(j)}$ and $\alpha_2^{(i)}=\alpha_2^{(j)}$. 

This configuration is not possible, because it is against the minimality of the element $\overline{\bs{h}}^{(j)}$ (it is easy to check that this situation cannot involve elements that come from $\bs{h}^{(i)}$ of the type $(\alpha_1,\infty)$).
	\item \textbf{Case}  $\overline{\bs{h}}^{(i)}=(\alpha_1^{(i)},\alpha_2^{(i)})$ and $\overline{\bs{h}}^{(j)}=(\alpha_1^{(j)},\alpha_2^{(j)})$, with
	$\alpha_1^{(i)}<\alpha_1^{(j)}$ and $\alpha_2^{(i)}>\alpha_2^{(j)}$. 

This configuration is not possible, since the element $\overline{\bs{h}}^{(i)} \oplus \overline{\bs{h}}^{(j)} \in S$  is against the minimality of the element $\overline{\bs{h}}^{(j)}$ (it is also easy to check that this situation cannot involve elements that come from $\bs{h}^{(i)}$ of the type $(\alpha_1,\infty)$).
	\item \textbf{Case}  $\overline{\bs{h}}^{(i)}=(\alpha_1^{(i)},\alpha_2^{(i)})$ and $\overline{\bs{h}}^{(j)}=(\alpha_1^{(j)},\alpha_2^{(j)})$, with
	$\alpha_1^{(i)}=\alpha_1^{(j)}$ and $\alpha_2^{(i)}>\alpha_2^{(j)}$. 
Suppose by contradiction that there exists $n \in \mathbb{N}$ such that $\overline{\bs{h}}^{(i)},\overline{\bs{h}}^{(j)}\in A_n$.
From the definition of $\eta_S$ it follows that $\Delta_2^S(\overline{\bs{h}}^{(j)}) \subseteq \overline{\operatorname{Ap}(S)}$.
Thus from Lemma 3.3 (3) of \cite{DAGuMi}, the minimal element $\bs{\beta}$ of $\Delta_2^S(\overline{\bs{h}}^{(j)}) \in A_m$ with $m\leq n$.
On the other hand $\overline{\bs{h}}^{(j)} \leq \bs{\beta}$, thus $\bs{\beta} \in A_l$ with $l\geq n$.
Thus $ \bs{\beta} \in A_n $ and this is a contradiction because we have
$$ \overbrace{\overline{\bs{h}}^{(i)}}^{\in A_n} \oplus \overbrace{\bs{\beta}}^{\in A_n}= \overline{\bs{h}}^{(j)} \in A_n,$$
that is against the definition of Apéry set level.
Since Theorem 3.4 of \cite{DAGuMi}, states that the levels of the Apéry Set are exactly  $e_1+e_2$, it follows that
$$ \operatorname{edim}(S) \leq |\eta_S|=|\eta'| \leq e_1+e_2,$$
and the proof of Theorem \ref{mult} is complete.
\end{itemize}
\end{proof}
We recall that a good semigroup is said to be Arf if  and only if $S(\bs{\alpha})=\{\bs{\beta}\in S|\bs{\beta}\geq\bs{\alpha}\}$ is a semigroup for any $\bs{\alpha}\in S$. In \cite[Proposition 3.19 and Corollary 5.8]{anal:unr} the authors proved that an Arf semigroup can be always seen as the value semigroup of an Arf ring. From this result and Theorem \ref{mult} we can deduce the following corollary.
\begin{cor} \label{arf}
	Let $S$ be an Arf good subsemigroup of $\mathbb{N}^2$. Then, denoted as usual by $\bs{e}=(e_1,e_2)$  the multiplicity vector of $S$,  we have $\operatorname{edim}(S)=e_1+e_2$.
\end{cor}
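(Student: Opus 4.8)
The plan is to prove the two inequalities $\operatorname{edim}(S) \le e_1 + e_2$ and $\operatorname{edim}(S) \ge e_1 + e_2$ separately. The first is nothing but Theorem \ref{mult}, which holds for every good semigroup of $\N^2$ and in particular for the Arf one; so the real content of the corollary is the reverse inequality. To obtain it I would not argue directly on the combinatorics of $S$, but rather pass to the ring side and invoke Theorem \ref{ring}. Concretely, since $S$ is Arf, the results \cite[Proposition 3.19, Corollary 5.8]{anal:unr} quoted above furnish an \emph{Arf} algebroid curve $R$ with $v(R)=S$. Theorem \ref{ring} then gives $\operatorname{edim}(S)\ge\operatorname{edim}(R)$, so it is enough to show $\operatorname{edim}(R)=e_1+e_2$.

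The next step is to identify the multiplicity of the curve $R$ with $e_1+e_2$. Because $S$ is local, a nonzero element of $S$ has both coordinates positive, so each component of the multiplicity vector satisfies $e_i=\min(S_i\setminus\{0\})$, where $S_i$ is the value semigroup of the $i$-th branch of $R$; thus $e_i$ is exactly the multiplicity (the order of the parametrization) of that branch. Since the Hilbert--Samuel multiplicity of a reduced curve singularity is additive over its branches, one gets $e(R)=e_1+e_2$. The only care needed here is the bookkeeping that ties the semigroup data $e_i$ to the analytic branch orders and guarantees that no cancellation phenomenon (of the kind analysed in Examples \ref{exring1} and \ref{exring2}) interferes with the multiplicity computation; the multiplicity, unlike the embedding dimension, is insensitive to such cancellations, which is precisely why the argument closes for multiplicity while Theorem \ref{ring} remains only an inequality in general.

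Finally, I would use the classical fact (Lipman's theory of Arf rings) that a one-dimensional Arf ring attains \emph{maximal embedding dimension}, i.e. $\operatorname{edim}(R)=e(R)$; combined with the previous step this yields $\operatorname{edim}(R)=e_1+e_2$. Putting everything together,
$$e_1+e_2=\operatorname{edim}(R)\le\operatorname{edim}(S)\le e_1+e_2,$$
whence $\operatorname{edim}(S)=e_1+e_2$. I expect the main obstacle to be precisely the middle step: correctly matching the ring-theoretic multiplicity $e(R)$ with the combinatorial quantity $e_1+e_2$, and citing the right form of the maximal-embedding-dimension property of Arf rings. Once these two external facts are in place, Theorems \ref{ring} and \ref{mult} squeeze the embedding dimension from both sides and the equality is immediate.
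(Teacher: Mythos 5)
Your proposal is correct and follows essentially the same route as the paper's proof: Theorem \ref{mult} for the upper bound, an Arf ring $R$ with $v(R)=S$ from \cite[Proposition 3.19, Corollary 5.8]{anal:unr}, Theorem \ref{ring} for $\operatorname{edim}(S)\geq\operatorname{edim}(R)$, and Lipman's result that Arf rings have embedding dimension equal to multiplicity, which the paper also identifies with $e_1+e_2$. Your added care in justifying $e(R)=e_1+e_2$ via additivity of multiplicity over branches is a detail the paper simply asserts, but it does not change the argument.
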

\begin{proof}
	By Theorem  \ref{mult} we have $\operatorname{edim}(S) \leq e_1+e_2$.
	Denote by $R$ an Arf ring such that $v(R)=S$.
	By Theorem \ref{ring} we have $\operatorname{edim}(S) \geq \operatorname{edim}(R)$.
	But $R$ is an Arf ring, thus its embedding dimension is equal to its multiplicity (cf.\cite[Theorem 2.2]{Lipman:stable}). Since the multiplicity of $R$ is also equal to $e_1+e_2$, we have 
	$$ e_1+e_2=\operatorname{edim}(R) \leq \operatorname{edim}(S)\leq e_1+e_2,  $$
and the proof of the corollary is complete.
\end{proof}
We say that a good semigroup $S \subseteq \mathbb{N}^2$ is \emph{maximal embedding dimension } if $\operatorname{edim}(S)=e_1+e_2$. Thus, Arf good semigroups constitute a particular class of maximal embedding dimension semigroups.
It is known that a numerical semigroup is  maximal embedding dimension if and only if $M+M=e+M$ where $M=S \setminus \{0\}$ is its maximal ideal and $e$ is its multiplicity (cf.\cite{max:pedro}).

Thus, we propose the following conjecture.
\begin{conj} \label{cong}
Let $S$ be a good subsemigroup of $\mathbb{N}^2$. Then $S$ is maximal embedding dimension if and only if 
$M+M=\bs{e}+M$, where $\bs{e}$ is its multiplicity vector and $M=S\setminus\{\bs{0}\}$.
\end{conj}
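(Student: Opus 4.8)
The plan is to transport to $\N^2$ the classical argument for numerical semigroups recalled before the statement (\cite{max:pedro}), reading the identity $M+M=\bs{e}+M$ through the Apéry set and the level decomposition of \cite{DAGuMi} that already underlies Theorem \ref{mult}.

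\textbf{Step 1: reformulating the hypothesis.} First I would rewrite $M+M=\bs{e}+M$ purely in terms of irreducibility. Since $\bs{e}\in M$ one always has $\bs{e}+M\subseteq M+M$, so the equality is equivalent to the reverse inclusion $M+M\subseteq\bs{e}+M$. If $\bs{z}=\bs{a}+\bs{b}$ with $\bs{a},\bs{b}\in M$, then $\bs{a},\bs{b}\geq\bs{e}$ and hence $\bs{z}-\bs{e}\geq\bs{e}>\bs{0}$, so $\bs{z}-\bs{e}$ belongs to $M$ as soon as it belongs to $S$. A nonzero $\bs{z}\in S$ is reducible in the sense of Definition \ref{E3} exactly when it admits such a decomposition $\bs{z}=\bs{a}+\bs{b}$, while $\bs{z}\in\operatorname{Ap}(S)$ means precisely $\bs{z}-\bs{e}\notin S$. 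Combining these observations gives the clean equivalence that $M+M=\bs{e}+M$ holds if and only if every element of $\operatorname{Ap}(S)\setminus\{\bs{0}\}$ is irreducible. This is the exact analogue of the numerical characterisation and recasts the hypothesis in a form adapted to the level count.

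\textbf{Step 2: the level bookkeeping.} The inequality $\operatorname{edim}(S)\leq e_1+e_2$ of Theorem \ref{mult} was obtained by exhibiting the \emph{sor} $\eta_S$ of Corollary \ref{cor} and injecting it into the $e_1+e_2$ levels of $\overline{\operatorname{Ap}(S)}$. Hence $\operatorname{edim}(S)=e_1+e_2$ is equivalent to the conjunction of two facts: that this injection is in fact a bijection (so $|\eta_S|=e_1+e_2$ and every level carries exactly one element of $\eta_S$), and that $\eta_S$ realises the minimum over all \emph{sor}. The strategy is to show that both are governed exactly by the irreducibility of the Apéry elements isolated in Step 1. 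For the implication $(\Rightarrow)$ I would argue by contraposition: if $M+M\neq\bs{e}+M$, Step 1 produces a reducible element $\bs{z}\in\operatorname{Ap}(S)\setminus\{\bs{0}\}$, and using the level lemmas of \cite{DAGuMi} together with the fact that $\bs{a},\bs{b}$ already produce $\bs{z}$ by $\odot$, one expects the level of $\bs{z}$ to receive no element of $\eta_S$; then $|\eta_S|<e_1+e_2$ and therefore $\operatorname{edim}(S)\leq|\eta_S|<e_1+e_2$.

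\textbf{Step 3 and the main obstacle.} The hard direction is $(\Leftarrow)$, which requires the lower bound $\operatorname{edim}(S)\geq e_1+e_2$ under the hypothesis. Here the subtlety is intrinsic to the definition of embedding dimension: a \emph{sor} $\eta$ need not generate $S$ as a semiring, it only needs $S$ to be minimal among the good semigroups containing $\eta$, so an irreducible absolute may be \emph{forced} into $S$ through property (G3) rather than appearing in $\eta$. Consequently one cannot simply declare each Apéry generator essential; one must show instead that, under the irreducibility hypothesis, dropping the representative of any level produces (as in Lemma \ref{semcont}) a strictly smaller good semigroup still containing the remaining elements, so that no \emph{sor} can be shorter than $e_1+e_2$. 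Making this precise means controlling the reducibility of Definition \ref{defind} and the $\rho$-reducibility of Definition \ref{rho} level by level, and reconciling the monoid notion of reducibility used in Step 1 with these semiring notions — a gap that the example \ref{dmsor} (distinct \emph{msor} cardinalities) and the strict inequalities around \ref{infb} show to be genuine. The infinite absolutes, where the relevant level is read off from the conductor, will require separate but analogous handling. I expect this lower bound to be the crux of the whole proof; the numerical case sidesteps it entirely because there generation is with respect to a single operation and every minimal generator is automatically forced.
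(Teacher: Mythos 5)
Before anything else, note what you are up against: this statement is Conjecture \ref{cong}, and the paper contains \emph{no proof of it}. The authors only assert, without writing down the argument, that they can prove the implication from $M+M=\bs{e}+M$ to $\operatorname{edim}(S)=e_1+e_2$ (your direction $(\Leftarrow)$), and they leave the converse completely open. So there is no proof in the paper to compare with; your proposal has to stand on its own, and it does not, because it is a strategy with two unproven cores rather than a proof. The part that does work is Step 1: since $\bs{e}$ is the least element of $M$ and every element of $M$ dominates $\bs{e}$ coordinatewise, $M+M=\bs{e}+M$ is equivalent to $(M+M)\cap\operatorname{Ap}(S)=\emptyset$, i.e.\ to every element of $\operatorname{Ap}(S)\setminus\{\bs{0}\}$ being irreducible in the sense of Definition \ref{E3} (a finite element cannot decompose using infinite ones, so the monoid and semiring notions of reducibility agree here). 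This reformulation is correct and is the natural starting point.

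The genuine gaps are Steps 2 and 3, i.e.\ both implications. In Step 2 the whole argument hangs on the phrase ``one expects the level of $\bs{z}$ to receive no element of $\eta_S$'', which is never justified. What the pigeonhole actually requires is that \emph{some} level of $\overline{\operatorname{Ap}(S)}$ be missed by the injection $\bs{h}^{(i)}\mapsto\overline{\bs{h}}^{(i)}$ built in the proof of Theorem \ref{mult}, and there is no reason the missed level should be the level of your reducible element $\bs{z}$: a level can contain many elements of the Apéry set, and the representative landing in the level of $\bs{z}$ may come from an irreducible absolute having nothing to do with $\bs{z}$. Nothing in the level lemmas of \cite{DAGuMi} rules this out, and this is precisely the direction that remains open even to the authors; an appeal to expectation cannot replace the missing argument. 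In Step 3 you diagnose the obstacle correctly (a \emph{sor} is not a semiring generating set, so one cannot simply declare the Apéry representatives essential: an irreducible absolute can be forced into $S$ through (G3) without lying in $\eta$), but you then explicitly defer its resolution (``I expect this lower bound to be the crux''); proposing to remove tracks as in Lemma \ref{semcont} level by level is a direction of attack, not an argument. Note finally that your assessment of difficulty is inverted relative to the paper: the direction you wave through in Step 2 is the genuinely open half of the conjecture, while the lower bound you call the crux is the half the authors claim (without exhibiting a proof) to possess.
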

At the moment we have tested Conjecture \ref{cong} for a large number of good semigroup, and we have a proof of the fact that $M+M=\bs{e}+M$ implies $\operatorname{edim}(S)=e_1+e_2.$
 \begin{acknowledgements}
	The authors would like to thank Marco D'Anna and Pedro A. García Sánchez for their helpful comments and suggestions during the development of this paper. They also thank the "INdAM" and the organizers of the "International meeting on numerical semigroups (Cortona 2018)" for inviting them to attend the conference, which was of great help in developing some of the ideas in this paper. Finally, a special thank goes to the anonymous referee for the interesting and extensive comments on an earlier version of this paper.
\end{acknowledgements}

\end{document}